\newcommand{\R}{\mathbb{R}}
\newcommand{\CX}{\mathcal{X}}
\newcommand{\CP}{\mathcal{P}}
\newcommand{\norm}[1]{\left\lVert#1\right\rVert}
\newcommand{\abs}[1]{\left |#1\right |}
\newtheorem{theorem}{Theorem}
\newtheorem{assumption}{Assumption}
\newtheorem{definition}{Definition}
\newtheorem{lemma}{Lemma}
\newtheorem{result}{Result}
\newtheorem{remark}{Remark}
\newtheorem{example}{Example}
\newcommand*\diff{\mathop{}\!\mathrm{d}}
\title{Sample Complexity of Probability Divergences under Group Symmetry}
\author{  Ziyu Chen\\
  Department of Mathematics and Statistics\\
  University of Massachusetts Amherst\\
  Amherst, MA 01003,  USA \\
  \texttt{ziyuchen@umass.edu} \\
  %% examples of more authors
  \And
    Markos A. Katsoulakis\\
    Department of Mathematics and Statistics\\
  University of Massachusetts Amherst\\
  Amherst, MA 01003,  USA \\
  \texttt{markos@umass.edu} \\
\And
    Luc Rey-Bellet\\
    Department of Mathematics and Statistics\\
  University of Massachusetts Amherst\\
  Amherst, MA 01003,  USA \\
  \texttt{luc@umass.edu} 
  \And
    Wei Zhu\\
    Department of Mathematics and Statistics\\
  University of Massachusetts Amherst\\
  Amherst, MA 01003,  USA \\
  \texttt{weizhu@umass.edu} 
}
\date{}
\begin{document}
\maketitle

\begin{abstract}
We rigorously quantify the improvement in the sample complexity of variational divergence estimations for group-invariant distributions. In the cases of the Wasserstein-1 metric and the Lipschitz-regularized $\alpha$-divergences, the reduction of sample complexity is proportional to the group size if the group is finite. In addition to the published version at ICML 2023, our proof indeed has included the case when the group is infinite such as compact Lie groups, the convergence rate can be further improved and depends on the intrinsic dimension of the fundamental domain characterized by the scaling of its covering number. Our approach is different from that in [Tahmasebi \& Jegelka, ICML 2024] and our work also applies to asymmetric divergences, such as the Lipschitz-regularized $\alpha$-divergences. For the maximum mean discrepancy (MMD), the improvement of sample complexity is more nuanced, as it depends on not only the group size but also the choice of kernel. Numerical simulations verify our theories.
\end{abstract}

\section{Introduction}
Probability divergences provide means to measure the discrepancy between two probability distributions. They have broad applications in a variety of inference tasks, such as independence testing \cite{zhang2018large, kinney2014equitability}, independent component analysis \cite{hyvarinen2002independent}, and generative modeling \cite{goodfellow2014generative,nowozin2016f, arjovsky2017wasserstein, NIPS2017_892c3b1c, tolstikhin2018wasserstein, nietert2021smooth}.

A key task within the above applications is the computation and estimation of the divergences from finite data, which is known to be a difficult problem \cite{paninski2003estimation, gao2015efficient}. Empirical estimators based on the variational representations for the probability divergences are generally favored and widely used due to their scalability to both the data size and the ambient space dimension \cite{MINE_paper,birrell2022optimizing, nguyen2007nonparametric, nguyen2010estimating, Ruderman, sreekumar2022neural,birrell2021variational,birrell2022function,sriperumbudur2012empirical, gretton2006kernel, gretton2007kernel, gretton2012kernel, genevay2019sample}.

\begin{figure}[t]
    \centering
    \includegraphics[width=.6\columnwidth]{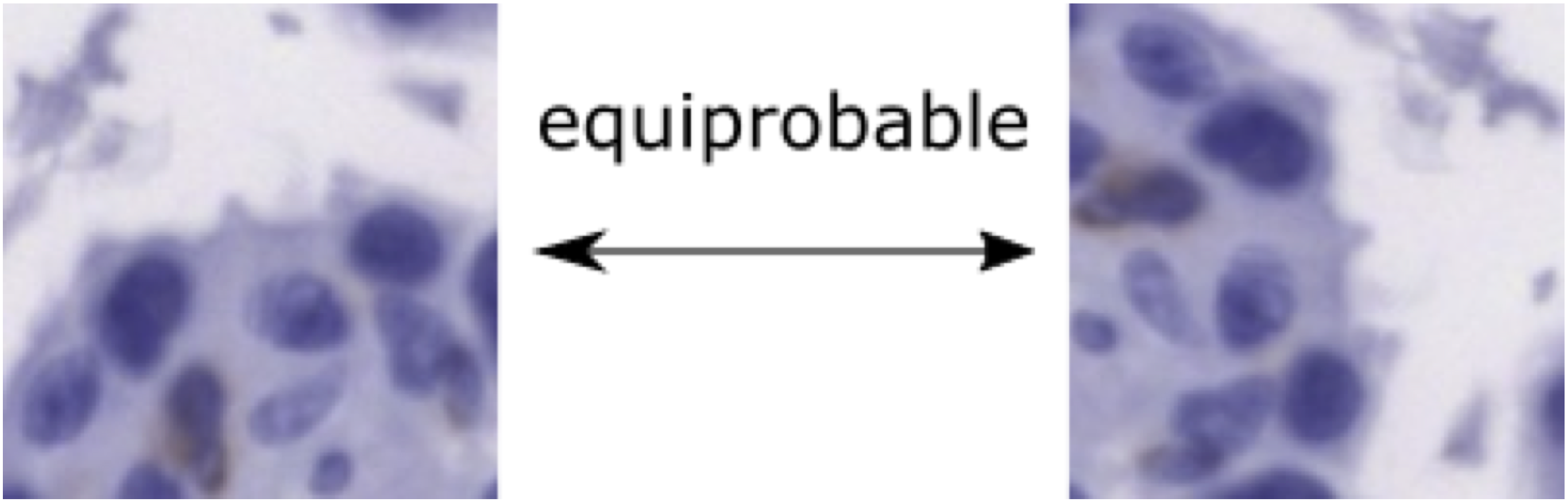}
    \caption{The distribution of the whole-slide prostate cancer images (LYSTO data set \cite{francesco_ciompi_2019_3513571}) is \textit{rotation-invariant}, i.e., an image and its rotated copies are equiprobable.}
    \label{fig:equiprobable}
\end{figure}

Empirical computation of the probability divergences and theoretical analysis on their sample complexity are typically studied without any a priori \textit{structural assumption} on the probability measures. Many distributions in real life, however, are known to have intrinsic structures, such as \textit{group symmetry}. For example, the distribution of the medical images collected without preferred orientation should be \textit{rotation-invariant}, i.e., an image is supposed to have the same likelihood as its rotated copies; see Figure~\ref{fig:equiprobable}. Such structural information could be leveraged to improve the accuracy and/or sample-efficiency for divergence estimation.

Indeed, the recent work by Birrell et al. \cite{birrell2022structure} shows that one can develop an improved variational representation for divergences between group-invariant distributions. The key idea is to reduce the test function space in the variational formula to its subset of group-invariant functions, which effectively acts as an unbiased regularization. When used in a generative adversarial network (GAN) for group-invariant distribution learning, Birrell et al. \cite{birrell2022structure} empirically show that divergence estimation/optimization based on their proposed variational representation under group symmetry leads to significantly improved sample generation, especially in the small data regime.

The purpose of this work is to rigorously quantify the performance gain of divergence estimation under group symmetry. More specifically, we analyze the reduction in sample complexity of divergence estimation in terms of the convergence of the empirical estimation. We focus, in particular, on three types of probability divergences: the Wasserstein-1 metric, the maximum mean discrepancy (MMD), and the family of Lipschitz-regularized $\alpha$-divergences; see \cref{sec:background_variational} for the exact definition. Our main results show that the reduction of sample complexity is proportional to the group size if the
group is \textit{finite}. When the group is \textit{infinite}, the convergence rate can be further improved and depends
on the intrinsic dimension of the fundamental domain characterized by the scaling of its covering
number defined in \cref{def:intrinsic_dimension}; see \cref{thm:gammaIPM_main} and \cref{thm:gammaIPM_main_infinite_group} for the Wasserstein-1 metric; \cref{thm:f-gamma_main} and \cref{thm:f-gamma_main_infinite_group} for the Lipschitz-regularized $\alpha$-divergences respectively. In the case of MMD, the reduction in sample complexity due to the group invariance is more nuanced and depends on the properties of the kernel; see Theorem~\ref{thm:mmd_main}. As a byproduct, we also establish the consistency and sample complexity for the Lipschitz-regularized $\alpha$-divergences without group symmetry, which, to the best of our knowledge, is missing in the previous literature.

The rest of the paper is organized as follows. In Section \ref{sec:related}, we review previous work related to the empirical estimation of probability divergences and group-invariant distributions. We introduce the background and the motivation in Section \ref{sec:background}. Theoretical results and numerical examples are provided in Section \ref{sec:sample_complexity}. We conclude our work in Section \ref{sec:conclusion}. Proofs and detailed statement of the theorems in Section \ref{sec:sample_complexity} are provided in Section \ref{appendix:proofs}.

\section{Related work}\label{sec:related}
\textbf{Empirical estimation of probability divergences.}
Probability divergences have been widely used, including in generative adversarial networks (GANs) \cite{arjovsky2017wasserstein,goodfellow2014generative,nowozin2016f,birrell2022structure,NIPS2017_892c3b1c},
%divergence estimation \cite{birrell2021variational,nguyen2007nonparametric,nguyen2010estimating,Ruderman}, 
uncertainty quantification \cite{chowdhary_dupuis_2013, DKPP}, independence determination through mutual information estimation \cite{MINE_paper}, bounding risk in probably approximately correct (PAC) learning \cite{catoni2008pac,10.1145/307400.307435,10.1145/267460.267466}, statistical mechanics and interacting particles \cite{Kipnis:99}, large deviations \cite{dupuis2011weak}, and parameter estimation \cite{Broniatowski_Keziou}.

A growing body of literature has been dedicated to the empirical estimation of divergences from finite data. Earlier works based on density estimation are known to work best for low dimensions \cite{kandasamy2015nonparametric,poczos2011nonparametric}. Recent research has shown that statistical estimators based on the variational representations of probability divergences scale better with dimensions; such studies include the KL-divergences \cite{MINE_paper}, the more general $f$-divergences \cite{birrell2022optimizing, nguyen2007nonparametric, nguyen2010estimating, Ruderman, sreekumar2022neural}, R\'enyi divergences \cite{birrell2021variational, birrell2022function}, integral probability metrics (IPMs) \cite{sriperumbudur2012empirical, gretton2006kernel, gretton2007kernel, gretton2012kernel}, and Sinkhorn divergences \cite{genevay2019sample}. Such estimators are typically constructed to compare an \textit{arbitrary} pair of probability measures without any a priori structural assumption, and are hence sub-optimal in estimating divergences between distributions with known structures, such as group symmetry.

\textbf{Group-invariant distributions.} Recent development in group-equivariant machine learning \cite{pmlr-v48-cohenc16, NEURIPS2019_b9cfe8b6, NEURIPS2019_45d6637b} has sparked a flurry of research in neural generative models for group-invariant distributions. Most of the works focus only on the guaranteed \textit{generation}, through, e.g., an equivariant normalizing-flow, of the group-invariant distributions \cite{bilovs2021scalable, boyda2021sampling, garcia2021n, kohler2019equivariant, liu2019graph, rezende2019equivariant}; the divergence computation between the generated distribution and the ground-truth target, a crucial step in the optimization pipeline, however, does not leverage their group-invariant structure. Equivariant GANs for group-invariant distribution learning have also been proposed by modifying the inner loop of discriminator update through either data-augmentation \cite{zhao2020differentiable} or constrained optimization within a subspace of group-invariant discriminators \cite{EquivariantGAN}; the theoretical justification of such procedures, as well as the resulting performance gain, have been explained by Birrell et al. \cite{birrell2022structure} as an improved estimation of variational divergences under group symmetry via an unbiased regularization. The exact \textit{quantification} of the improvement, in terms of reduction in sample complexity, is however still missing; this is the main focus of this work.

\section{Background and motivation}
\label{sec:background}
\subsection{Variational divergences and probability metrics}
\label{sec:background_variational}
Let $\mathcal{X}$ be a measurable space, and $\CP(\CX)$ be the set of probability measures on $\CX$. A map $D:\CP(\CX)\times \CP(\CX) \to [0, \infty]$ is called a \textit{divergence} on $\CP(\CX)$ if
\begin{align}
    D(P, Q) = 0 \iff P = Q \in \CP(\CX),
\end{align}
hence providing a notion of ``distance" between probability measures. Many probability divergences of interest can be formulated using a variational representation
\begin{align}
\label{eq:variational_divergence}
    D(P, Q) = \sup_{\gamma\in\Gamma}H(\gamma; P, Q),
\end{align}
where $\Gamma\subset \mathcal{M}(\CX)$ is a space of test functions, $\mathcal{M}(\CX)$ is the set of measurable functions on $\CX$, and $H:\mathcal{M}(\CX)\times\mathcal{P}(\CX)\times \CP(\CX)\to [-\infty, \infty]$ is some objective functional. Through  suitable choices of $H(\gamma; P, Q)$ and $\Gamma$, formula~\eqref{eq:variational_divergence} includes many divergences and probability metrics. Below we list two specific classes of examples.

\textbf{(a) $\Gamma$-Integral Probability Metrics ($\Gamma$-IPMs).} Given $\Gamma\subset \mathcal{M}_b(\CX)$, the space of bounded measurable functions on $\CX$, the $\Gamma$-IPM between $P$ and $Q$ is defined as
\begin{align}
    \label{eq:IPM}
    D^\Gamma(P, Q) \coloneqq \sup_{\gamma\in\Gamma}\left\{E_P[\gamma] - E_Q[\gamma]\right\}.
\end{align}
Some prominent examples of the $\Gamma$-IPMs include the Wasserstein-1 metric, the total variation metric, the Dudley metric, and the maximum mean discrepancy (MMD) \cite{muller1997integral,sriperumbudur2012empirical}. Our work, in particular, focuses on the following two specific IPMs.
\begin{itemize}
    \item The Wasserstein-1 metric, $W(P, Q)\coloneqq D^{\text{Lip}_L(\CX)}(P, Q)$, i.e., 
    \begin{align}
        \label{eq:Wasserstein}
        W(P, Q) \coloneqq \sup_{\gamma\in\text{Lip}_L(\CX)}\{E_P[\gamma]- E_Q[\gamma]\},
    \end{align}
    where $\text{Lip}_L(\CX)$ is the space of $L$-Lipschitz functions on $\mathcal{\CX}$. We note that the normalizing factor $L^{-1}$ has been omitted from the formula.
    \item The \textit{maximum mean discrepancy}, $\text{MMD}(P, Q)\coloneqq D^{B_{\mathcal{H}}}(P, Q)$, i.e.,
    \begin{align}
        \label{eq:MMD}
        \text{MMD}(P, Q) \coloneqq \sup_{\gamma\in B_{\mathcal{H}}}\{E_P[\gamma]- E_Q[\gamma]\},
    \end{align}
    where $B_\mathcal{H}$ is the unit ball of some reproducing kernel Hilbert space (RKHS) $\mathcal{H}$ on $\mathcal{X}$.    
\end{itemize}

\textbf{(b) $(f, \Gamma)$-divergences.} Let $f : [0, \infty) \to \R$ be convex and lower semi-continuous, with $f(1)=0$ and $f$ strictly convex at $x = 1$. Given $\Gamma\subset \mathcal{M}_b(\CX)$ that is closed under the shift transformations $\gamma \mapsto \gamma +\nu, \nu\in\R$, the $(f, \Gamma)$-divergence introduced by Birrell et al. \cite{birrell2020f} is defined as
\begin{align}
\label{eq:f_gamma_divergence}
    D_{f}^{\Gamma}(P\| Q) = \sup_{\gamma\in\Gamma}\{E_{P}[\gamma]-E_Q[f^*(\gamma)]\},
\end{align}
where $f^*$ denotes the Legendre transform of $f$. Formula~\eqref{eq:f_gamma_divergence} includes, as a special case when $\Gamma = \mathcal{M}_b(\CX)$, the widely known class of $f$-divergences, with notable examples such as the Kullback-Leibler (KL) divergence \cite{kullback1951information}, the total variation distance, the Jensen-Shannon divergence, the $\chi^2$-divergence, the Hellinger distance, and more generally the family of $\alpha$-divergences \cite{nowozin2016f}. Of particular interest to us is the class of the \textit{Lipschitz-regularized} $\alpha$-divergences, 
\begin{align}
\label{eq:alpha_divergence}
    D_{f_\alpha}^\Gamma (P\| Q), ~~\Gamma = \text{Lip}_{L}(\mathcal{X}), ~f_\alpha(x)=\frac{x^\alpha-1}{\alpha(\alpha-1)},
\end{align}
where $\alpha > 0$ and $\alpha\neq 1$ is a positive parameter.

An important observation that will be useful in one of our results,  Theorem~\ref{thm:f-gamma_main}, is that $D_{f_\alpha}^\Gamma$ admits an equivalent representation, which writes
\begin{equation}\label{eq:falphagammanew}
    D_{f_\alpha}^{\Gamma}(P\| Q) = \sup_{\gamma\in\Gamma,\nu\in\mathbb{R}}\{E_{P}[\gamma+\nu]-E_{Q}[f_\alpha^*(\gamma+\nu)]\}
\end{equation}
due to the invariance of $\Gamma=\text{Lip}_{L}(\mathcal{X})$ under the shift map $\gamma\mapsto\gamma+\nu$ for $\nu\in\mathbb{R}$.
\subsection{Empirical estimation of variational divergences}

Given {i.i.d.} samples $X = \{x_1, x_2, \cdots, x_m\}$ and $Y = \{y_1, y_2, \cdots, y_n\}$, respectively, from two unknown probability measures $P, Q\in\CP(\CX)$, it is often of interest---in applications such as two-sample testing \cite{bickel1969distribution, gretton2006kernel,gretton2012kernel, cheng2021kernel} and independence testing \cite{gretton2007kernel, gretton2012kernel, zhang2018large, kinney2014equitability}---to estimate the divergence between $P$ and $Q$ \cite{sriperumbudur2012empirical,birrell2021variational,nguyen2007nonparametric,nguyen2010estimating}. For variational divergences $D^\Gamma(P, Q)$ and $D^\Gamma_f(P\|Q)$ in the form of \eqref{eq:IPM} and \eqref{eq:f_gamma_divergence}, their empirical estimators can naturally be given by
\begin{align}
\label{eq:estimator_general_ipm}
  & D^\Gamma(P_m, Q_n) = \sup_{\gamma\in\Gamma}\left\{\sum_{i=1}^m \frac{\gamma(x_i)}{m} - \sum_{i=1}^n\frac{\gamma(y_i)}{n}\right\},\\ \label{eq:estimator_general_f_gamma}
  & D^\Gamma_f(P_m\|Q_n)= \sup_{\gamma\in\Gamma}\left\{\sum_{i=1}^m \frac{\gamma(x_i)}{m} - \sum_{i=1}^n\frac{f^*(\gamma(y_i))}{n}\right\}
\end{align}
where $P_m = \frac{1}{m}\sum_{i=1}^m\delta_{x_i}$ and $Q_n = \frac{1}{n}\sum_{j=1}^n\delta_{y_j}$ represent the empirical distributions of $P$ and $Q$, respectively.

The consistency and sample complexity of the empirical estimators $W(P_m, Q_n)$ and $\text{MMD}(P_m, Q_n)$ in the form of \eqref{eq:estimator_general_ipm}
for, respectively, the Wasserstein-1 metric \eqref{eq:Wasserstein} and MMD \eqref{eq:MMD} between two \textit{general distributions} $P, Q\in\CP(\CX)$ have been well studied \cite{sriperumbudur2012empirical,gretton2012kernel}. However,  for probability measures with special structures, such as \textit{group symmetry}, one can potentially obtain a divergence estimator with substantially improved sample complexity as empirically observed by Birrell et al. \cite{birrell2022structure}. We provide, in the following section, a brief review of group-invariant distributions and the improved variational representations for probability  divergences under group symmetry, which serves as a motivation and foundation for our theoretical analysis in \cref{sec:sample_complexity}.

\subsection{Variational divergences under group symmetry}

A \textit{group} is a set $\Sigma$ equipped with a group product satisfying the axioms of associativity, identity, and invertibility. Given a group $\Sigma$ and a set $\CX$, a map $\theta:\Sigma \times \CX\to\CX$ is called a \textit{group action on $\CX$} if $\theta_\sigma\coloneqq \theta(\sigma, \cdot): \CX\to\CX$ is an automorphism on $\CX$ for all $\sigma\in\Sigma$, and $\theta_{\sigma_2}\circ \theta_{\sigma_1} = \theta_{\sigma_2\cdot \sigma_1}$, $\forall \sigma_1, \sigma_2\in\Sigma$. By convention, we will abbreviate $\theta(\sigma, x)$ as $\sigma x$ throughout the paper.

A function $\gamma:\CX\to \R$ is called \textit{$\Sigma$-invariant} if $\gamma\circ \theta_\sigma = \gamma, \forall \sigma\in\Sigma$. Let $\Gamma$ be a set of measurable functions $\gamma:\mathcal{X}\to\mathbb{R}$; its subset, $\Gamma_{\Sigma}$, of $\Sigma$-invariant functions is defined as
\begin{equation}
\label{eq:invariant_function_space}
    \Gamma_{\Sigma} \coloneqq \{\gamma\in\Gamma:\gamma\circ \theta_\sigma=\gamma,\forall \sigma\in\Sigma\}.
\end{equation}
On the other hand, a probability measure $P\in \CP(\CX)$ is called \textit{$\Sigma$-invariant} if $P = (\theta_\sigma)_* P, \forall \sigma\in \Sigma$, where $(\theta_\sigma)_* P\coloneqq P\circ (\theta_\sigma)^{-1}$ is the push-forward measure of $P$ under $\theta_\sigma$. We denote the set of all $\Sigma$-invariant distributions on $\CX$ as $\CP_\Sigma(\CX)\coloneqq \{P\in\CP(\CX): P ~\text{is}~ \Sigma\text{-invariant}\}$.

Finally, for a compact Hausdorff topological group $\Sigma$ \cite{folland1999real}, we define two \textit{symmetrization operators}, $S_\Sigma:\mathcal{M}_b(\CX)\to\mathcal{M}_b(\CX)$ and $S^\Sigma:\CP(\CX)\to\CP(\CX)$, on functions and probability measures, respectively, as follows
\begin{align}
\label{eq:symmetrization_function}
    &S_\Sigma[\gamma](x)  \coloneqq \int_\Sigma \gamma(\sigma x)\mu_\Sigma(d\sigma),~\forall \gamma\in\mathcal{M}_b(\CX)\\
\label{eq:symmetrization_measure}
    &E_{S^\Sigma[P]} \gamma \coloneqq E_PS_\Sigma[\gamma], ~\forall P\in\CP(\CX), \forall \gamma \in\mathcal{M}_b(\CX)
\end{align}
where $\mu_\Sigma$ is the unique Haar probability measure on $\Sigma$. The operators $S_\Sigma[\gamma]$ and $S^\Sigma[P]$ can be intuitively understood, respectively,  as ``averaging" the function $\gamma$ or ``spreading" the probability mass $P$ across the group orbits in $\CX$; one can easily verify that they are \textit{projection operators} onto the corresponding invariant subsets $\Gamma_\Sigma\subset \Gamma$ and $\CP_\Sigma(\CX)\subset \CP(\CX)$ \cite{birrell2022structure}.
%\ZC{Add the symmetrization $S^{\Sigma}[P])$ of a probability measure $P$ as well as the theorem like $D^\Sigma(P,Q) = D(S^{\Sigma}[P], S^{\Sigma}[Q])$.}

The main result by Birrel et al. \cite{birrell2022structure}, which we summarize in Result~\ref{thm:sp-gan-main-result}, is that for $\Sigma$-invariant distributions, the function space $\Gamma$ in the variational formulae \eqref{eq:IPM} and \eqref{eq:f_gamma_divergence} can be reduced to its invariant subset $\Gamma_\Sigma\subset\Gamma$.
\begin{result}[paraphrased from \cite{birrell2022structure}]
\label{thm:sp-gan-main-result}
    If $S_\Sigma[\Gamma]\subset \Gamma$ and $P, Q\in\CP(X)$, then
    \begin{align}
        & D^\Gamma(S^\Sigma[P], S^\Sigma[Q]) = D^{\Gamma_\Sigma}(P, Q),\\
        & D_f^\Gamma(S^\Sigma[P]\| S^\Sigma[Q]) = D^{\Gamma_\Sigma}_f(P\| Q), 
    \end{align}
    where $D^\Gamma(P, Q)$ and $D_f^\Gamma(P\|Q)$ are given by \eqref{eq:IPM} and \eqref{eq:f_gamma_divergence}. In particular, if $P, Q\in\CP_\Sigma(\CX)$ are $\Sigma$-invariant, then
    \begin{align*}
        D^\Gamma(P, Q) = D^{\Gamma_\Sigma}(P, Q), \quad D_f^\Gamma(P\| Q) = D^{\Gamma_\Sigma}_f(P\| Q).
    \end{align*}
\end{result}
Result~\ref{thm:sp-gan-main-result} motivates a potentially more sample-efficient way to estimate the divergences $D^\Gamma(P, Q)$ and $D_f^\Gamma(P\| Q)$ between $\Sigma$-invariant distributions $P, Q\in\CP(\CX)$ using
\begin{align}
\label{eq:estimator_invariant_ipm}
  & D^{\Gamma_\Sigma}(P_m, Q_n) = \sup_{\gamma\in\Gamma_\Sigma}\left\{\sum_{i=1}^m \frac{\gamma(x_i)}{m} - \sum_{i=1}^n\frac{\gamma(y_i)}{n}\right\},\\ \label{eq:estimator_invariant_f_gamma}
  & D^{\Gamma_\Sigma}_f(P_m\|Q_n)= \sup_{\gamma\in\Gamma_\Sigma}\left\{\sum_{i=1}^m \frac{\gamma(x_i)}{m} - \sum_{i=1}^n\frac{f^*(\gamma(y_i))}{n}\right\}.
\end{align}
Compared to Eq.~\eqref{eq:estimator_general_ipm} and \eqref{eq:estimator_general_f_gamma}, the estimators \eqref{eq:estimator_invariant_ipm} and \eqref{eq:estimator_invariant_f_gamma} have the benefit of optimizing over a reduced space $\Gamma_\Sigma\subset \Gamma$ of test functions, effectively acting as an \textit{unbiased regularization}, and their efficacy has been empirically observed by Birrell et al. \cite{birrell2022structure} in neural generation of group-invariant distributions with substantially improved data-efficiency. However, the theoretical understanding of the performance gain is still lacking.

The purpose of this work is to rigorously quantify the improvement in sample complexity of the divergence estimations \eqref{eq:estimator_invariant_ipm} and \eqref{eq:estimator_invariant_f_gamma} for group-invariant distributions. To contextualize the idea, we will focus our analysis on three specific types of probability divergences, the Wasserstein-1 metric \eqref{eq:Wasserstein}, the MMD \eqref{eq:MMD}, and the Lipschitz-regularized $\alpha$ divergence \eqref{eq:f_gamma_divergence}\eqref{eq:alpha_divergence} between $\Sigma$-invariant $P, Q\in\CP_\Sigma(\CX)$,
\begin{align}
    \label{eq:estimator_wasserstein_invariant}
    &W(P, Q) = W^\Sigma(P, Q) \approx W^\Sigma(P_m, Q_n),\\
    \label{eq:estimator_mmd_invariant}
    &\text{MMD}(P, Q) = \text{MMD}^\Sigma(P, Q) \approx \text{MMD}^\Sigma(P_m, Q_n)\\
    \label{eq:estimator_alpah_invariant}
    &D^\Gamma_{f_\alpha}(P\|Q) = D_{f_\alpha}^{\Gamma_\Sigma}(P\|Q)\approx D^{\Gamma_\Sigma}_{f_\alpha}(P_m\|Q_n),
\end{align}
where
\begin{align}
    & W^\Sigma(P, Q) \coloneqq D^{[\text{Lip}_L(\CX)]_\Sigma}(P, Q),\\
    & \text{MMD}^\Sigma(P, Q)\coloneqq D^{[B_{\mathcal{H}}]_\Sigma}(P, Q),
\end{align}
and the definition of $D_{f_\alpha}^{\Gamma_\Sigma}(P\|Q)$ is given by Equations~\eqref{eq:f_gamma_divergence}, \eqref{eq:alpha_divergence} and \eqref{eq:invariant_function_space}.

\subsection{Further notations and assumptions}
\label{sec:assumptions_notations}
For the rest of the paper, we assume the measurable space $\CX\subset \R^D$ is a bounded subset of $\R^D$ equipped with the Euclidean metric $\|\cdot\|_2$. The group is denoted by $\Sigma$. The Haar measure $\mu_\Sigma$ is thus a uniform probability measure over $\Sigma$, and the symmetrization $S_\Sigma[\gamma]$ [Eq.~\eqref{eq:symmetrization_function}] is an average of $\gamma$ over the group orbit. We next introduce the concept of \textit{fundamental domain} in the following definition.
\begin{definition}
    \label{def:fundamental_domain}
    A subset $\CX_0\subset \CX$ is called a \textit{fundamental domain} of $\CX$ under the $\Sigma$-action if for each $x\in\CX$, there exists a unique $x_0\in\CX$ such that $x = \sigma x_0$ for some $\sigma\in\Sigma$.
\end{definition}
\begin{figure}[ht]	  
    %\vskip 0.2in
    \begin{center}
\centerline{    \includegraphics[width=.4\columnwidth]{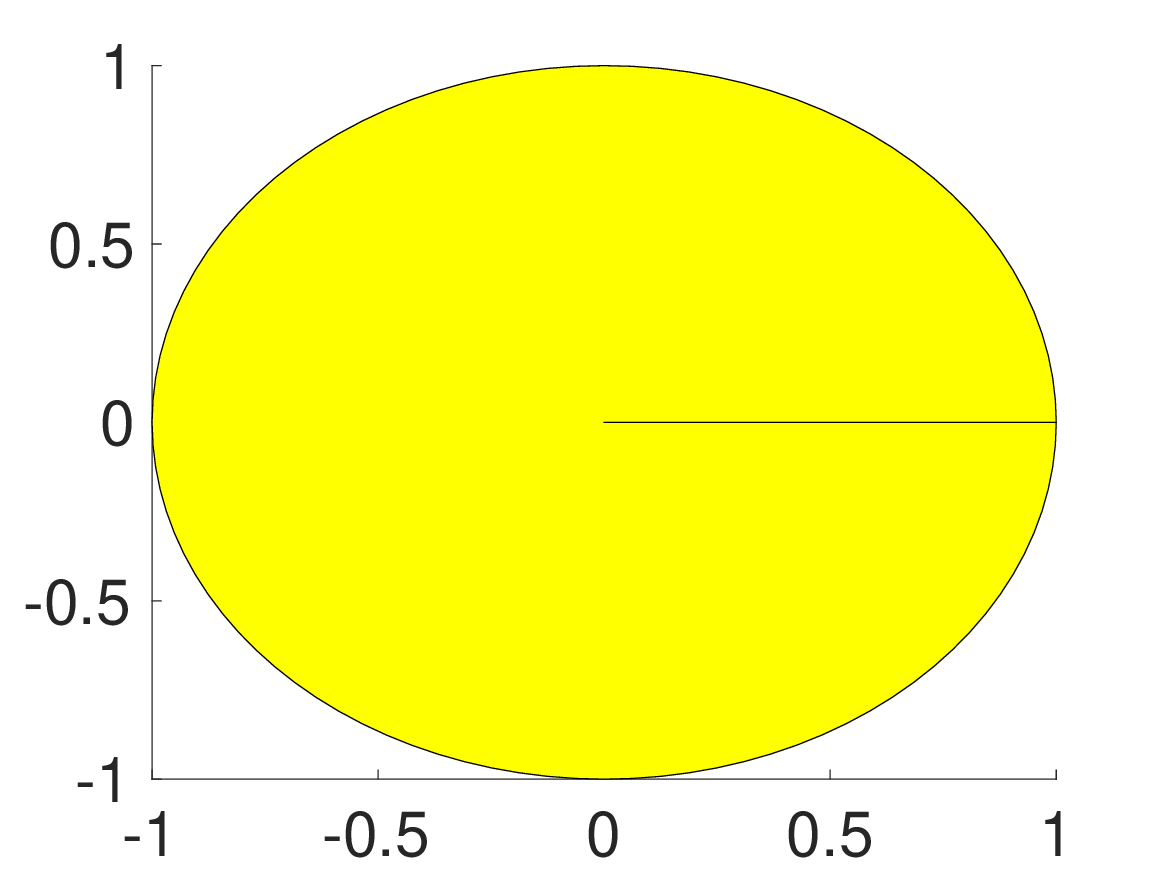}
    \includegraphics[width=.4\columnwidth]{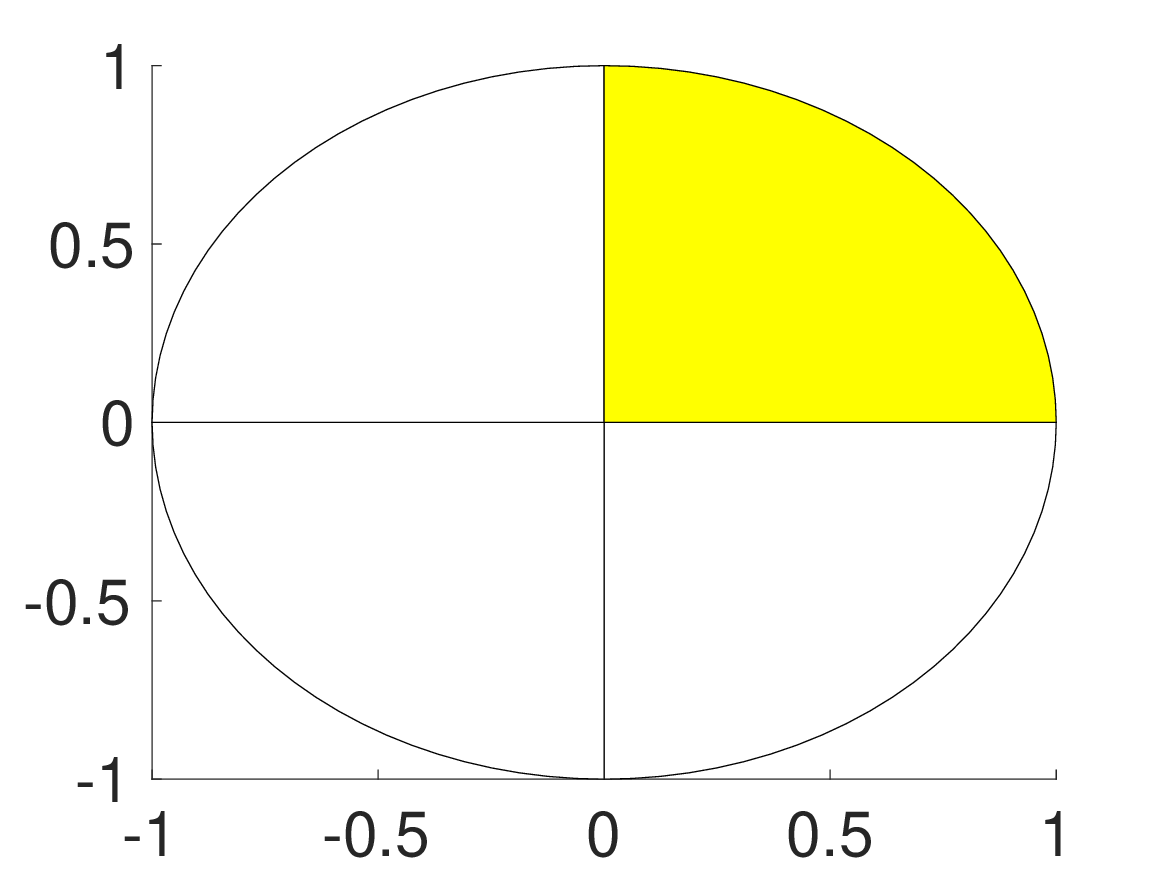}}
\centerline{\includegraphics[width=.4\columnwidth]{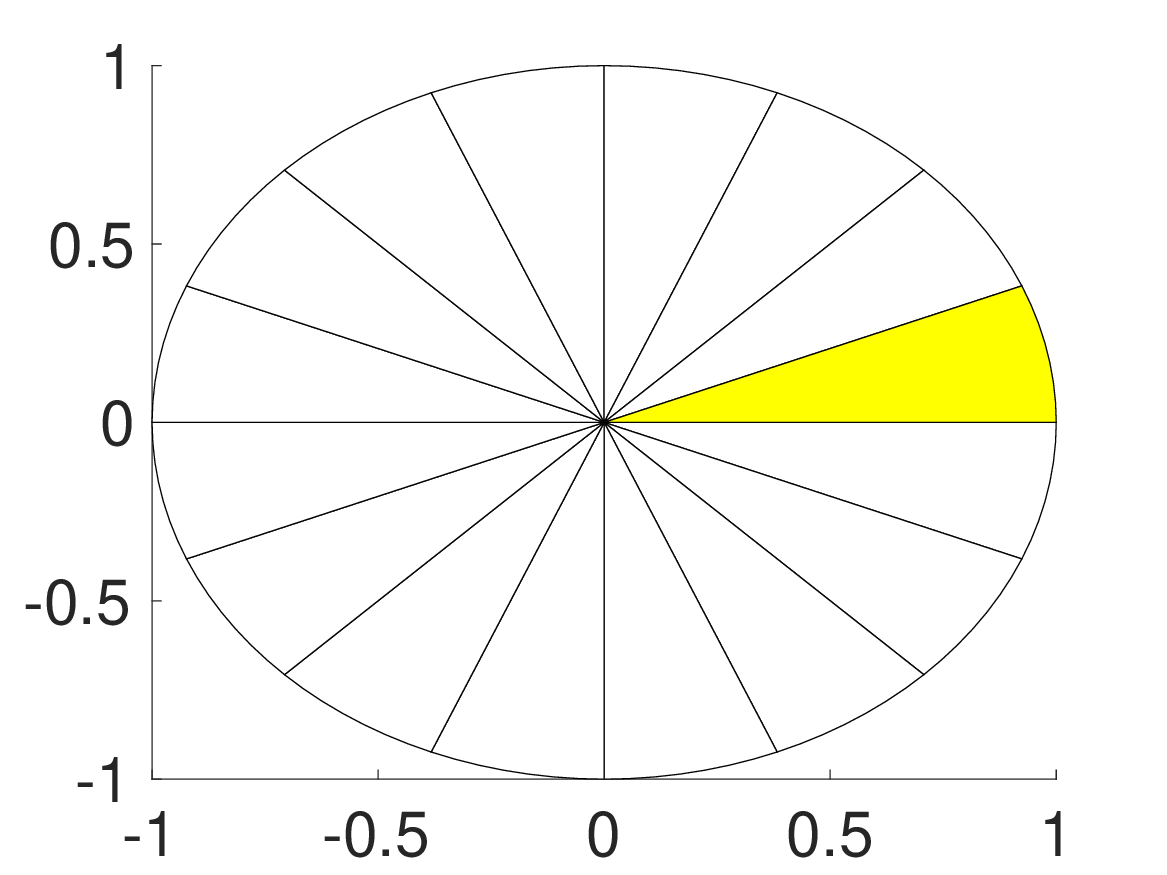}
    \includegraphics[width=.4\columnwidth]{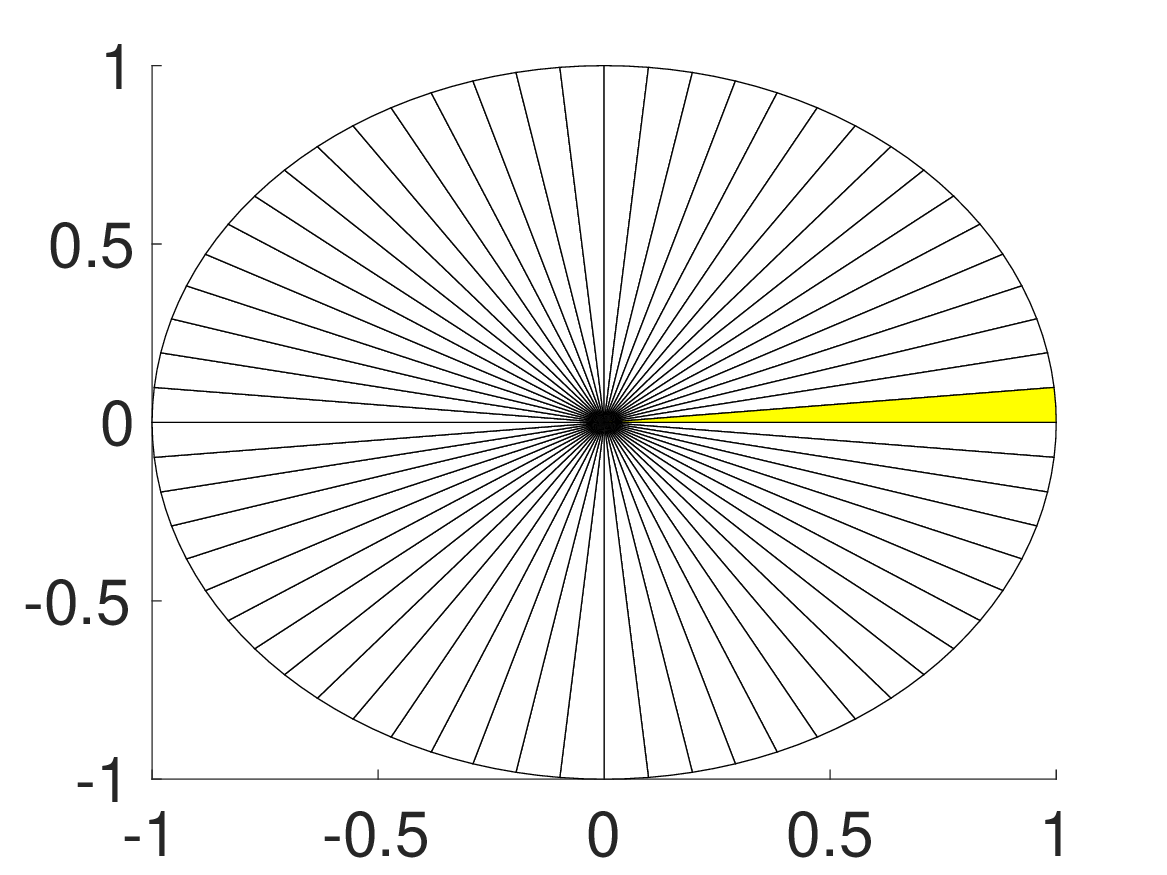}}
    \caption{The unit disk $\CX\subset \R^2$ with the action of the (discrete) rotation groups $\Sigma=C_n$, $n = 1, 4, 16, 64$. The fundamental domain $\mathcal{X}_0$ for each $C_n$ is filled with yellow color.}
	\label{fig:disk}
    \end{center}
    \vskip -0.2in
\end{figure}
Figure~\ref{fig:disk} displays an example where $\CX$ is the unit disk in $\R^2$, and $\Sigma=C_n, n=1, 4, 16, 64,$ are the discrete rotation groups acting on $\CX$; the fundamental domain $\CX_0$ for each $\Sigma=C_n$ is filled with yellow color. We note that the choice of the fundamental domain $\CX_0$ is not unique. We will slightly abuse the notation $\CX = \Sigma\times \CX_0$ to denote $\CX_0$ being a fundamental domain of $\CX$ under the $\Sigma$-action. We define $T_0:\mathcal{X}\to\mathcal{X}_0$
\begin{align}\label{def:quotientmap}
T_0(x) \coloneqq y\in\CX_0, ~\text{if}~y=\sigma x ~\text{for some}~\sigma\in\Sigma,
\end{align}
i.e., $T_0$ maps $x\in\CX$ to its unique orbit representative in $\CX_0$. In addition, we denote by $P_{\mathcal{X}_0}\in \CP(\CX_0)$ the distribution on the fundamental domain $\mathcal{X}_0$ induced by a $\Sigma$-invariant distribution $P\in\CP_\Sigma(\CX)$ on $\CX$; that is,
\begin{equation}
\label{eq:fundamental_domain_measure}
dP_{\mathcal{X}_0}(x) = \int_{y\in\CX: y=\sigma x, \sigma\in\Sigma}dP(x), x\in\CX_0.
\end{equation}
The \textit{diameter} of $\CX\subset \R^d$ is defined as
\begin{align}
    \text{diam}(\CX) = \sup_{x, y\in \CX}\|x-y\|_2.
\end{align}
Finally, part of our results in Section~\ref{sec:sample_complexity} relies heavily on the concept of \textit{covering numbers} which we define below.
\begin{definition}[Covering number] Let $(\CX,\rho)$ be a metric space. A subset $S\subset \CX$ is called a $\delta$-cover of $\mathcal{X}$ if for any $x\in \mathcal{X}$ there is an $s\in S$ such that $\rho(s,x)\leq\delta$. The $\delta$-covering number of $\mathcal{X}$ is defined as
\[
\mathcal{N}(\mathcal{X},\delta,\rho):=\min\left\{\abs{S}:S \text{ is a } \delta\text{-cover of } \mathcal{X}\right\}.
\]
When $\rho(x, y) = \|x-y\|_2$ is the Euclidean metric in $\R^D$, we abbreviate $\mathcal{N}(\mathcal{X},\delta,\rho)$ as $\mathcal{N}(\mathcal{X},\delta)$.
\end{definition}
Following the notion of capacity dimension from \cite{kegl2002intrinsic}, we define the intrinsic dimension of a set as follows.
\begin{definition}\label{def:intrinsic_dimension}
    The intrinsic dimension of $\mathcal{X}\subset\mathbb{R}^D$, denoted by $dim(\mathcal{X})$ is defined as
    \begin{equation}
        dim(\mathcal{X})\coloneqq-\lim_{\delta\to0^+}\frac{\ln\mathcal{N}(\mathcal{X},\delta)}{\log\delta}.
    \end{equation}
\end{definition}
For example, if $\mathcal{X}\subset\mathbb{R}^D$ contains an open set of $\mathbb{R}^D$, then $\dim(\mathcal{X})=D$; if $\mathcal{X}$ is a $d$-dimensional submanifold of $\mathbb{R}^D$, then $\dim(\mathcal{X})=d$.

\section{Sample complexity under group invariance}
\label{sec:sample_complexity}
In this section, we outline our main results for the sample complexity of divergence estimation under group invariance. In particular, we focus on three cases: the Wasserstein-1 metric \eqref{eq:estimator_wasserstein_invariant}, the MMD \eqref{eq:estimator_mmd_invariant} and the $(f_\alpha,\Gamma)$-divergence \eqref{eq:estimator_alpah_invariant}. While the convergence rate in the bounds for the Wasserstein-1 metric and the $(f_\alpha,\Gamma)$-divergence depends on the dimension of the ambient space, that for the MMD case does not. In all the numerical experiments, for simplicity, we choose $X = \{x_1, x_2, \cdots, x_m\}$ and $Y = \{y_1, y_2, \cdots, y_n\}$ to sample from the same $\Sigma$-invariant distribution $P=Q$ for easy visualization and clear benchmark.

\subsection{Wasserstein-1 metric, $W(P, Q)$}
\label{sec:ipm}
In this section, we set $\Gamma = \text{Lip}_L(\mathcal{X})$ to be the set of $L$-Lipschitz functions on $\CX$; see Eq.~\eqref{eq:Wasserstein}. We further assume that the $\Sigma$-actions on $\mathcal{X}$ is $1$-Lipschitz, i.e., $\|\sigma x-\sigma y\|_2\le \|x-y\|_2, \forall \sigma\in\Sigma, \forall x, y\in\CX$, so that $S_\Sigma[\Gamma]\subset \Gamma$ (see Lemma \ref{lemma:symmetrizeGamma} for a proof). Due to Result~\ref{thm:sp-gan-main-result}, we have $W(P, Q) = W^\Sigma(P, Q)$ for $\Sigma$-invariant probability measures $P, Q\in\CP_\Sigma(\CX)$.

To convey the main message, we provide a summary of our result in Theorem~\ref{thm:gammaIPM_main} for the sample complexity under group invariance for the Wasserstein-1 metric. The detailed statement and the technical assumption of the theorem as well as its proof are deferred to \cref{appendix:gammaIPM}. Readers are referred to \cref{sec:background} for the notations.

\begin{theorem}[Finite groups]
\label{thm:gammaIPM_main}
Let $\mathcal{X} = \Sigma\times\mathcal{X}_0$ be a bounded subset of $\mathbb{R}^D$ equipped with the Euclidean distance, $\abs{\Sigma}<\infty$ and $\dim(\mathcal{X})=d$. Suppose $P, Q\in\CP_\Sigma(\CX)$ are $\Sigma$-invariant distributions on $\mathcal{X}$. If the number $m,n$ of samples drawn from $P$ and $Q$ are sufficiently large, then we have with high probability, 

1) when $d\geq2$, for any $s>0$ ,
\begin{align}
\nonumber
&\abs{W(P,Q)-W^{\Sigma}(P_m,Q_n)}\\
\label{eq:d_2_ipm}
&\quad\leq C\left[\left(\frac{1}{\abs{\Sigma}m}\right)^{\frac{1}{d+s}}+\left(\frac{1}{\abs{\Sigma}n}\right)^{\frac{1}{d+s}}\right],
\end{align}
where $C>0$ depends only on $d, s$ and $\CX$, and is independent of $m$ and $n$;

2) for $d=1$, we have
\begin{align}
\nonumber
&\abs{W(P,Q)-W^{\Sigma}(P_m,Q_n)}\\
&\quad\leq C\cdot\text{\normalfont diam}(\CX_0)\left(\frac{1}{\sqrt{m}}+\frac{1}{\sqrt{n}}\right),
\end{align}
where $C>0$ is an absolute constant independent of $\CX, \CX_0, m$ and $n$.
\end{theorem}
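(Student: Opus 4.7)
The plan is to first use Result~\ref{thm:sp-gan-main-result} to rewrite $W(P,Q) = D^{\Gamma_{\Sigma}}(P,Q)$ with $\Gamma_{\Sigma} = [\text{Lip}_L(\CX)]_{\Sigma}$, so that both $W(P,Q)$ and $W^{\Sigma}(P_m,Q_n)$ are suprema over the same restricted test class. A standard triangle inequality for suprema then splits the error into two one-sided empirical-process terms,
\begin{align*}
|W(P,Q) - W^{\Sigma}(P_m,Q_n)| &\leq \sup_{\gamma\in\Gamma_{\Sigma}}|E_P\gamma - E_{P_m}\gamma| \\
&\quad + \sup_{\gamma\in\Gamma_{\Sigma}}|E_Q\gamma - E_{Q_n}\gamma|,
\end{align*}
reducing the problem to uniformly controlling each of these deviations over the class $\Gamma_{\Sigma}$.

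Next, I would exploit the fact that every $\gamma\in\Gamma_{\Sigma}$ is constant on $\Sigma$-orbits, so $\gamma$ is entirely determined by its restriction $\bar\gamma := \gamma|_{\CX_0}$, which is $L$-Lipschitz on $(\CX_0, \|\cdot\|_2)$. Because $P$ is $\Sigma$-invariant, the orbit projections $T_0(x_1),\ldots,T_0(x_m)$ are i.i.d.\ draws from $P_{\CX_0}=(T_0)_\sharp P$, and $E_P\gamma = E_{P_{\CX_0}}\bar\gamma$, $E_{P_m}\gamma = E_{(T_0)_\sharp P_m}\bar\gamma$. Since the restriction map sends $\Gamma_{\Sigma}$ into $\text{Lip}_L(\CX_0)$, we obtain
\begin{align*}
\sup_{\gamma\in\Gamma_{\Sigma}}|E_P\gamma - E_{P_m}\gamma| \leq W_{\CX_0}\!\left(P_{\CX_0},\,(T_0)_\sharp P_m\right),
\end{align*}
with the analogous bound for $Q$. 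This reduces the task to the sample complexity of the empirical Wasserstein-1 distance on the much smaller set $\CX_0$.

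At this point, for $d \geq 2$, I would invoke a standard covering-number-based bound for the expected empirical Wasserstein-1 distance on a bounded subset of $\R^d$, whose leading constant scales with the $\delta$-covering number of the support. The crucial geometric input is that, because $\CX$ is tiled into $|\Sigma|$ congruent copies of $\CX_0$ (up to a set of measure zero) by the isometric $\Sigma$-action, the volumetric estimate $\mathcal{N}(\CX_0,\delta) \leq C\,|\Sigma|^{-1}(\text{diam}(\CX)/\delta)^d$ holds, and plugging this into the empirical-$W_1$ bound delivers the target $(|\Sigma| m)^{-1/(d+s)}$ rate. For $d=1$, the explicit one-dimensional formula $W_1(\mu,\nu)=\int |F_\mu - F_\nu|$ applied on the interval $\CX_0$ together with a DKW-type inequality produces the $\text{diam}(\CX_0)\,m^{-1/2}$ rate, in which the $|\Sigma|$-improvement is absorbed directly into $\text{diam}(\CX_0)$. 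To pass from expected-value to high-probability bounds, I would apply McDiarmid's bounded-differences inequality, since swapping a single sample changes the supremum by at most $O(L\,\text{diam}(\CX_0)/m)$.

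The main obstacle is the geometric step: sharply quantifying how the covering number (or volume) of the fundamental domain $\CX_0$ shrinks by the factor $|\Sigma|^{-1}$ relative to $\CX$ while keeping all hidden constants independent of $|\Sigma|$. Boundary effects where copies of $\CX_0$ meet under the group action, and verifying that the restriction of an $L$-Lipschitz $\Sigma$-invariant function on $\CX$ is indeed $L$-Lipschitz on $\CX_0$ with the inherited Euclidean metric (no loss of Lipschitz constant from the quotienting), are the subtle parts; once these are in hand, the reduction to the standard one-distribution sample-complexity literature on $\CX_0$ makes the rest of the proof essentially mechanical.
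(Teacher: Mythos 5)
Your proposal is correct in its reduction to the fundamental domain and in identifying the key geometric ingredient (the $|\Sigma|^{-1}$ drop in the covering number of $\CX_0$ relative to $\CX$), but it takes a genuinely different route from the paper in how it then extracts the rate. You bound the symmetric empirical-process supremum by the Wasserstein distance $W_1(P_{\CX_0},(T_0)_\sharp P_m)$ on $\CX_0$ and plan to cite off-the-shelf empirical-$W_1$ convergence results for $d\ge 2$ and DKW with the one-dimensional CDF formula for $d=1$. The paper instead never passes through $W_1$ on $\CX_0$; it works directly with the empirical process via symmetrization and Dudley's entropy integral, controlling $\ln\mathcal{N}(\mathcal{F}_0,\delta,\|\cdot\|_\infty)$ by the covering number of $\CX_0$ (Lemma~\ref{lemma:coverbycover}) and then applying the ratio Lemma~\ref{lemma:ratioofcovering} and the scaling Lemma~\ref{lemma:scalingofcoveringnumber}, which requires the separation/non-contraction hypothesis in Assumption~\ref{assumption:sapiro}. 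For $d=1$ the paper likewise stays in the Dudley framework, using the explicit entropy bound for Lipschitz functions on an interval rather than DKW. Your route buys conceptual clarity by piggy-backing on the empirical-Wasserstein literature, but it hides real work: the off-the-shelf bounds (Fournier--Guillin, Weed--Bach, etc.) are not usually stated with constants that cleanly factor into $\mathcal{N}(\CX_0,\delta)$ in a way that exhibits $|\Sigma|^{-1/(d+s)}$, so you would effectively need to reopen their proofs anyway, at which point you are re-deriving the paper's Dudley computation. Two things you should make explicit to close the argument: (i) the covering-number comparison between $\CX_0$ and $\CX$ is not purely volumetric---you need the separation and non-contraction conditions of Assumption~\ref{assumption:sapiro} to justify $\mathcal{N}(\CX_0,\delta)\le\mathcal{N}(\CX,\delta)/|\Sigma|$; and (ii) the bounded-differences step for McDiarmid needs the shift-invariance argument (Lemma~\ref{lemma:infinitybound}) to replace unbounded Lipschitz test functions by $L\cdot\mathrm{diam}(\CX_0)$-bounded ones before the $O(L\,\mathrm{diam}(\CX_0)/m)$ increment claim is valid.
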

When $\Sigma$ is infinite and $\dim(\mathcal{X}_0)=d^*$, we have the following result with improved convergence rates.
\begin{theorem}[Infinite groups]\footnote{Compared to the work in \cite{tahmasebisample} for continuous groups, our proof relies on the covering numbers, while \cite{tahmasebisample} applies the Weyl's law on Riemannian manifolds without using covering number argument.}
\label{thm:gammaIPM_main_infinite_group}
Let $\mathcal{X} = \Sigma\times\mathcal{X}_0$ be a bounded subset of $\mathbb{R}^D$ equipped with the Euclidean distance and $\dim(\mathcal{X}_0)=d^*$. Suppose $P, Q\in\CP_\Sigma(\CX)$ are $\Sigma$-invariant distributions on $\mathcal{X}$. Then we have with high probability, 

1) when $d^*\geq2$, for any $s>0$ ,
\begin{align}
\nonumber
&\abs{W(P,Q)-W^{\Sigma}(P_m,Q_n)}\\
\label{eq:d_2_ipm}
&\quad\leq C\left[\left(\frac{\text{vol}(\mathcal{X}_0)}{m}\right)^{\frac{1}{d^*+s}}+\left(\frac{\text{vol}(\mathcal{X}_0)}{n}\right)^{\frac{1}{d^*+s}}\right],
\end{align}
where $C>0$ depends only on $d, s$ and $\CX$, and is independent of $m$ and $n$, and $\text{vol}(\mathcal{X}_0)$ is the volume of $\mathcal{X}_0$;

2) for $d^*=1$, we have
\begin{align}
\nonumber
&\abs{W(P,Q)-W^{\Sigma}(P_m,Q_n)}\\
&\quad\leq C\cdot\text{\normalfont diam}(\CX_0)\left(\frac{1}{\sqrt{m}}+\frac{1}{\sqrt{n}}\right),
\end{align}
where $C>0$ is an absolute constant independent of $\CX, \CX_0, m$ and $n$.
\end{theorem}
Furthermore, if $\mathcal{X}$ is a $d$-dimensional connected submanifold, and $\Sigma$ is a compact Lie group acting locally smoothly on $\mathcal{X}$, then $d^*=d-\dim(\Sigma)$, where $\dim(\Sigma)$ is the dimension of a principal orbit (i.e., the maximal
dimension among all orbits) by Theorem IV 3.8 in \cite{bredon1972introduction}. This recovers the bound derived in \cite{tahmasebisample}.

\textit{Sketch of the proof.} 
Using the group invariance and the map $T_0$ defined in \eqref{def:quotientmap}, we can transform the i.i.d. samples on $\CX$ to i.i.d. samples on $\CX_0$, which are effectively sampled from $P_{\CX_0}$ and $Q_{\CX_0}$ [cf.~Eq.~\eqref{eq:fundamental_domain_measure}]. Hence the supremum after applying the triangle inequality to the error \eqref{eq:d_2_ipm} can be taken over $L$-Lipschitz functions defined on the fundamental domain $\CX_0$, i.e., $\text{Lip}_L(\CX_0)$, instead of over the original space $\text{Lip}_L(\CX)$. We further demonstrate in \cref{lemma:infinitybound} that the supremum can be taken over an even smaller function space
\begin{align}
\label{eq:F_0}
    \mathcal{F}_0 = \{\gamma\in\text{Lip}_L(\mathcal{X}_0):\norm{\gamma}_\infty\leq M\}\subset \text{Lip}_L(\CX_0),
\end{align}
with some uniformly bounded $L^\infty$-norm $M$ due to the translation-invariance of $\gamma$ in definition \eqref{eq:Wasserstein}. Using the Dudley's entropy integral \cite{bartlett2017spectrally}, the error can be bounded in terms of the metric entropy of $\mathcal{F}_0$,
\begin{align}
    \inf_{\alpha>0} \left\{8\alpha+\frac{24}{\sqrt{m}}\int_{\alpha}^{M}\sqrt{\ln\mathcal{N}(\mathcal{F}_0,\delta,\norm{\cdot}_{\infty})}\diff{\delta}\right\}.
\end{align}
For $d\geq2$, we establish the relations between the metric entropy, $\ln\mathcal{N}(\mathcal{F}_0,\delta,\norm{\cdot}_{\infty})$, of $\mathcal{F}_0$ and the covering numbers of $\CX_0$ and $\CX$ via \cref{lemma:coverbycover} and \cref{lemma:ratioofcovering}:
\begin{equation}\label{eq:coverbycover}
    \ln\mathcal{N}(\mathcal{F}_0,\delta,\norm{\cdot}_{\infty})\leq\mathcal{N}(\mathcal{X}_0,\frac{c_2\delta}{L})\ln(\frac{c_1M}{\delta}),
\end{equation}
\begin{equation}\label{eq:ratiobygroupsize}
    \frac{\mathcal{N}(\mathcal{X}_0,\delta)}{\mathcal{N}(\mathcal{X},\delta)} \leq\frac{1}{\abs{\Sigma}}, ~\text{for small enough}~\delta,
\end{equation}
which yields a factor in terms of the group size $\abs{\Sigma}$ in Eq.~\eqref{eq:d_2_ipm} when $\Sigma$ is finite. The dominant term of the bound based on the singularity of the entropy integral at $\alpha=0$ is shown in Eq.~\eqref{eq:d_2_ipm}. 
For $d=1$, the entropy integral is not singular at the origin, and we bound the covering number of $\mathcal{F}_0$ by $\text{diam}(\CX_0)$ instead. The probability bound is from the application of the McDiarmid's inequality. For \cref{thm:gammaIPM_main_infinite_group}, the result is direct using \cref{eq:coverbycover} with the condition $\mathcal{N}(\mathcal{X}_0,\delta)\lesssim\delta^{-d^*}$ implied by \cref{def:intrinsic_dimension} without resorting to \cref{eq:ratiobygroupsize}.
\begin{remark}
    Even though we present in Theorem~\ref{thm:gammaIPM_main} only the dominant terms showing the rate of convergence for the  estimator, our result for sample complexity is actually non-asymptotic. See Theorem~\ref{thm:gammaIPM} in \cref{appendix:gammaIPM} for a complete description of the result.
\end{remark}
\begin{remark}
    When $|\Sigma|= 1$, i.e., no group symmetry is leveraged in the divergence estimation, our result reduces to the case considered in, e.g., \cite{sriperumbudur2012empirical}, for general distributions $P, Q\in\CP_\Sigma(\CX)= \CP(\CX)$.
\end{remark}
\begin{remark}
\label{remark:ratio_main_d_2}
In the case for $d\geq2$, the $s>0$ in \cref{thm:gammaIPM_main} means the rate can be arbitrarily close to $-\frac{1}{d}$. If we further assume that $\CX_0$ is connected, then the bound can be improved to $\abs{W(P,Q)-W^{\Sigma}(P_m,Q_n)}\leq C\left[\left(\frac{1}{\abs{\Sigma}m}\right)^{\frac{1}{2}}\ln m+\left(\frac{1}{\abs{\Sigma}n}\right)^{\frac{1}{2}}\ln n\right]$ for $d=2$, and $\abs{W(P,Q)-W^{\Sigma}(P_m,Q_n)}\leq C\left[\left(\frac{1}{\abs{\Sigma}m}\right)^{\frac{1}{d}}+\left(\frac{1}{\abs{\Sigma}n}\right)^{\frac{1}{d}}\right]$ for $d\geq 3$, without the dependence of $s$, which matches the rate in \cite{fournier2015rate}. See Remark~\ref{rmk:coverbycover} after Lemma~\ref{lemma:coverbycover}. Similar improvement holds for \cref{thm:gammaIPM_main_infinite_group} in $d^*$. 
\end{remark}
\begin{remark}
\label{remark:ratio_main}
    The factor $\text{diam}(\CX_0)$ in the case for $d=1$ is not necessarily directly related to the group size $\abs{\Sigma}$. We refer to \cref{example:wss1d} below and its explanation in \cref{rmk:wss-1d} for cases where we can achieve a factor of $\abs{\Sigma}^{-1}$ in the convergence bound.
\end{remark}

\begin{example}\label{example:wss1d}
Let $\mathcal{X} = [0,1)$ and $\Gamma = \text{Lip}_L\left([0,1)\right)$, i.e., $d=1$. We consider the $\Sigma$-actions on $\CX$ generated by the translation $x \mapsto (x+\frac{1}{r})\mod1$, where $r = 1, 4, 16, 64, 256$, so that $|\Sigma| = r$ is the group size. We draw samples $x_i\sim P=Q\in \CP_\Sigma(\CX)$ on $\CX$ in the following way: $x_i = r^{-1}\xi^{1/3}_i +\eta_i$, where $\xi_i$ are i.i.d. uniformly distributed random variables on $[0,1)$ and $\eta_i$ take values over $\{0,\frac{1}{r}, \dots,\frac{r-1}{r}\}$ with equal probabilities. One can easily verify that $P=Q$ are indeed $\Sigma$-invariant. The numerical results for the empirical estimation of $W(P, Q)= 0$ using $W^\Sigma(P_n, Q_n)$ with different group size $|\Sigma| = r$, $r = 1, 4, 16, 64, 256$, are shown in the left panel of \cref{fig:wasserstein}. One can clearly observe a significant improvement of the estimator as the group size $|\Sigma|$ increases. Furthermore, the right panel of \cref{fig:wasserstein} displays the ratios between the adjacent curves, all of which converge to 4, which is the ratio between the consecutive group size. This matches our calculation in \cref{rmk:wss-1d}; see also \cref{remark:ratio_main}.
\end{example}

\begin{example}\label{example:wss2d}
We let $\mathcal{X} = \mathbb{R}^2$, i.e., $d=2$. The probability distributions $P = Q$ are the mixture of 8 Gaussians centered at $\left(\cos(\frac{2\pi r}{8}),\sin(\frac{2\pi r}{8})\right)$, $r=0,1,\dots,7$, with the same covariance. The distribution has $C_8$-rotation symmetry, but we pretend that it is only $C_1, C_2$ and $C_4$; that is, the $\Sigma$ used in the empirical estimation $W^{\Sigma}(P_m,Q_n)$ does not reflect the entire invariance structure. Even though in this case the domain $\CX$ is unbounded, which is beyond our theoretical assumptions, we can still see in \cref{fig:wasserstein2d} that as we increase the group size $|\Sigma|$ in the computation of $W^{\Sigma}(P_m,Q_n)$, fewer samples are needed to reach the same accuracy level in the approximation. The ratios between adjacent curves in this case are slight above the predicted value $\sqrt{2}\approx 1.414$ according to our theory (see \cref{remark:ratio_main_d_2}), suggesting that the complexity bound could be further improved. For instance, in \cite{sriperumbudur2012empirical}, a logarithmic correction term can be revealed for $d=2$ after a more thorough analysis.
\end{example}

\begin{figure}[ht]
\vskip 0.2in
    \begin{center}
\centerline{\includegraphics[width=.4\columnwidth]{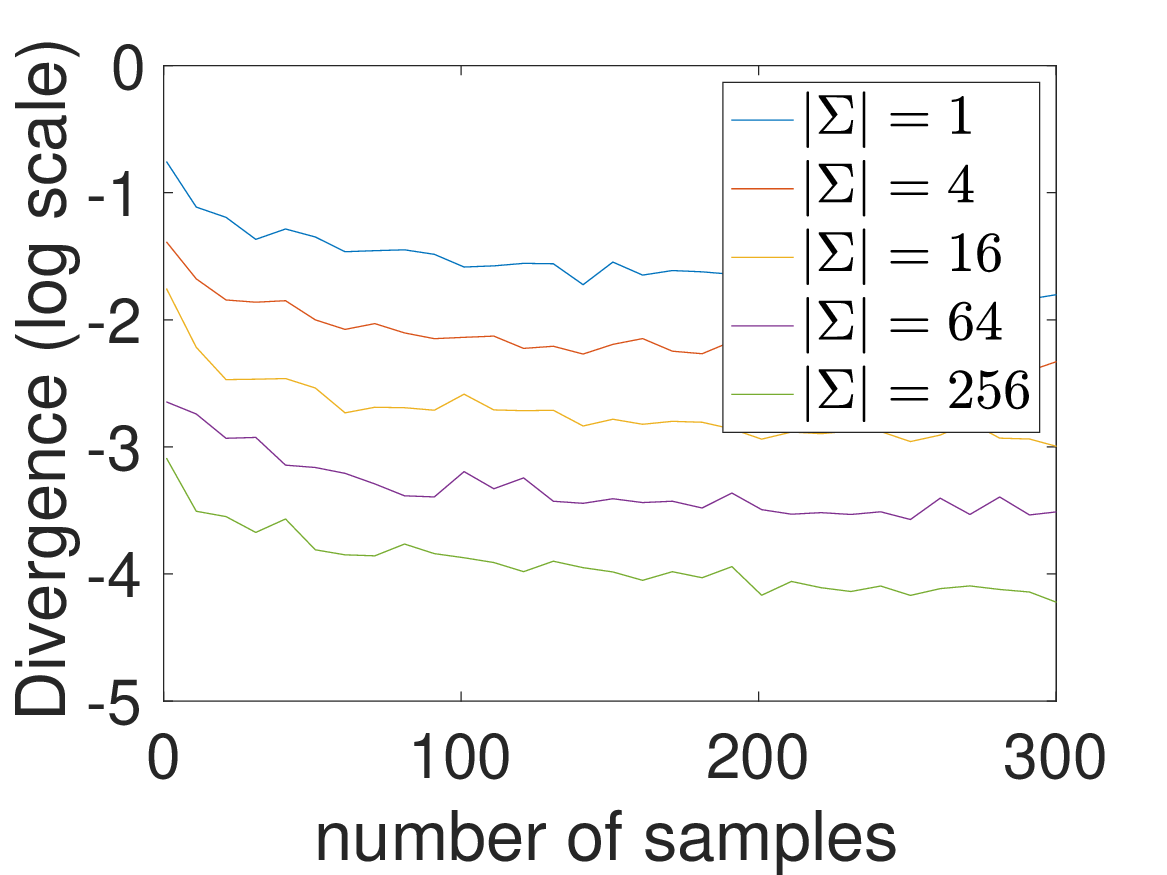}
\includegraphics[width=.4\columnwidth]{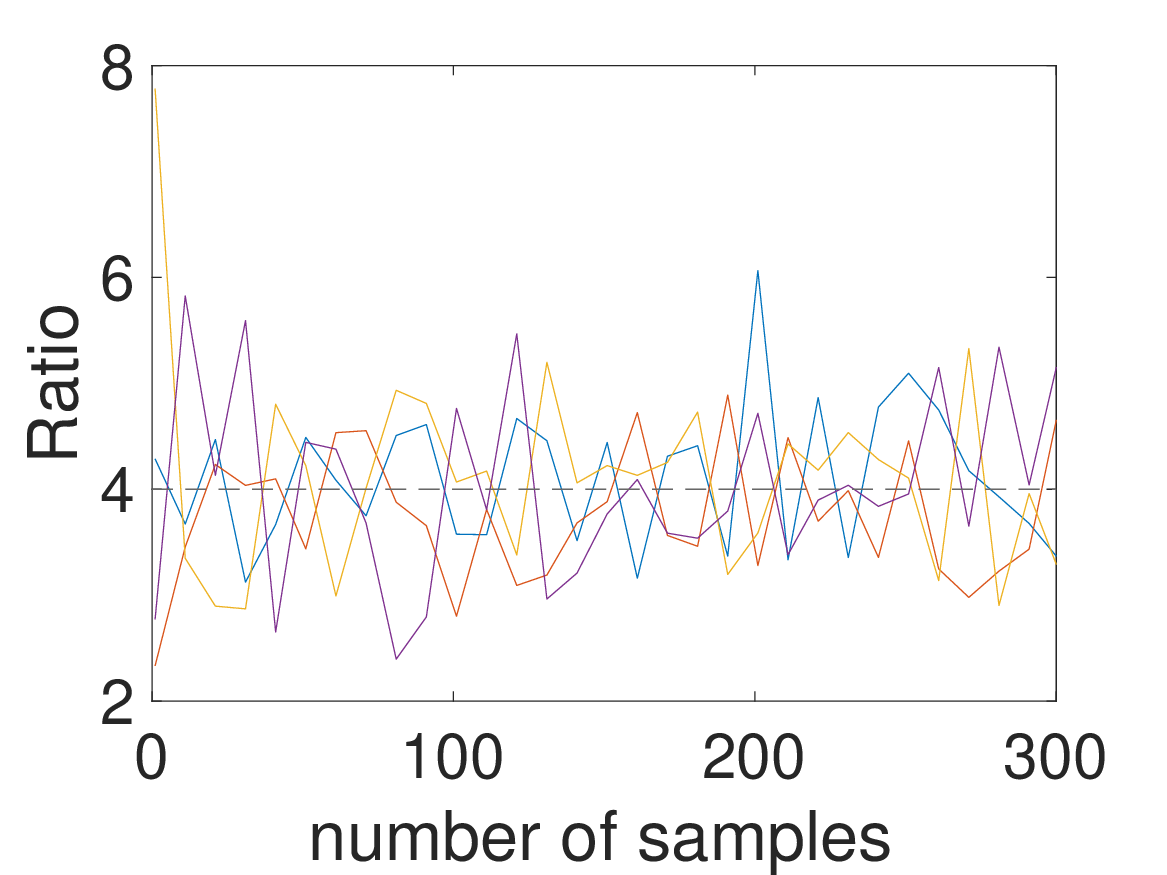}}
    \caption{Left: the Wasserstein-1 distance with different group sizes on $[0,1)$, averaged over 10 replicas. Right: the ratio of the average of the Wasserstein-1 distance between different group sizes: $\abs{\Sigma} =1$ over $\abs{\Sigma} =4$ (blue), $\abs{\Sigma} =4$ over $\abs{\Sigma} =16$ (red), $\abs{\Sigma} =16$ over $\abs{\Sigma} = 64$ (orange), $\abs{\Sigma} =64$ over $\abs{\Sigma} =256$ (purple). The black horizontal dashed line refers to the ratio equal to 4, which is the value theoretically predicted in \cref{thm:gammaIPM_main} for $d=1$. See \cref{example:wss1d} and \cref{rmk:wss-1d} for the detail.}
    \label{fig:wasserstein}
    \end{center}
    \vskip -0.2in
\end{figure}

\begin{figure}[ht]
\vskip 0.2in
    \begin{center}
\centerline{\includegraphics[width=.4\columnwidth]{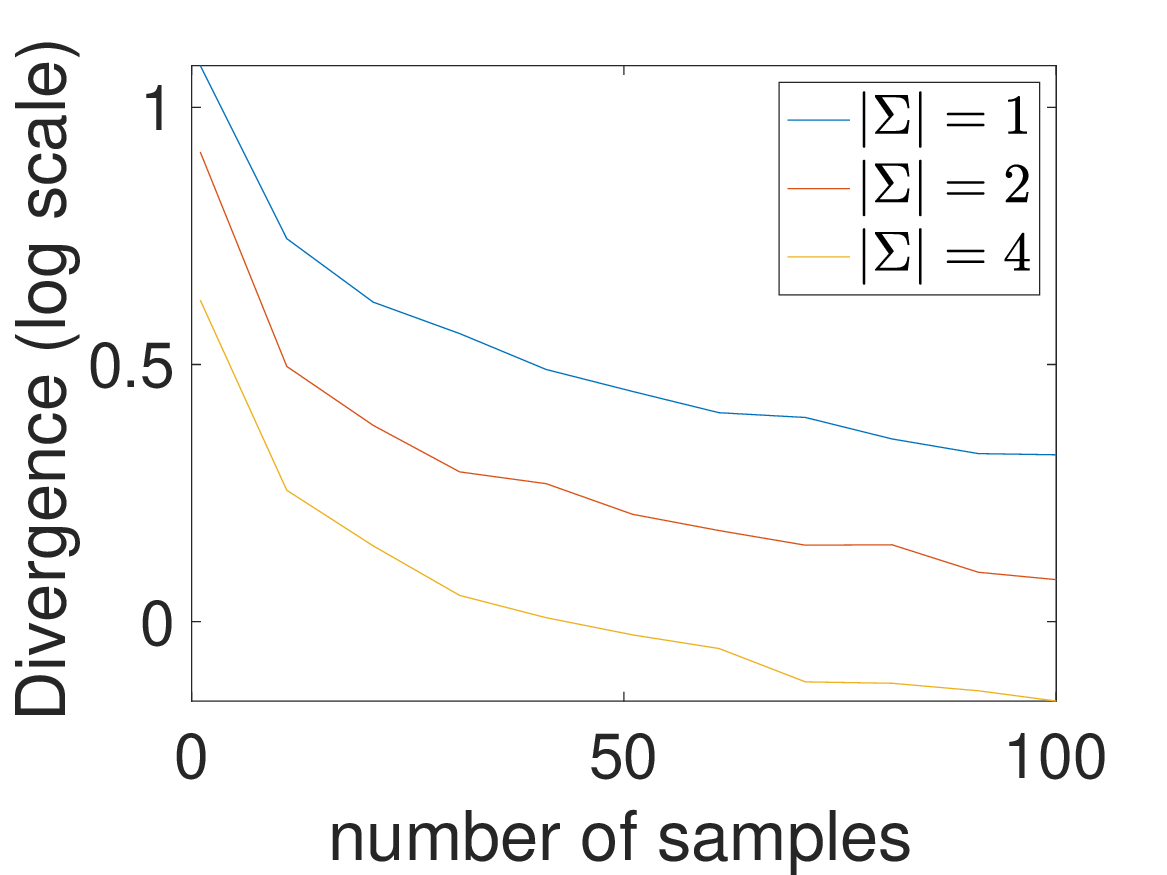}
\includegraphics[width=.4\columnwidth]{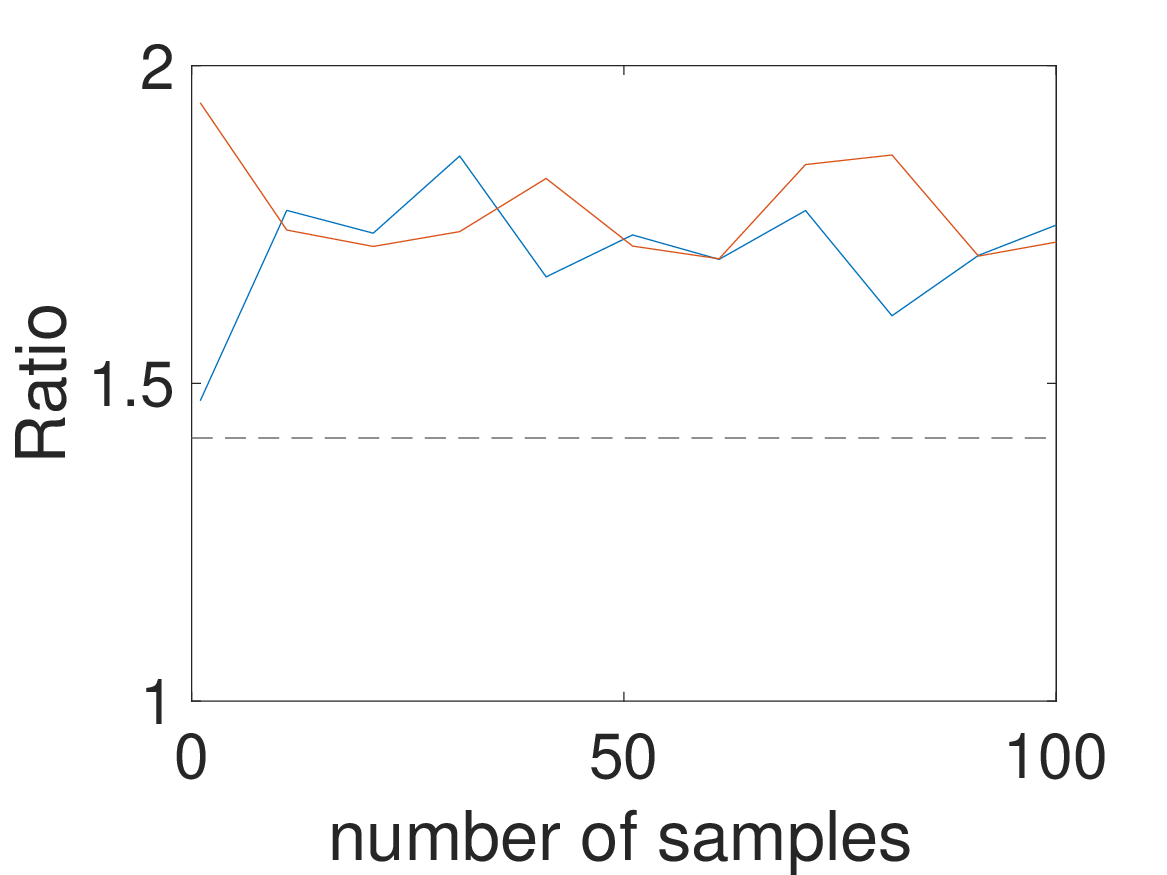}}
    \caption{Left: The Wasserstein-1 distance assuming different group sizes in $\mathbb{R}^2$, averaged over 10 replicas. Right: the ratio of the average of the Wasserstein-1 distance between different group sizes: $\abs{\Sigma} =1$ over $\abs{\Sigma} =2$ (blue), $\abs{\Sigma} =2$ over $\abs{\Sigma} =4$ (red). The black horizontal dashed line refers to the ratio equal to $\sqrt{2}$, which is the value theoretically predicted in \cref{thm:gammaIPM_main} for $d=2$. The ratios are slight above the reference line, suggesting that the complexity bound could be further improved. See \cref{example:wss2d} and \cref{remark:ratio_main_d_2} for the detail.}
    \label{fig:wasserstein2d}
    \end{center}
    \vskip -0.2in
\end{figure}

\subsection{Lipschitz-regularized $\alpha$-divergence,  $D_{f_\alpha}^\Gamma(P\|Q)$}
The space $\Gamma$ in this section is always set to $\Gamma = \text{Lip}_L(\CX)$; see Eq.~\eqref{eq:alpha_divergence}. We only consider $\alpha > 1$, as the case when $0<\alpha<1$ can be derived in a similar manner. For $\alpha>1$, the Legendre transform of $f_\alpha$, which is defined in \eqref{eq:alpha_divergence}, is
\[
f_\alpha^*(y) = \left(\alpha^{-1}(\alpha-1)^{\frac{\alpha}{\alpha-1}}y^{\frac{\alpha}{\alpha-1}}+\frac{1}{\alpha(\alpha-1)}\right)\mathbf{1}_{y>0}.
\]

We provide a theorem for the sample complexity for the $(f_\alpha,\Gamma)$-divergence under group invariance, whose detailed statement and proof can be found in \cref{appendix:falphagamma}. We note that this is a new sample complexity result for the $(f_\alpha,\Gamma)$-divergence even without the group structure, which is still missing in the literature. 
\begin{theorem}[Finite groups]
\label{thm:f-gamma_main}
Let $\mathcal{X} = \Sigma\times\mathcal{X}_0$ be a subset of $\mathbb{R}^D$ equipped with the Euclidean distance, $\abs{\Sigma}<\infty$ and $\dim(\mathcal{X})=d$. Let $f_\alpha(x)=\frac{x^\alpha-1}{\alpha(\alpha-1)}$, $\alpha>1$ and $\Gamma = \text{Lip}_L(\mathcal{X})$. Suppose $P$ and $Q$ are $\Sigma$-invariant distributions on $\mathcal{X}$. If the number of samples $m,n$ drawn from $P$ and $Q$ are sufficiently large, we have with high probability,

1) when $d\geq2$, for any $s>0$ ,
\begin{align}
\nonumber
&\abs{D_{f_\alpha}^{\Gamma}(P\| Q)-D_{f_\alpha}^{\Gamma_{\Sigma}}(P_m\|Q_n)}\\
&\leq C_1\left(\frac{1}{\abs{\Sigma}m}\right)^{\frac{1}{d+s}}+C_2\left(\frac{1}{\abs{\Sigma}n}\right)^{\frac{1}{d+s}},
\end{align}
where $C_1$ depends only on $d,s$ and $\CX$; $C_2$ depends only on $d,s$, $\CX$ and $\alpha$; both $C_1$ and $C_2$ are independent of $m$ and $n$;

2) for $d=1$, we have
\begin{align}
\nonumber
&\abs{D_{f_\alpha}^{\Gamma}(P\| Q)-D_{f_\alpha}^{\Gamma_{\Sigma}}(P_m\|Q_n)}\\
&\quad\leq \text{\normalfont diam}(\CX_0)\left(\frac{C_1}{\sqrt{m}}+\frac{C_2}{\sqrt{n}}\right),
\end{align}
where $C_1$ and $C_2$ are independent of $\CX_0$, $m$ and $n$; $C_2$ depends on $\alpha$.
\end{theorem}
When $\Sigma$ is infinite and $\dim(\mathcal{X}_0)=d^*$ for some $d^*<d$, we have the following result with improved convergence rates.
\begin{theorem}[Infinite groups]
\label{thm:f-gamma_main_infinite_group}
Let $\mathcal{X} = \Sigma\times\mathcal{X}_0$ be a subset of $\mathbb{R}^D$ equipped with the Euclidean distance and $\dim(\mathcal{X}_0)=d^*$. Let $f_\alpha(x)=\frac{x^\alpha-1}{\alpha(\alpha-1)}$, $\alpha>1$ and $\Gamma = \text{Lip}_L(\mathcal{X})$. Suppose $P$ and $Q$ are $\Sigma$-invariant distributions on $\mathcal{X}$. If the number of samples $m,n$ drawn from $P$ and $Q$ are sufficiently large, we have with high probability,

1) when $d\geq2$, for any $s>0$ ,
\begin{align}
\nonumber
&\abs{D_{f_\alpha}^{\Gamma}(P\| Q)-D_{f_\alpha}^{\Gamma_{\Sigma}}(P_m\|Q_n)}\\
&\leq C_1\left(\frac{\text{vol}(\mathcal{X}_0)}{m}\right)^{\frac{1}{d^*+s}}+C_2\left(\frac{\text{vol}(\mathcal{X}_0)}{n}\right)^{\frac{1}{d^*+s}},
\end{align}
where $C_1$ depends only on $d,s$ and $\CX$; $C_2$ depends only on $d,s$, $\CX$ and $\alpha$; both $C_1$ and $C_2$ are independent of $m$ and $n$;

2) for $d=1$, we have
\begin{align}
\nonumber
&\abs{D_{f_\alpha}^{\Gamma}(P\| Q)-D_{f_\alpha}^{\Gamma_{\Sigma}}(P_m\|Q_n)}\\
&\quad\leq \text{\normalfont diam}(\CX_0)\left(\frac{C_1}{\sqrt{m}}+\frac{C_2}{\sqrt{n}}\right),
\end{align}
where $C_1$ and $C_2$ are independent of $\CX_0$, $m$ and $n$; $C_2$ depends on $\alpha$.
\end{theorem}
\textit{Sketch of the proof.} The idea is similar to the proof of \cref{thm:gammaIPM_main}. The only difference is that we need to tackle the $f_\alpha^*(\gamma)$ term separately, since it is not translation-invariant in $\gamma$. Using the equivalent form \eqref{eq:falphagammanew}, we can obtain a different Lipschitz constant associated with $f_\alpha^*$, as well as a different $L^\infty$ bound $M$ than that in Eq.~\eqref{eq:F_0} by \cref{lemma:falphainfinitybound}. This results in the $\alpha$ dependence for $C_2$.
\begin{remark}[Compact Lie groups]
    Similar to \cref{thm:gammaIPM_main} and \cref{thm:gammaIPM_main_infinite_group}, $s$ in the bound can be removed in \cref{thm:f-gamma_main} and \cref{thm:f-gamma_main_infinite_group} when $\mathcal{X}_0$ is connected. Furthurmore, if $\mathcal{X}$ is a $d$-dimensional connected submanifold, and $\Sigma$ is a compact Lie group acting locally smoothly on $\mathcal{X}$, then $d^*=d-\dim(\Sigma)$, where $\dim(\Sigma)$ is the dimension of a principal orbit (i.e., the maximal
dimension among all orbits) by Theorem IV 3.8 in \cite{bredon1972introduction}.
\end{remark}
\subsection{Maximum mean discrepancy, $\text{MMD}(P, Q)$}
Though one can utilize the results on the covering number of the unit ball of a reproducing kernel Hilbert space, e.g. \cite{zhou2002covering, kuhn2011covering}, to derive the sample complexity bounds that depend on the dimension $d$, we provide a dimension-independent bound as in \cite{gretton2012kernel} without the use of the covering numbers. In the MMD case, we let $B_\mathcal{H}$ represent the unit ball in some reproducing kernel Hilbert space (RKHS) $\mathcal{H}$ on $\mathcal{X}$; see Eq.~\eqref{eq:MMD}. In addition, we make the following assumptions for the kernel $k(x,y)$.
\begin{assumption}
\label{assump:kernel}
The kernel $k(x, y)$ for $\mathcal{H}$ satisfies
\begin{itemize}
\item $k(x,y)\geq0$ and $k(\sigma(x),\sigma(y))=k(x,y),\forall{\sigma}\in\Sigma, x,y\in\mathcal{X}$;

\item Let $K:=\max_{x,y\in\mathcal{X}}k(x,y)$, then $k(x,y)=K$ if and only if $x=y$;

\item There exists $c_{\Sigma, k}\in(0,1)$ such that for any $\sigma\in\Sigma$ and $\sigma$ is not the identity element and $x\in\mathcal{X}_0$, we have $k(\sigma x,x)\leq c_{\Sigma, k} K$. 
\end{itemize}
\end{assumption}
Intuitively, the third condition in Assumption~\ref{assump:kernel} suggests uniform decay of the kernel on the group orbits. See \cref{rmk:kernel} and \cref{example:1} for more details and a related example.

From Lemma C.1 in \cite{birrell2022structure}, we know $S_{\Sigma}[\Gamma]\subset\Gamma$ by the first assumption. Below is an abbreviated result for the sample complexity for the MMD, whose detailed statement and proof can be found in \cref{appendix:MMD}.
\begin{theorem}\label{thm:mmd_main}
Let $\mathcal{X} = \Sigma\times\mathcal{X}_0$ and $\abs{\Sigma}<\infty$. $\mathcal{H}$ is a RKHS on $\mathcal{X}$ whose kernel satisfies Assumption~\ref{assump:kernel}. Suppose $P$ and $Q$ are $\Sigma$-invariant distributions on $\mathcal{X}$. Then for $m,n$ sufficiently large, we have with high probability,
\begin{align}
\nonumber
    &\abs{\text{\normalfont MMD}(P,Q)-\text{\normalfont MMD}^{\Sigma}(P_m,Q_n)}\\
    &\quad< O\left(C_{\Sigma, k}\left(\frac{1}{\sqrt{m}} + \frac{1}{\sqrt{n}}\right)\right),
\end{align}
where $C_{\Sigma, k} = \sqrt{\frac{1+c_{\Sigma, k}(\abs{\Sigma}-1)}{\abs{\Sigma}}}$, and $c_{\Sigma, k}$ is the constant in Assumption~\ref{assump:kernel}.
\end{theorem}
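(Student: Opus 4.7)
The plan is to express $\text{MMD}^\Sigma(P,P_m)$ as a Hilbert-space norm in an appropriate $\Sigma$-invariant sub-RKHS and then apply the classical expectation-plus-concentration analysis, with the improvement factor $C_{\Sigma,k}$ entering only through a careful bound on the diagonal of the symmetrized kernel. By Result~\ref{thm:sp-gan-main-result}, $\text{MMD}(P,Q) = \text{MMD}^\Sigma(P,Q)$, and the triangle inequality gives
\begin{align*}
\abs{\text{MMD}^\Sigma(P,Q) - \text{MMD}^\Sigma(P_m,Q_n)} \leq \text{MMD}^\Sigma(P,P_m) + \text{MMD}^\Sigma(Q,Q_n),
\end{align*}
so it suffices to control each term on the right.

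Next I would identify $[B_\mathcal{H}]_\Sigma$ with the unit ball of the closed sub-RKHS $\mathcal{H}_\Sigma \subset \mathcal{H}$ of $\Sigma$-invariant functions. Because $k$ itself is $\Sigma$-invariant by \cref{assump:kernel}, a direct reproducing-property computation shows $\mathcal{H}_\Sigma$ has reproducing kernel
\[
k_\Sigma(x,y) = \frac{1}{\abs{\Sigma}}\sum_{\sigma\in\Sigma} k(\sigma x, y),
\]
and hence $\text{MMD}^\Sigma(P,P_m) = \norm{\mu_P^\Sigma - \mu_{P_m}^\Sigma}_{\mathcal{H}_\Sigma}$ with the invariant mean embedding $\mu_P^\Sigma = E_{x\sim P}[k_\Sigma(x,\cdot)]$.

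The key step is to bound $\sup_{x\in\mathcal{X}} k_\Sigma(x,x)$. Using the $\Sigma$-invariance of $k$ one checks that $k_\Sigma(\sigma x,\sigma x) = k_\Sigma(x,x)$, so it suffices to consider $x\in\mathcal{X}_0$; there the $\sigma=e$ term contributes $k(x,x)=K$ while each of the remaining $\abs{\Sigma}-1$ terms contributes at most $c_{\Sigma,k}K$ by the third bullet of \cref{assump:kernel}. This yields
\[
\sup_{x\in\mathcal{X}} k_\Sigma(x,x) \leq \frac{1+(\abs{\Sigma}-1)c_{\Sigma,k}}{\abs{\Sigma}}K = C_{\Sigma,k}^2 K.
\]
Plugging this into the standard identity $E[\text{MMD}^\Sigma(P,P_m)^2] = m^{-1}\bigl(E_{x\sim P}[k_\Sigma(x,x)] - E_{x,x'\sim P}[k_\Sigma(x,x')]\bigr)$, using $k_\Sigma \geq 0$, and applying Jensen's inequality gives $E[\text{MMD}^\Sigma(P,P_m)] \leq C_{\Sigma,k}\sqrt{K}/\sqrt{m}$.

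Finally I would upgrade this expectation bound to a high-probability bound via McDiarmid's inequality: swapping the $i$-th sample $x_i \mapsto x_i'$ changes $\text{MMD}^\Sigma(P,P_m)$ by at most $m^{-1}\bigl(\norm{k_\Sigma(x_i,\cdot)}_{\mathcal{H}_\Sigma}+\norm{k_\Sigma(x_i',\cdot)}_{\mathcal{H}_\Sigma}\bigr) \leq 2C_{\Sigma,k}\sqrt{K}/m$, so with high probability $\text{MMD}^\Sigma(P,P_m) \leq O(C_{\Sigma,k}/\sqrt{m})$; the same argument applied to $Q_n$ and a union bound finish the proof. The main technical obstacle is justifying the very first structural step, namely that $[B_\mathcal{H}]_\Sigma$ coincides with the unit ball of the sub-RKHS $\mathcal{H}_\Sigma$ whose reproducing kernel is the averaged $k_\Sigma$, together with verifying that $k_\Sigma(x,x)$ is constant along orbits so that the fundamental-domain bound in \cref{assump:kernel} actually applies uniformly on $\mathcal{X}$; once that identification is in hand, the rest is a drop-in specialization of the well-known MMD sample-complexity argument with the improved constant $C_{\Sigma,k}^2 K$ in place of $K$.
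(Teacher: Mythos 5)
Your proof is correct, and it takes a genuinely different route from the paper's. The paper does not split into one-sample problems; instead, via Result~\ref{thm:sp-gan-main-result} it rewrites $\text{MMD}^{\Sigma}(P_m,Q_n)=\text{MMD}(S^{\Sigma}[P_m],S^{\Sigma}[Q_n])$, so the empirical term becomes an average $\tfrac{1}{m|\Sigma|}\sum_{i,j}\gamma(\sigma_j x_i)$ over all orbit copies of the samples, and then applies McDiarmid to the two-sample deviation followed by a symmetrization/Rademacher argument (their Lemma~\ref{lemma:mmd-Rademacher}). The improvement factor enters through the bound $\bigl\|\sum_j\phi(\sigma_j x_i)\bigr\|_{\mathcal{H}}^2=\sum_{j,j'}k(\sigma_j x_i,\sigma_{j'}x_i)\le |\Sigma|K+|\Sigma|(|\Sigma|-1)c_{\Sigma,k}K$, which after the $\tfrac{1}{|\Sigma|^2}$ normalization is precisely your $k_\Sigma(x_i,x_i)\le C_{\Sigma,k}^2 K$. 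So both proofs hinge on the same key estimate; you reach it by identifying $[B_{\mathcal{H}}]_\Sigma$ with the unit ball of the invariant sub-RKHS $\mathcal{H}_\Sigma$ with averaged kernel $k_\Sigma$, then splitting by the triangle inequality (which is valid since $\text{MMD}^\Sigma$ is a Hilbert-space norm) and invoking the textbook one-sample MMD bound $E[\text{MMD}^\Sigma(P,P_m)^2]=m^{-1}\bigl(E[k_\Sigma(x,x)]-E[k_\Sigma(x,x')]\bigr)$, Jensen, and McDiarmid. What your route buys is conceptual transparency: group symmetry literally shrinks the RKHS to one with a smaller diagonal $\sup_x k_\Sigma(x,x)$, after which the standard MMD sample-complexity machinery applies verbatim, avoiding the explicit Rademacher symmetrization step. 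Your verification that $k_\Sigma(x,x)$ is constant along orbits (via the conjugation $\sigma\mapsto\sigma_0^{-1}\sigma\sigma_0$) is indeed necessary to lift the $\mathcal{X}_0$-only bound in Assumption~\ref{assump:kernel} to all of $\mathcal{X}$; the paper uses the same fact implicitly when simplifying $\sum_{j\neq l}k(\sigma_j x_i,\sigma_l x_i)$, so you are, if anything, being more careful on that point.
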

\textit{Sketch of the proof.} Based on \cref{thm:sp-gan-main-result}, we use the equality $\text{MMD}^{\Sigma}(P_m,Q_n) = \text{MMD}(S^\Sigma[P_m],S^\Sigma[Q_n])$ to expand the divergence over all the orbit elements. The error bound is controlled in terms of the Rademacher average, whose supremum is attained at some known witness function due to the structure of the RKHS using \cref{lemma:mmd-Rademacher}. Since the Rademacher average is estimated without covering numbers, the rate is independent of the dimension $d$. Then we use the decay of the kernel to obtain the bound.

\begin{remark}
When $\abs{\Sigma} = 1$, the proof is reduced to that in \cite{sriperumbudur2012empirical}.
\end{remark}

\begin{remark}\label{rmk:kernel}
Unlike the cases for the Wasserstein metric and the  Lipschitz-regularized $\alpha$-divergence in \cref{thm:gammaIPM_main} and \cref{thm:f-gamma_main}, the improvement of the sample complexity under group symmetry for MMD (measured by $C_{\Sigma, k}$ in \cref{thm:mmd_main}) depends on not only the group size $|\Sigma|$ but also the kernel $k(x, y)$. For a fixed $\mathcal{X}$ and kernel $k(x, y)$, simply increasing the group size $\abs{\Sigma}$ does not necessarily lead to a reduced sample complexity beyond a certain threshold; see the first four subfigures in \cref{fig:MMD-gaussian}. However, we show in \cref{example:1} below that, by adaptively picking a suitable kernel $k$ depending on the group size $\abs{\Sigma}$,
%, such that $c_k$ is small and does not scale significantly with the group size $\abs{\Sigma}$,
one can obtain an improvement in sample complexity by $C_{\Sigma,k}\approx\frac{1}{\sqrt{\abs{\Sigma}}}$ for arbitrarily large $\abs{\Sigma}$.
\end{remark}

\begin{figure}[htbp]
    \vskip 0.2in
    \begin{center}
\centerline{\includegraphics[width=.4\columnwidth]{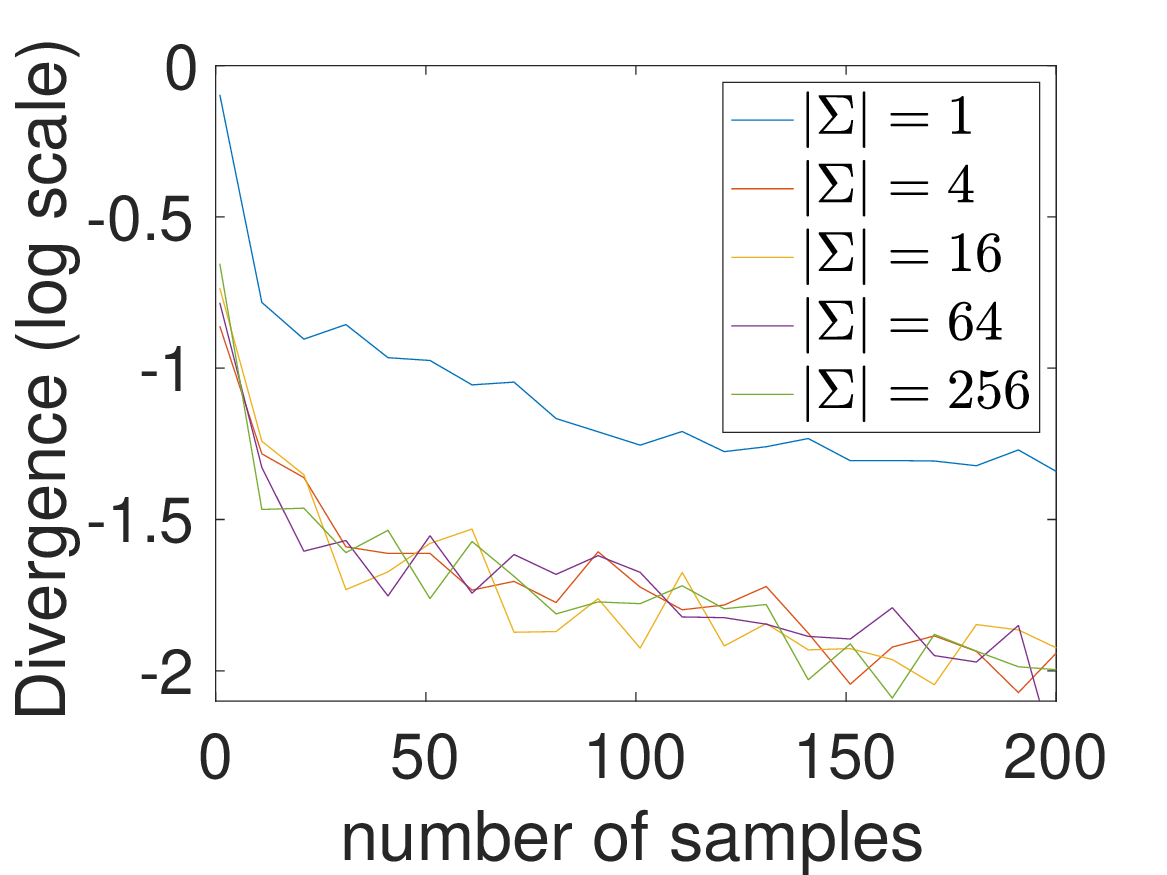}
    \includegraphics[width=.4\columnwidth]{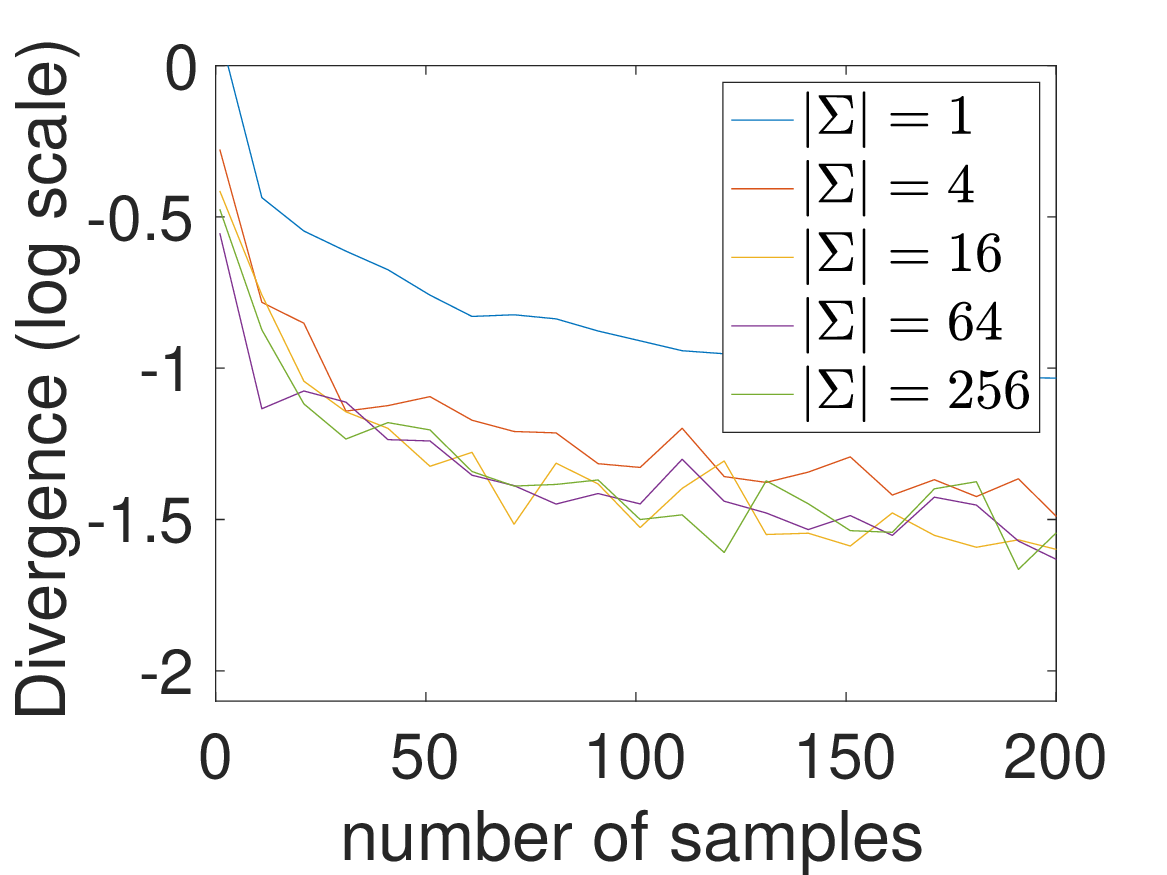}}
\centerline{\includegraphics[width=.4\columnwidth]{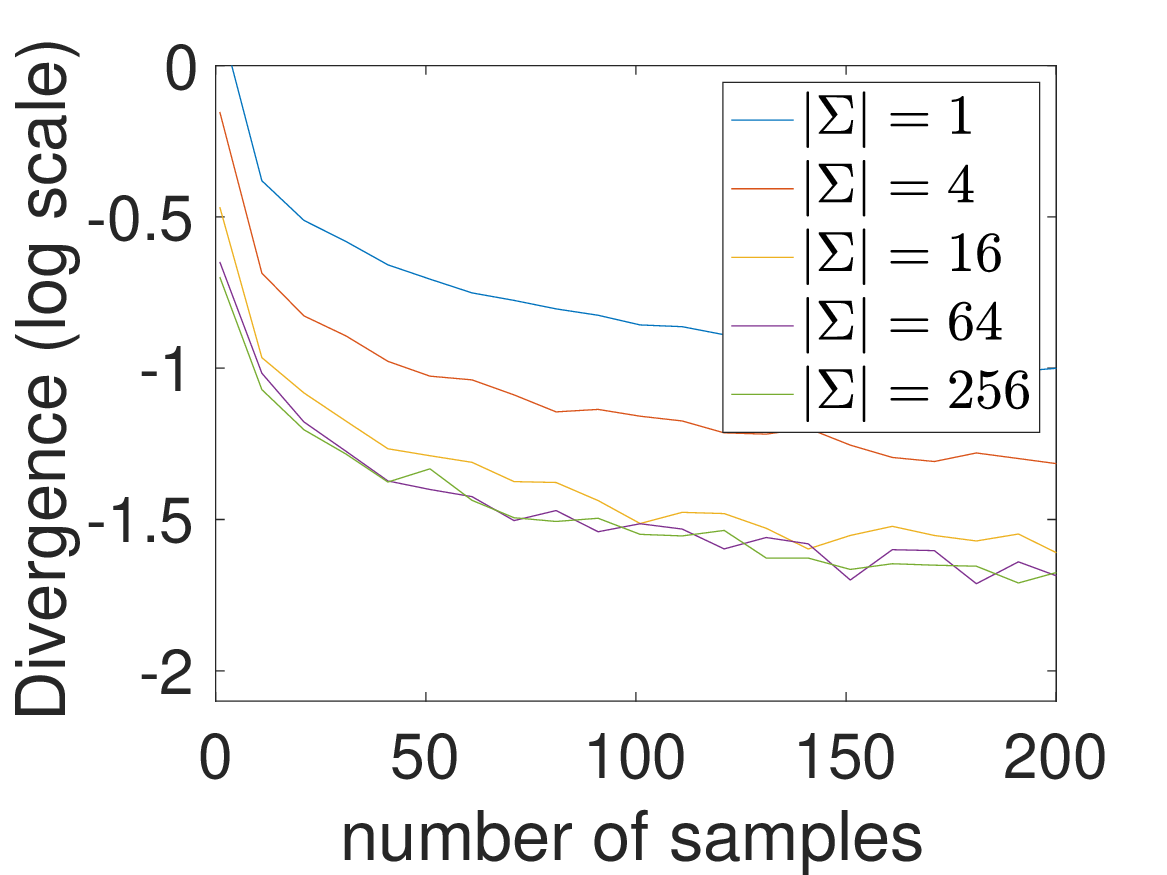}
    \includegraphics[width=.4\columnwidth]{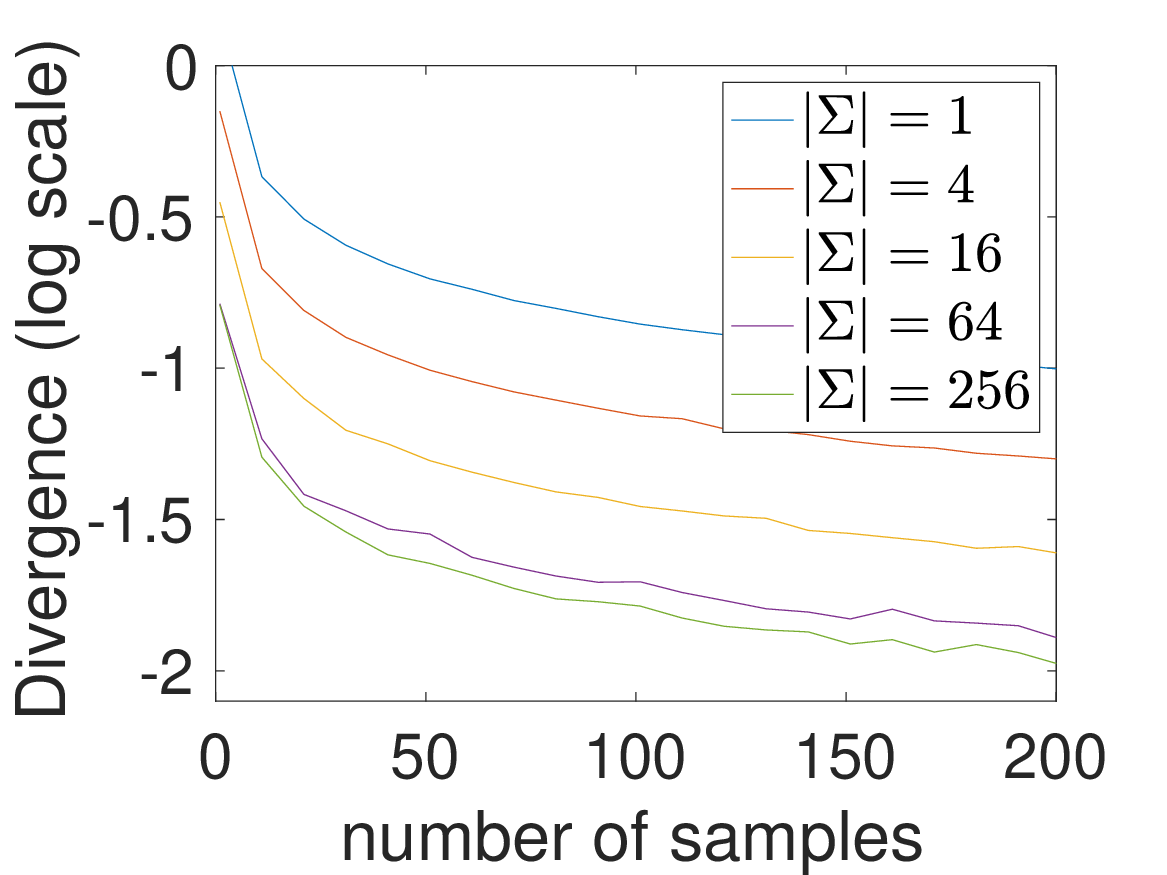}}
\centerline{\includegraphics[width=.4\columnwidth]{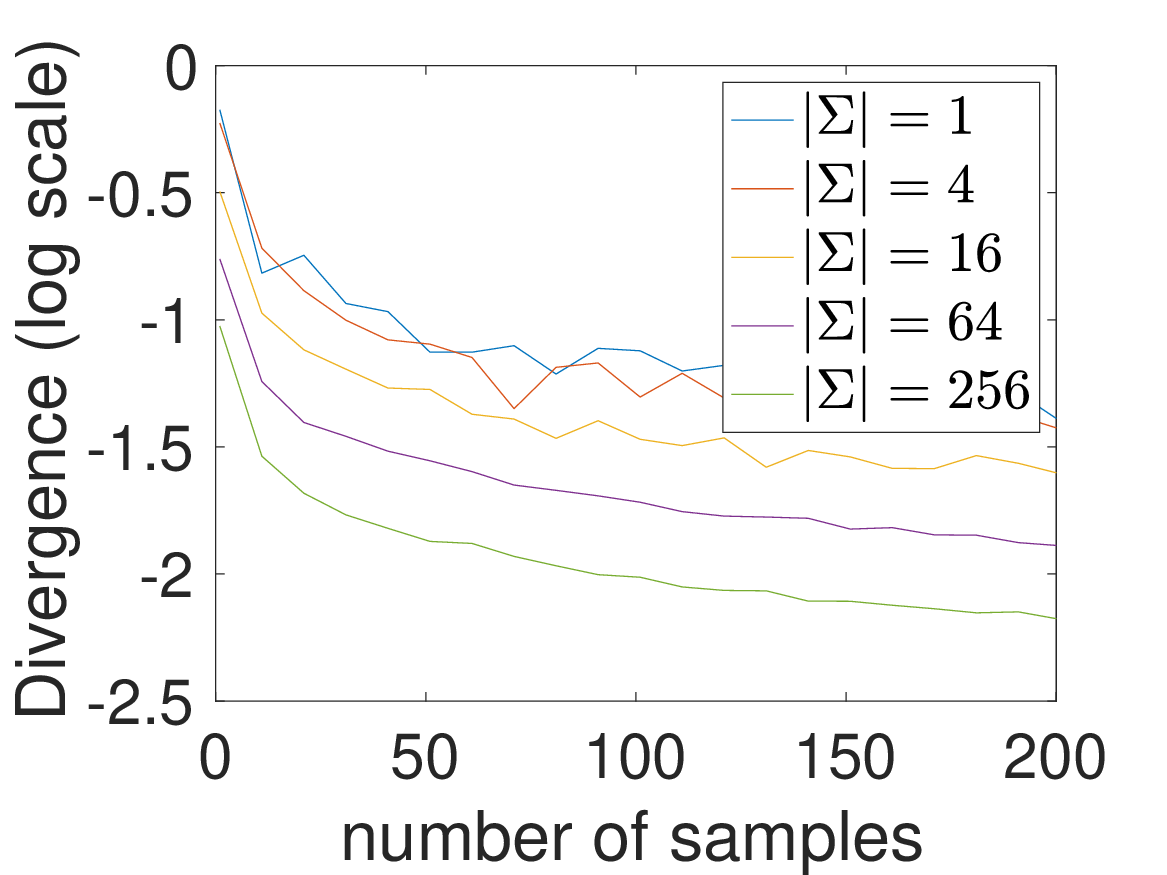}}
    \caption{MMD simulations with Gaussian kernels $k_s(x,y) = e^{-\frac{\norm{x-y}_2^2}{2s^2}}$. From left to right, top to bottom: $s = \frac{2\pi}{1\times 6}$, $s = \frac{2\pi}{4\times6}$, $s = \frac{2\pi}{16\times6}$, $s = \frac{2\pi}{64\times6}$, $s = \frac{2\pi}{6\abs{\Sigma}}$. The first four subfigures (top two rows) show that the Gaussian kernel with a fixed bandwidth $s>0$ satisfies the third condition in Assumption~\ref{assump:kernel} up to a group size of $\abs{\Sigma}=l, l=1, 4, 16, 64$, and thus an improvement of sample complexity of order $C_{\Sigma,k}\approx \abs{\Sigma}^{-1/2}$ persists till $\abs{\Sigma}= l$; when $\abs{\Sigma}>l$, no further reduction in sample complexity can be observed. The last subfigure demonstrates that with an adaptive bandwidth $s$ inversely scaled with $\abs{\Sigma}$, nonstop improvement of the sample complexity can be achieved as the group size $\abs{\Sigma}$ increases. See \cref{example:1} for the detail and explanations. }
	\label{fig:MMD-gaussian}
 \end{center}
 \vskip -0.2in
\end{figure}

\begin{example}\label{example:1}
Let $\mathcal{X} = \{(r\cos\theta,r\sin\theta)\in\mathbb{R}^2: r\in[0,1], \theta\in[0,2\pi)\}$ be the unit disk centered at the origin, and let $k_s(x,y) = e^{-\frac{\norm{x-y}_2^2}{2s^2}}$, $x,y\in\mathcal{X}$, be the Gaussian kernel. Consider the group actions generated by a rotation (with respect to the origin) of $\frac{2 \pi}{l}$, $l = 1, 4, 16, 64, 256$, so that $|\Sigma| = l$ is the group size. The fundamental domain $\CX_0$ under the $\Sigma$-action is $\mathcal{X}_0 = [0,1]\times[0,\frac{2\pi}{l})$ (see \cref{fig:disk} for a visual illustration). We draw samples $x_i\sim P= Q\in \CP_\Sigma(\CX)$ in the following way,
\begin{align*}
    x_i = \sqrt{\xi}_i \left(\cos\left[\frac{2\pi}{l}\theta^{1/3}_i +\eta_i\frac{2\pi}{l}\right], \sin\left[\frac{2\pi}{l}\theta^{1/3}_i +\eta_i\frac{2\pi}{l}\right]\right)
\end{align*}
where $\xi_i$ and $\theta_i$ are i.i.d. uniformly distributed random variables on $[0,1)$ and $\eta_i$ take values over $\{0,\frac{1}{l}, \dots,\frac{l-1}{l}\}$ with equal probabilities. We select the kernel bandwidth $s>0$ in different ways:
\begin{itemize}
    \item Fixed $s$ with changing group size $\abs{\Sigma} = l$. We intuitively follow the ``three-sigma rule'' in the argument direction to pick different $s$. Since the angle of each sector is $\frac{2\pi}{l}$, we select $s = \frac{2\pi}{6l}$, $l = 1, 4, 16, 64$. Smaller bandwidth $s$ corresponds to faster decay of the kernel $k_s(x,y)$, such that for a fixed bandwidth $s = \frac{2\pi}{6l}$, the third condition in Assumption~\ref{assump:kernel} is satisfied with a small $c_k$ for any group $\Sigma$ such that $\abs{\Sigma}\leq l$, i.e., $C_{\Sigma, k} \approx \abs{\Sigma}^{-1/2}$. On the other hand, it is difficult to observe the improvement by further increasing the group size $\abs{\Sigma}$ beyond $\abs{\Sigma}>l$, since the third condition in Assumption~\ref{assump:kernel} is not satisfied with any uniformly small $c$. See the top two rows in \cref{fig:MMD-gaussian} for the results for $s = \frac{2\pi}{l\times 6}$, $l = 1, 4, 16, 64$. Notice that the sample complexity improvement stops right at $|\Sigma| = l$, perfectly matching  our theoretical result \cref{thm:mmd_main}.
    \item $s$ inversely scales with $\abs{\Sigma}$, i.e.,  $s = \frac{2\pi}{6\abs{\Sigma}}$. Unlike the fixed $s$ discusses previously, with these adaptive selections of kernels, we can observed nonstop improvement of the sample complexity as the group size $\abs{\Sigma}$ increases; see the last row of \cref{fig:MMD-gaussian}. This numerical result is explained by the third condition in Assumption~\ref{assump:kernel}; that is, in order to continuously observe the benefit from the increasing group size $\abs{\Sigma}$, we need to have a faster decay in the kernel $k_s$ (i.e., smaller $s$) so that $c_{\Sigma, k_s}$ is uniformly small for all $\abs{\Sigma}$.
\end{itemize}
\end{example}
\begin{remark}
    The bound provided in Theorem~\ref{thm:mmd_main} for the MMD case is almost sharp in the sense that, by a direct calculation, one can obtain that 
    \begin{equation*}
        \frac{E_\CX\text{MMD}^\Sigma(P,P_n)^2}{E_\CX\text{MMD}(P,P_n)^2}\approx C_{\Sigma,k}^2,
    \end{equation*}
    if the Gaussian kernel bandwidth $s\propto \frac{\sqrt{2c_{\Sigma,k}}\pi}{\abs{\Sigma}}$.
\end{remark}

\section{Conclusion and future work}\label{sec:conclusion}
We provide rigorous analysis to quantify the reduction in sample complexity for variational divergence estimations between group-invariant distributions. We obtain a reduction in the error bound by a power of the group size when the group is finite. The exponent on the group size depends on the intrinsic dimension of the support, characterized by the covering number rate for the Wasserstein-1 metric and the Lipschitz-regularized $\alpha$-divergence; that reduction, however, is independent of the ambient dimension for the MMD as in \cite{gretton2006kernel,gretton2007kernel}.

This work also motivates some possible future directions. For the Wasserstein-1 metric in $\mathbb{R}^2$, one could potentially derive a sharper bound in terms of the group size. For the MMD with Gaussian kernels, it is worth investigating how to choose the bandwidth to make as much use of the group structure as possible. Further applications of the theories on machine learning, such as neural generative models or neural estimations of divergence under symmetry, are also expected.

\section{Proofs}\label{appendix:proofs}
In this section, we provide detailed statements of the theorems introduced in \cref{sec:sample_complexity} as well as their proofs.
\subsection{Wasserstein-1 metric}\label{appendix:gammaIPM}
\begin{assumption}\label{assumption:sapiro}
    Let $\mathcal{X} = \Sigma\times \mathcal{X}_0\subset\mathbb{R}^d$. Assume that there exists some $\delta_0>0$ such that\\
    \begin{enumerate}[label={\arabic*)}]
        \item $\norm{\sigma(x)-\sigma'(x')}_2>2\delta_0$, $\forall x,x'\in\mathcal{X}_0, \sigma\neq\sigma'\in\Sigma$; and
        \item $\norm{\sigma(x)-\sigma(x')}_2\geq\norm{x-x'}_2$, $\forall x,x'\in\mathcal{X}_0, \sigma\in\Sigma$,
    \end{enumerate}
    where $\|\cdot\|_2$ is the Euclidean norm on $\R^d$.
\end{assumption}
\cref{example:wss1d} provides a simple example when this assumption holds.
\begin{theorem}\label{thm:gammaIPM}
Let $\mathcal{X} = \Sigma\times\mathcal{X}_0$ be a subset of $\mathbb{R}^D$ satisfying the conditions in Assumption~\ref{assumption:sapiro}. Assume that $\mathcal{N}(\mathcal{X},\delta)\lesssim \delta^{-d}$ for sufficiently small $\delta$. Suppose $P$ and $Q$ are $\Sigma$-invariant probability measures on $\mathcal{X}$. 

1) If $d\geq 2$, then for any $s>0, \epsilon>0$ and $m,n$ sufficiently large, we have with probability at least $1-\epsilon$,
\begin{align*}
\abs{W(P,Q)-W^{\Sigma}(P_m,Q_n)}&\leq\left(8+\frac{24}{(\frac{d+s}{2}-1)}\right)\left[\left(\frac{9D_{\mathcal{X},L}^2}{\abs{\Sigma}m}\right)^{\frac{1}{d+s}}+\left(\frac{9D_{\mathcal{X},L}^2}{\abs{\Sigma}n}\right)^{\frac{1}{d+s}}\right]\\
&\quad+ \bar{D}_{\mathcal{X}_0, L}\left(\frac{24}{\sqrt{m}}+ \frac{24}{\sqrt{n}}\right)+L\cdot \text{\normalfont diam}(\mathcal{X}_0)\sqrt{\frac{2(m+n)}{mn}\ln \frac{1}{\epsilon}},
\end{align*}
where $D_{\mathcal{X}, L}$ depends only on $\mathcal{X}$ and $L$; $\bar{D}_{\mathcal{X}_0, L}$ depends only on $\mathcal{X}_0$ and $L$, and is increasing in $\mathcal{X}_0$, i.e., $\bar{D}_{A_1, L}\le \bar{D}_{A_2, L}$ for $A_1\subset A_2$;\\
2) If $d=1$, then for any $\epsilon>0$ and $m,n$ sufficiently large, we have with probability at least $1-\epsilon$,
\begin{align*}
\abs{W(P,Q)-W^{\Sigma}(P_m,Q_n)}&\leq cL\cdot\text{\normalfont diam}(\mathcal{X}_0) \left(\frac{1}{\sqrt{m}} + \frac{1}{\sqrt{n}}\right) +L\cdot \text{\normalfont diam}(\mathcal{X}_0)\sqrt{\frac{2(m+n)}{mn}\ln \frac{1}{\epsilon}},
\end{align*}
where $c>0$ is an absolute constant independent of $\mathcal{X}$ and $\mathcal{X}_0$.
\end{theorem}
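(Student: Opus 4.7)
The plan is to bound the expected deviation $\mathbb{E}|W^\Sigma(P,Q) - W^\Sigma(P_m,Q_n)|$ first and then upgrade to a high-probability bound via McDiarmid. Since Result~\ref{thm:sp-gan-main-result} gives $W(P,Q)=W^\Sigma(P,Q)$ for $\Sigma$-invariant measures, I would start with the triangle inequality
\begin{equation*}
|W(P,Q)-W^\Sigma(P_m,Q_n)|\leq \sup_{\gamma\in[\mathrm{Lip}_L(\CX)]_\Sigma}|E_{P_m}\gamma - E_P\gamma| + \sup_{\gamma\in[\mathrm{Lip}_L(\CX)]_\Sigma}|E_{Q_n}\gamma-E_Q\gamma|,
\end{equation*}
so it suffices to bound one term. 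For any $\Sigma$-invariant $\gamma$ one has $\gamma(x)=\gamma(T_0(x))$, so the empirical averages depend only on the images of the samples in $\CX_0$ under the quotient map $T_0$ of~\eqref{def:quotientmap}. By $\Sigma$-invariance of $P$, the pushforwards $T_0(x_i)$ are i.i.d.\ samples from $P_{\CX_0}$, so the supremum reduces to one over $\mathrm{Lip}_L(\CX_0)$ acting on samples from $P_{\CX_0}$ and its empirical version on $\CX_0$.

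Next, I would exploit the translation invariance $\gamma\mapsto\gamma+\nu$ of the Wasserstein objective to further restrict the supremum to $\mathcal{F}_0=\{\gamma\in\mathrm{Lip}_L(\CX_0):\|\gamma\|_\infty\leq M\}$ with $M=L\cdot\mathrm{diam}(\CX_0)$; this is where \cref{lemma:infinitybound} enters. A standard symmetrization argument then bounds the expected supremum by twice the Rademacher complexity of $\mathcal{F}_0$, to which I would apply Dudley's entropy integral to get
\begin{equation*}
\mathbb{E}\sup_{\gamma\in\mathcal{F}_0}\bigl|E_{P_m}\gamma - E_{P_{\CX_0}}\gamma\bigr|\leq \inf_{\alpha>0}\!\left\{8\alpha+\frac{24}{\sqrt{m}}\int_\alpha^M\!\sqrt{\ln\mathcal{N}(\mathcal{F}_0,\delta,\|\cdot\|_\infty)}\,\mathrm{d}\delta\right\}.
\end{equation*}

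The heart of the argument is controlling the metric entropy of $\mathcal{F}_0$ and extracting the $|\Sigma|$ factor. Using \cref{lemma:coverbycover}, I would bound $\ln\mathcal{N}(\mathcal{F}_0,\delta,\|\cdot\|_\infty)\lesssim \mathcal{N}(\CX_0,c\delta/L)\ln(cM/\delta)$. The key geometric input is \cref{lemma:ratioofcovering}, whose hypothesis is exactly Assumption~\ref{assumption:sapiro}: the well-separation condition 1) guarantees that at scales $\delta<\delta_0$, disjoint $\delta$-balls around a cover of $\CX_0$ translate into $|\Sigma|$ disjoint $\delta$-balls covering $\CX$ (using condition 2) to preserve Euclidean radii), yielding $\mathcal{N}(\CX_0,\delta)\leq \mathcal{N}(\CX,\delta)/|\Sigma|$ and so $\mathcal{N}(\CX_0,\delta)=O((\delta|\Sigma|^{1/d})^{-d})$ for small $\delta$. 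For $d\geq 2$ the Dudley integral is singular at $0$; optimizing over $\alpha$ (so that $\alpha \sim m^{-1/(d+s)}|\Sigma|^{-1/(d+s)}$) produces the rate in Theorem~\ref{thm:gammaIPM_main}(1). For $d=1$, the entropy integral is integrable at $0$, so taking $\alpha\to 0$ and bounding $\mathcal{N}(\CX_0,\delta)\lesssim \mathrm{diam}(\CX_0)/\delta$ directly gives the $\mathrm{diam}(\CX_0)\,m^{-1/2}$ rate.

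Finally, to upgrade from expectation to high probability, I would apply McDiarmid's inequality to $(x_1,\ldots,x_m,y_1,\ldots,y_n)\mapsto W^\Sigma(P_m,Q_n)$: changing a single $x_i$ perturbs the empirical supremum by at most $2L\cdot\mathrm{diam}(\CX_0)/m$ (since the test functions lie in $\mathcal{F}_0$), and similarly for $y_j$, yielding the $\sqrt{2(m+n)\ln(1/\epsilon)/(mn)}\cdot L\cdot\mathrm{diam}(\CX_0)$ tail correction. The main technical obstacle is \cref{lemma:ratioofcovering}, i.e., verifying that the well-separation assumption actually buys a full factor of $|\Sigma|$ in the covering number at relevant scales; everything else is bookkeeping of constants and optimization of $\alpha$ in Dudley's integral.
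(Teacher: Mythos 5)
Your proposal matches the paper's proof almost step-for-step: triangle inequality to split into two one-sample deviations, reduction to $\mathrm{Lip}_L(\CX_0)$ via the quotient map $T_0$ and $\Sigma$-invariance, restriction to $\mathcal{F}_0$ via \cref{lemma:infinitybound} using the shift invariance, symmetrization plus Dudley's entropy integral (\cref{lemma:Dudley}), the covering-number reduction via \cref{lemma:coverbycover} and \cref{lemma:ratioofcovering} to extract the $|\Sigma|$ factor, optimization of $\alpha$ for $d\ge 2$, and McDiarmid with increments bounded by $2L\,\mathrm{diam}(\CX_0)/m$. The only cosmetic difference is in the $d=1$ case: you let $\alpha\to 0$ and bound $\mathcal{N}(\CX_0,\delta)\lesssim\mathrm{diam}(\CX_0)/\delta$ inside \cref{lemma:coverbycover} (which yields an extra $\sqrt{\ln(1/\delta)}$ factor that is still integrable), whereas the paper invokes the sharper covering bound $\ln\mathcal{N}(\mathcal{F}_0,\delta,\|\cdot\|_\infty)\le cL\,\mathrm{diam}(\CX_0)/\delta$ for Lipschitz functions on an interval and keeps a small positive $\alpha$ at the optimum; both give the same $\mathrm{diam}(\CX_0)\cdot m^{-1/2}$ rate with an absolute constant.
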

Before proving this theorem, we have the following lemmas.

\begin{lemma}\label{lemma:symmetrizeGamma}
Suppose the $\Sigma$-actions on $\mathcal{X}$ are $1$-Lipschitz, i.e., $\norm{\sigma x-\sigma y}_2\leq\norm{x-y}_2$ for any $x,y\in\mathcal{X}$ and $\sigma\in\Sigma$, then we have $S_\Sigma[\Gamma]\subset\Gamma$, where $\Gamma = \text{Lip}_L(\mathcal{X})$.
\end{lemma}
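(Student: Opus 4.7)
The plan is direct: unpack the definition of $S_\Sigma[\gamma]$, apply the Lipschitz bound on $\gamma$, and then use the $1$-Lipschitz assumption on the group action to eliminate the $\sigma$ dependence. Since the lemma is essentially an exercise in chaining three inequalities, there is no substantive obstacle; the only thing to be careful about is that the Haar integral (or, equivalently, the finite average under the standing assumption $|\Sigma|<\infty$) preserves the Lipschitz constant $L$ rather than inflating it.

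First, I would fix $\gamma\in\Gamma=\text{Lip}_L(\CX)$ and arbitrary $x,y\in\CX$. By the definition \eqref{eq:symmetrization_function} of the symmetrization operator,
\begin{equation*}
\bigl| S_\Sigma[\gamma](x)-S_\Sigma[\gamma](y)\bigr|
=\left|\int_\Sigma \bigl(\gamma(\sigma x)-\gamma(\sigma y)\bigr)\,\mu_\Sigma(d\sigma)\right|
\le \int_\Sigma \bigl|\gamma(\sigma x)-\gamma(\sigma y)\bigr|\,\mu_\Sigma(d\sigma).
\end{equation*}

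Next, using the $L$-Lipschitz property of $\gamma$ pointwise in $\sigma$, the integrand is bounded by $L\,\|\sigma x-\sigma y\|_2$; invoking the hypothesis that $\theta_\sigma$ is $1$-Lipschitz for every $\sigma\in\Sigma$ gives $\|\sigma x-\sigma y\|_2\le\|x-y\|_2$, so that
\begin{equation*}
\bigl| S_\Sigma[\gamma](x)-S_\Sigma[\gamma](y)\bigr|
\le L\,\|x-y\|_2\int_\Sigma \mu_\Sigma(d\sigma)
= L\,\|x-y\|_2,
\end{equation*}
since $\mu_\Sigma$ is a probability measure. Hence $S_\Sigma[\gamma]\in\text{Lip}_L(\CX)=\Gamma$, proving $S_\Sigma[\Gamma]\subset\Gamma$. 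Measurability of $S_\Sigma[\gamma]$ follows from Fubini/Tonelli together with the joint measurability of $(\sigma,x)\mapsto\gamma(\sigma x)$, which is standard and in the finite-group setting reduces to a finite sum of measurable functions.
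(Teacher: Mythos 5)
Your proof is correct and follows essentially the same chain of inequalities as the paper's: unfold the definition of $S_\Sigma[\gamma]$, bring the absolute value inside the average, apply the $L$-Lipschitz bound on $\gamma$, and then the $1$-Lipschitz bound on the group action. The only cosmetic difference is that you write the Haar integral while the paper writes the finite sum (equivalent here since $\mu_\Sigma$ is uniform on the finite group), and you add a remark on measurability that the paper omits.
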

\begin{proof}
For any $x,y\in\mathcal{X}$ and $f\in\Gamma$, we have
\begin{align*}
    \abs{S_\Sigma(f)(x)-S_\Sigma(f)(y)}
    &= \abs{\frac{1}{\abs{\Sigma}}\sum_{\sigma\in\Sigma}f\left(\sigma x\right) - \frac{1}{\abs{\Sigma}}\sum_{\sigma\in\Sigma}f\left(\sigma y\right)}\\
    &\leq \frac{1}{\abs{\Sigma}}\sum_{\sigma\in\Sigma}\abs{f\left(\sigma x\right) - f\left(\sigma y\right)}\\
    &\leq \frac{1}{\abs{\Sigma}}\sum_{\sigma\in\Sigma}L\norm{\sigma x - \sigma y}_2\\
    &\leq \frac{1}{\abs{\Sigma}}\sum_{\sigma\in\Sigma}L\norm{x - y}_2\\
    &=L\norm{x - y}_2.
\end{align*}
Therefore, we have $S_\Sigma(f)\in\Gamma$.
\end{proof}

\begin{lemma}\label{lemma:infinitybound}
For any $\gamma\in\text{Lip}_L(\mathcal{X}_0)$, there exists $\nu\in\mathbb{R}$, such that $\norm{\gamma+\nu}_\infty\leq L\cdot\text{\normalfont diam}(\mathcal{X}_0)$. 
\end{lemma}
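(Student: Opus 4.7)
The plan is to exploit the fact that an $L$-Lipschitz function on a bounded set has oscillation controlled by the diameter: since translation by a constant $\nu$ preserves the Lipschitz property and only shifts the range of $\gamma$, we have full freedom to recenter $\gamma$ to minimize the $L^\infty$ norm. The crux is just to show that the oscillation of $\gamma$ is at most $L\cdot\text{diam}(\mathcal{X}_0)$, which will make any centering choice work.

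Concretely, I would fix an arbitrary reference point $x_0\in\mathcal{X}_0$ and set $\nu \coloneqq -\gamma(x_0)$. Then for every $x\in\mathcal{X}_0$, by the $L$-Lipschitz property,
\[
|\gamma(x)+\nu| = |\gamma(x)-\gamma(x_0)| \le L\,\|x-x_0\|_2 \le L\cdot\text{diam}(\mathcal{X}_0),
\]
and taking the supremum over $x$ yields the claim. Alternatively, one could choose the more symmetric shift $\nu = -\tfrac{1}{2}(\sup\gamma+\inf\gamma)$, which gives the sharper bound $\|\gamma+\nu\|_\infty \le \tfrac{1}{2}L\cdot\text{diam}(\mathcal{X}_0)$; the looser constant in the statement suffices for its later use in bounding the metric entropy of the function class $\mathcal{F}_0$ in the proof of Theorem~\ref{thm:gammaIPM}.

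There is essentially no obstacle here: the lemma is a one-line consequence of the Lipschitz estimate applied to $\gamma(x)-\gamma(x_0)$, combined with the definition of the diameter. The only thing worth noting is that $\text{diam}(\mathcal{X}_0)$ is finite because $\mathcal{X}_0\subset\mathcal{X}\subset\mathbb{R}^d$ is bounded by the standing assumption in Section~\ref{sec:assumptions_notations}, and that $\nu$ may depend on $\gamma$, which is consistent with the usage of this lemma inside the supremum over $\gamma$ in the variational formula~\eqref{eq:Wasserstein}.
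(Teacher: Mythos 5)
Your proof is correct and, in fact, takes a cleaner route than the paper's. You give a direct construction: fix any $x_0\in\mathcal{X}_0$, set $\nu=-\gamma(x_0)$, and the bound $|\gamma(x)-\gamma(x_0)|\le L\|x-x_0\|_2\le L\cdot\text{diam}(\mathcal{X}_0)$ is immediate. The paper instead argues indirectly: assuming (WLOG) $\sup_{x}\gamma(x)>L\cdot\text{diam}(\mathcal{X}_0)$, it deduces $\inf_x\gamma(x)>0$ and then exhibits a shift $\nu=-\tfrac{1}{2}\inf_x\gamma(x)$ that strictly decreases $\|\gamma+\nu\|_\infty$. That argument shows a violating $\gamma$ is never optimal, but it does not by itself produce a $\nu$ achieving the bound without an additional (elementary) step; your version avoids this by just writing the desired $\nu$ down. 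Your remark about the symmetric centering $\nu=-\tfrac{1}{2}(\sup\gamma+\inf\gamma)$ yielding the sharper constant $\tfrac{1}{2}L\cdot\text{diam}(\mathcal{X}_0)$ is also correct, though, as you note, unnecessary for the downstream use in Theorem~\ref{thm:gammaIPM}.
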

\begin{proof}
Suppose $\gamma\in\text{Lip}_L(\mathcal{X}_0)$ and $\norm{\gamma(x)}_\infty > L\cdot\text{diam}(\mathcal{X}_0)$. Without loss of generality, we can assume $\sup_{x\in\mathcal{X}_0}\gamma(x)>L\cdot\text{diam}(\mathcal{X}_0)$. Since $\gamma$ is $L$-Lipschitz on $\mathcal{X}_0$, we have $\sup_{x\in\mathcal{X}_0}\gamma(x)- \inf_{x\in\mathcal{X}_0}\gamma(x)\leq L\cdot\text{diam}(\mathcal{X}_0)$, so that 
\[
\inf_{x\in\mathcal{X}_0}\gamma(x)\geq \sup_{x\in\mathcal{X}_0}\gamma(x)-L\cdot\text{diam}(\mathcal{X}_0)>0.
\]
Hence we can select $\nu=-\frac{\inf_{x\in\mathcal{X}_0}\gamma(x)}{2}$, so that $\norm{\gamma+\nu}_\infty<\norm{\gamma}_\infty$.
\end{proof}

We provide a variant of the Dudley's entropy integral as well as its proof for completeness.
\begin{lemma}\label{lemma:Dudley}
    Suppose $\mathcal{F}$ is a family of functions mapping the metric space $(\mathcal{X},\rho)$ to $[-M,M]$ for some $M>0$. Also assume that $0\in\mathcal{F}$ and $\mathcal{F} = -\mathcal{F}$. Let $\xi=\{\xi_1,\dots,\xi_m\}$ be a set of independent random variables that take values on $\{-1,1\}$ with equal probabilities, $i = 1,\dots,m$. $x_1,x_2,\dots,x_m\in\mathcal{X}$. Then we have 
    \begin{equation*}
    E_{\xi}\sup_{f\in\mathcal{F}}\abs{\frac{1}{m}\sum_{i=1}^m\xi_if(x_i)}\leq\inf_{\alpha>0} 4\alpha+\frac{12}{\sqrt{m}}\int_{\alpha}^{M}\sqrt{\ln\mathcal{N(\mathcal{F},\delta,\norm{\cdot}_{\infty})}}\diff{\delta}.
    \end{equation*}
\end{lemma}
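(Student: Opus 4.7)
The plan is to carry out a standard Dudley-style chaining argument, combining a dyadic sequence of $\|\cdot\|_\infty$-covers of $\mathcal{F}$ with Massart's finite-class maximal inequality, and then converting the resulting dyadic sum into the integral shown in the statement.

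First, fix $\alpha>0$ and set $\delta_j = M 2^{-j}$ for $j\geq 0$. Let $C_j\subset\mathcal{F}$ be a minimal $\delta_j$-cover in $\|\cdot\|_\infty$, of cardinality $N_j := \mathcal{N}(\mathcal{F},\delta_j,\|\cdot\|_\infty)$. Because $0\in\mathcal{F}$ and $\|f\|_\infty\leq M$ for every $f\in\mathcal{F}$, one may take $C_0 = \{0\}$. For each $f\in\mathcal{F}$, let $\pi_j(f)\in C_j$ be a nearest element, so $\|f-\pi_j(f)\|_\infty\leq\delta_j$ and $\pi_0(f)=0$. Let $j_0$ be the largest integer with $\delta_{j_0}\geq\alpha$, which forces $\alpha\leq\delta_{j_0}<2\alpha$, and telescope
\[
f = \bigl(f-\pi_{j_0}(f)\bigr) + \sum_{j=1}^{j_0}\bigl(\pi_j(f) - \pi_{j-1}(f)\bigr).
\]

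Next, plug this decomposition into $E_\xi\sup_f|\cdot|$. The residual contribution is bounded by $\delta_{j_0}<2\alpha$ uniformly in $\xi$, since $|\xi_i|=1$ and $\|f-\pi_{j_0}(f)\|_\infty\leq\delta_{j_0}$. For each fixed $j$, the vectors $\bigl((\pi_j(f)-\pi_{j-1}(f))(x_i)\bigr)_{i=1}^m$ range over at most $N_jN_{j-1}\leq N_j^2$ distinct values and satisfy $\|\cdot\|_2\leq\sqrt{m}(\delta_j+\delta_{j-1})=3\sqrt{m}\delta_j$. The assumption $\mathcal{F}=-\mathcal{F}$ ensures the difference class is symmetric, so one may drop the absolute value and invoke Massart's finite-class maximal inequality to obtain
\[
E_\xi\sup_{f\in\mathcal{F}}\abs{\tfrac{1}{m}\textstyle\sum_i \xi_i\bigl(\pi_j(f)-\pi_{j-1}(f)\bigr)(x_i)} \leq \frac{3\delta_j\sqrt{2\ln N_j^2}}{\sqrt{m}} = \frac{6\delta_j\sqrt{\ln N_j}}{\sqrt{m}}.
\]

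Finally, convert the dyadic sum to an integral. Using $\delta_j = 2(\delta_j-\delta_{j+1})$ together with the monotonicity $\sqrt{\ln\mathcal{N}(\mathcal{F},\delta,\|\cdot\|_\infty)}\geq\sqrt{\ln N_j}$ on $\delta\in[\delta_{j+1},\delta_j]$, one obtains
\[
\sum_{j=1}^{j_0}\delta_j\sqrt{\ln N_j} \leq 2\int_{\delta_{j_0+1}}^{M}\sqrt{\ln\mathcal{N}(\mathcal{F},\delta,\|\cdot\|_\infty)}\,d\delta \leq 2\int_{\alpha/2}^{M}\sqrt{\ln\mathcal{N}(\mathcal{F},\delta,\|\cdot\|_\infty)}\,d\delta,
\]
since $\delta_{j_0+1}\geq\alpha/2$. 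Combining the residual and chain bounds gives $E_\xi\sup_f|\cdot|\leq 2\alpha + \frac{12}{\sqrt{m}}\int_{\alpha/2}^M\sqrt{\ln\mathcal{N}(\mathcal{F},\delta,\|\cdot\|_\infty)}\,d\delta$ for every $\alpha>0$, and the statement follows by replacing $\alpha$ with $2\alpha$ and then taking the infimum. I expect the main obstacle to be the bookkeeping at this last step: the inequality $\sqrt{\ln\mathcal{N}(\mathcal{F},\delta,\|\cdot\|_\infty)}\geq\sqrt{\ln N_j}$ is only valid on $[\delta_{j+1},\delta_j]$, one step below the scale at which $N_j$ actually appears in the chain, which is what forces the lower integration limit down to $\delta_{j_0+1}\geq\alpha/2$ and necessitates the final rescaling producing the constant $4$ in front of $\alpha$.
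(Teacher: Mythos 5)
Your proof is correct and follows essentially the same route as the paper's: a dyadic $\|\cdot\|_\infty$-chaining decomposition anchored at the zero function, Massart's finite-class bound applied to the link classes with cardinality $N_j^2$ and $\ell_2$-radius $3\sqrt{m}\delta_j$, and a conversion of the resulting geometric sum into the entropy integral. The only cosmetic difference is that you obtain $2\alpha + \tfrac{12}{\sqrt{m}}\int_{\alpha/2}^M$ and rescale $\alpha\mapsto 2\alpha$, whereas the paper chooses the truncation level $N$ so that the $4\alpha$ emerges directly.
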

The proof of \cref{lemma:Dudley} is standard using the dyadic path., e.g. see the proof of Lemma A.5. in \cite{bartlett2017spectrally}.
\begin{proof}\label{proof:Dudley}

    Let $N$ be an arbitrary positive integer and $\delta_k = M2^{-(k-1)}$, $k=1,\dots,N$. Let $V_k$ be the cover achieving $\mathcal{N}(\mathcal{F},\delta_k,\norm{\cdot}_\infty)$ and denote $\abs{V_k} = \mathcal{N}(\mathcal{F},\delta_k,\norm{\cdot}_\infty)$. For any $f\in\mathcal{F}$, let $\pi_k(f)\in V_k$, such that $\norm{f-\pi_k(f)}_\infty\leq\delta_k$. We have
    \begin{align*}      &E_\xi\sup_{f\in\mathcal{F}}\abs{\frac{1}{m}\sum_{i=1}^m\xi_i f(x_i)}\\
    &\leq E_\xi\sup_{f\in\mathcal{F}}\abs{\frac{1}{m}\sum_{i=1}^m\xi_i\left( f(x_i)-\pi_N(f)(x_i)\right)} + \sum_{j=1}^{N-1}E_\xi\sup_{f\in\mathcal{F}}\abs{\frac{1}{m}\sum_{i=1}^m\xi_i\left( \pi_{j+1}(f)(x_i)-\pi_j(f)(x_i)\right)}\\
    &\quad + E_\xi\sup_{f\in\mathcal{F}}\abs{\frac{1}{m}\sum_{i=1}^m\xi_i \pi_1(f)(x_i)}.
    \end{align*}
    The first on the right hand side is bounded by $\delta_N$. Note that we can choose $V_1=\{0\}$, so that $\pi_1(f)$ is the zero function. For each $j$, let $W_j = \{\pi_{j+1}(f)-\pi_j(f):f\in\mathcal{F}\}$. We have $\abs{W_j}\leq\abs{V_{j+1}}\abs{V_j}\leq\abs{V_{j+1}}^2$. Then we have
    \[
    \sum_{j=1}^{N-1}E_\xi\sup_{f\in\mathcal{F}}\abs{\frac{1}{m}\sum_{i=1}^m\xi_i\left( \pi_{j+1}(f)(x_i)-\pi_j(f)(x_i)\right)} = \sum_{j=1}^{N-1}E_\xi\sup_{w\in W_j}\abs{\frac{1}{m}\sum_{i=1}^m\xi_i w(x_i)}.
    \]
    In addition, we have
    \begin{align*}
        &\sup_{w\in W_j}\sqrt{\sum_{i=1}^m w(x_i)^2}\\
        &= \sup_{f\in\mathcal{F}}\sqrt{\sum_{i=1}^m\left(\pi_{j+1}(f)(x_i)-\pi_j(f)(x_i)\right)^2}\\
        &\leq \sup_{f\in\mathcal{F}}\sqrt{\sum_{i=1}^m\left(\pi_{j+1}(f)(x_i)-f(x_i)\right)^2} + \sup_{f\in\mathcal{F}}\sqrt{\sum_{i=1}^m\left(f(x_i)-\pi_j(f)(x_i)\right)^2}\\
        &\leq \sqrt{m\cdot\delta_{j+1}^2} + \sqrt{m\cdot\delta_{j}^2}\\
        &=\sqrt{m}(\delta_{j+1}+\delta_j)\\
        &=3\sqrt{m}\delta_{j+1}.
    \end{align*}
By the Massart finite class lemma (see, e.g. \cite{mohri2018foundations}), we have
\begin{align*}
    E_\xi\sup_{w\in W_j}\abs{\frac{1}{m}\sum_{i=1}^m\xi_i w(x_i)}\leq\frac{3\sqrt{m}\delta_{j+1}\sqrt{2\ln\abs{W_j}}}{m}\leq\frac{6\delta_{j+1}\sqrt{\ln\abs{V_{j+1}}}}{\sqrt{m}}.
\end{align*}
Therefore,
\begin{align*}
    E_\xi\sup_{f\in\mathcal{F}}\abs{\frac{1}{m}\sum_{i=1}^m\xi_i f(x_i)} &\leq \delta_N +\frac{6}{\sqrt{m}}\sum_{j=1}^{N-1}\delta_{j+1}\sqrt{\ln\mathcal{N}(\mathcal{F},\delta_{j+1},\norm{\cdot}_{\infty})}\\
    &\leq \delta_N +\frac{12}{\sqrt{m}}\sum_{j=1}^{N}(\delta_j-\delta_{j+1})\sqrt{\ln\mathcal{N}(\mathcal{F},\delta_{j},\norm{\cdot}_{\infty})}\\
    &\leq\delta_N + \frac{12}{\sqrt{m}}\int_{\delta_{N+1}}^M\sqrt{\ln\mathcal{N}(\mathcal{F},\delta,\norm{\cdot}_{\infty})}\diff{\delta}.
\end{align*}
Finally, select any $\alpha\in(0,M)$ and let $N$ be the largest integer with $\delta_{N+1}>\alpha$, (implying $\delta_{N+2}\leq \alpha$ and $\delta_N=4\delta_{N+2}\leq 4\alpha$), so that
\[
\delta_N + \frac{12}{\sqrt{m}}\int_{\delta_{N+1}}^M\sqrt{\ln\mathcal{N}(\mathcal{F},\delta_{j},\norm{\cdot}_{\infty})}\diff{\delta}\leq 4\alpha + \frac{12}{\sqrt{m}}\int_{\alpha}^M\sqrt{\ln\mathcal{N}(\mathcal{F},\delta,\norm{\cdot}_{\infty})}\diff{\delta}.
\]
\end{proof}

We can easily extend Lemma 6 in \cite{gottlieb2017efficient} to the following lemma by meshing on the range $[-M,M]$ rather than $[0,1]$.
\begin{lemma}\label{lemma:coverbycover}
    Let $\mathcal{F}$ be the family of $L$-Lipschitz functions mapping the metric space $(\mathcal{X},\norm{\cdot}_2)$ to $[-M,M]$ for some $M>0$. Then we have 
    \begin{equation*}
        \mathcal{N}(\mathcal{F},\delta,\norm{\cdot}_{\infty})\leq(\frac{c_1M}{\delta})^{\mathcal{N}(\mathcal{X},\frac{c_2\delta}{L})},
    \end{equation*}
    where $c_1\geq1$ and $c_2\leq1$ are some absolute constants not depending on $\mathcal{X}$, $M$, and $\delta$.
\end{lemma}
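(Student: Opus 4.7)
The plan is to follow the standard discretization route, adapting Lemma 6 of \cite{gottlieb2017efficient} from range $[0,1]$ to range $[-M,M]$ by rescaling the mesh. First I would fix a minimum $\delta'$-cover $\{x_1,\dots,x_N\}\subset\mathcal{X}$ of cardinality $N=\mathcal{N}(\mathcal{X},\delta')$, with $\delta'=c_2\delta/L$ to be pinned down at the end. Second, I would discretize the range with a uniform mesh $V\subset[-M,M]$ of spacing $\eta$, so that $\abs{V}\leq 2M/\eta+1$ and every value in $[-M,M]$ lies within $\eta/2$ of some point of $V$.

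For each $f\in\mathcal{F}$, I would define its \emph{signature} $\pi(f)=(v_1,\dots,v_N)\in V^N$ by letting $v_i$ be a nearest mesh point to $f(x_i)$ (ties broken arbitrarily). The number of signatures is at most $\abs{V}^N\leq(2M/\eta+1)^N$, and from each \emph{realizable} signature $v$ I would pick one representative $f_v\in\mathcal{F}$ with $\pi(f_v)=v$, yielding a candidate cover $\mathcal{S}\subset\mathcal{F}$ of the same size bound. To verify $\mathcal{S}$ is a $\delta$-cover, take any $f\in\mathcal{F}$ and its representative $g=f_{\pi(f)}$; since $f(x_i)$ and $g(x_i)$ round to the common mesh point $v_i$, they differ by at most $\eta$. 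For an arbitrary $x\in\mathcal{X}$, choose a cover point $x_i$ with $\norm{x-x_i}_2\leq\delta'$ and combine the triangle inequality with the $L$-Lipschitz property applied to both $f$ and $g$:
\begin{equation*}
\abs{f(x)-g(x)}\leq\abs{f(x)-f(x_i)}+\abs{f(x_i)-g(x_i)}+\abs{g(x_i)-g(x)}\leq 2L\delta'+\eta.
\end{equation*}

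Setting $\eta=\delta/2$ and $\delta'=\delta/(4L)$ makes the right-hand side $\leq\delta$, so $\mathcal{N}(\mathcal{F},\delta,\norm{\cdot}_\infty)\leq(4M/\delta+1)^{\mathcal{N}(\mathcal{X},\delta/(4L))}$; absorbing the $+1$ when $\delta\leq M$ gives the stated form with $c_1=8$ and $c_2=1/4$, while for $\delta>M$ the bound is trivial because any $f\in\mathcal{F}$ lies within $M\leq\delta$ of the zero function. The only delicate point is that the cover must lie inside $\mathcal{F}$ itself, which is automatic here because $\mathcal{S}$ is selected as a subset of $\mathcal{F}$ over \emph{realizable} signatures---no McShane-type extension step is required since I never try to realize an arbitrary pattern in $V^N$. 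I expect this argument to be essentially bookkeeping rather than a genuine obstacle; the only real choice is the balance between $\eta$ and $L\delta'$ in the bound $2L\delta'+\eta$, and any balanced choice yields the same scaling with universal constants $c_1,c_2$.
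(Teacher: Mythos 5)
Your proof is correct and takes essentially the same route as the paper, which does not reproduce an argument but simply cites Lemma 6 of Gottlieb et al.\ and remarks that the range mesh should be rescaled from $[0,1]$ to $[-M,M]$; your domain-and-range discretization with signatures is exactly the standard argument underlying that cited lemma. You also correctly handle the subtlety that the cover must consist of elements of $\mathcal{F}$ by choosing representatives only for realizable signatures rather than attempting to realize arbitrary mesh patterns as Lipschitz functions.
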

\begin{remark}\label{rmk:coverbycover}
    If $\CX$ is connected, then the bound can be improved to $\mathcal{N}(\mathcal{F},\delta,\norm{\cdot}_{\infty})\leq e^{\mathcal{N}(\mathcal{X},\frac{c_2\delta}{L})}$ by the result in \cite{kolmogorov1961varepsilon}.
\end{remark}

\begin{lemma}[Theorem 3 in \cite{sokolic2017generalization}]\label{lemma:ratioofcovering}
Assume that $\mathcal{X}=\Sigma\times\mathcal{X}_0$. If for some $\delta>0$ we have\\
1) $\norm{\sigma(x)-\sigma'(x')}_2>2\delta$, $\forall x,x'\in\mathcal{X}_0, \sigma\neq\sigma'\in\Sigma$; and\\
2) $\norm{\sigma(x)-\sigma(x')}_2\geq\norm{x-x'}_2$, $\forall x,x'\in\mathcal{X}_0, \sigma\in\Sigma$,\\
then we have
\[
\frac{\mathcal{N}(\mathcal{X}_0,\delta)}{\mathcal{N}(\mathcal{X},\delta)}\leq\frac{1}{\abs{\Sigma}}.
\]
\end{lemma}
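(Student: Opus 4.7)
The plan is to prove the inequality by starting from an optimal $\delta$-cover of the full space $\mathcal{X}$ and cutting it into $|\Sigma|$ disjoint pieces, each of which projects down (via the inverse action) to a $\delta$-cover of the fundamental domain $\mathcal{X}_0$. Concretely, let $T\subset\mathcal{X}$ be a $\delta$-cover of $\mathcal{X}$ with $|T|=\mathcal{N}(\mathcal{X},\delta)$ (note that the definition in the paper takes $S\subset\mathcal{X}$, so covers are internal, which will be convenient). For each $\sigma\in\Sigma$, I would introduce
\[
T_\sigma \;:=\; \{\, t\in T : \exists\, x\in\mathcal{X}_0 \text{ with } \|t-\sigma x\|_2\leq\delta\,\},
\]
i.e.\ the cover points that are responsible for covering some element of the copy $\sigma\mathcal{X}_0$.

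The first key step is to show the $T_\sigma$ are pairwise disjoint. If $t\in T_\sigma\cap T_{\sigma'}$ for $\sigma\neq\sigma'$, then there exist $x,x'\in\mathcal{X}_0$ with $\|t-\sigma x\|_2\leq\delta$ and $\|t-\sigma' x'\|_2\leq\delta$; the triangle inequality gives $\|\sigma x-\sigma' x'\|_2\leq 2\delta$, violating Condition 1. The same argument, applied with the decomposition $t=\sigma'' y$, $y\in\mathcal{X}_0$, of a point $t\in T_\sigma$, forces $\sigma''=\sigma$, so I obtain the sharper containment $T_\sigma\subset\sigma\mathcal{X}_0$. This means $S_\sigma:=\sigma^{-1}T_\sigma$ is a well-defined subset of $\mathcal{X}_0$.

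The second key step is to show that $S_\sigma$ is itself a $\delta$-cover of $\mathcal{X}_0$. Given $x\in\mathcal{X}_0$, the point $\sigma x\in\mathcal{X}$ is covered by some $t\in T$, which automatically lies in $T_\sigma$ by definition; setting $y_t:=\sigma^{-1}t\in\mathcal{X}_0$, Condition 2 applied to the pair $x,y_t\in\mathcal{X}_0$ yields
\[
\|x-y_t\|_2 \;\leq\; \|\sigma x-\sigma y_t\|_2 \;=\; \|\sigma x-t\|_2 \;\leq\; \delta.
\]
This is the one place where Condition 2 is used, and it is the step I expect to be most delicate because it relies on $\sigma^{-1}(T_\sigma)$ actually sitting inside $\mathcal{X}_0$ (ensured by the previous paragraph) so that the expansion inequality is applicable.

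Finally, combining the two steps gives $|T_\sigma|=|S_\sigma|\geq\mathcal{N}(\mathcal{X}_0,\delta)$ for every $\sigma$, and by disjointness
\[
\mathcal{N}(\mathcal{X},\delta) \;=\; |T| \;\geq\; \sum_{\sigma\in\Sigma}|T_\sigma| \;\geq\; |\Sigma|\,\mathcal{N}(\mathcal{X}_0,\delta),
\]
which is exactly the claimed bound $\mathcal{N}(\mathcal{X}_0,\delta)/\mathcal{N}(\mathcal{X},\delta)\leq 1/|\Sigma|$. The main obstacle, if any, is bookkeeping around the internal-vs-external cover subtlety and confirming that $T_\sigma\subset\sigma\mathcal{X}_0$ so that $\sigma^{-1}$ maps it into $\mathcal{X}_0$ cleanly; both are handled by invoking Condition 1 with the strict inequality $>2\delta$ against the non-strict inequality $\leq\delta$ produced by the cover.
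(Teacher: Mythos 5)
The paper does not supply its own proof of this lemma; it simply cites Theorem~3 of Sokolic et al.\ (2017). Your self-contained argument is correct, and it is precisely the argument the two hypotheses are designed to enable: Condition~1 with its strict inequality gives both the pairwise disjointness of the $T_\sigma$ and the containment $T_\sigma\subset\sigma\mathcal{X}_0$ (so that pulling back by $\sigma^{-1}$ stays inside $\mathcal{X}_0$, which matters because the paper's covering numbers use \emph{internal} covers), while Condition~2 is the non-contraction step that turns $\sigma^{-1}T_\sigma$ into a $\delta$-cover of $\mathcal{X}_0$. Summing $|T_\sigma|\geq\mathcal{N}(\mathcal{X}_0,\delta)$ over the $|\Sigma|$ disjoint pieces of the optimal cover of $\mathcal{X}$ yields the bound, matching the cited source in spirit and giving the reader a proof the paper itself omits.
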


In addition, we provide the following lemma for the scaling of covering numbers.
\begin{lemma}\label{lemma:scalingofcoveringnumber}
    Let $\mathcal{X}$ be a subset of $\mathbb{R}^d$ or $\mathcal{N}(\mathcal{X},\delta)\lesssim \delta^{-d}$, and $\bar{\delta}>0$. Then there exists a constant $C_{d,\bar{\delta}}$ that depends on $d$ and $\bar{\delta}$ such that for $\delta\in(0,1)$ we have
    \begin{align*}
        \mathcal{N}(\mathcal{X},\delta)\leq C_{d,\bar{\delta}}\cdot\frac{\mathcal{N}(\mathcal{X},\bar{\delta})}{\delta^d}.
    \end{align*}
\end{lemma}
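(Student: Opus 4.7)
The plan is to reduce the $\delta$-covering of $\mathcal{X}$ to a two-stage covering: first cover $\mathcal{X}$ by balls of the fixed radius $\bar{\delta}$, then refine each such ball to a $\delta$-cover using a standard volumetric estimate in $\mathbb{R}^d$. The desired bound will drop out by multiplying the two cover sizes.

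Concretely, I would proceed as follows. First, pick an optimal $\bar{\delta}$-cover $\{x_1,\dots,x_N\}\subset \mathcal{X}$ with $N=\mathcal{N}(\mathcal{X},\bar{\delta})$, so that $\mathcal{X}\subset \bigcup_{i=1}^N B(x_i,\bar{\delta})$, where $B(x,r)$ denotes the closed Euclidean ball. Next, invoke the classical fact that any Euclidean ball of radius $r$ in $\mathbb{R}^d$ admits a $\delta$-cover (with centers in the ball) of cardinality at most $(3r/\delta)^d$; this is the standard volume-packing argument (a maximal $\delta$-separated subset has disjoint $\delta/2$-balls, whose union sits inside a ball of radius $r+\delta/2\le 3r/2$ when $\delta\le r$, giving the bound). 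Applying this to each $B(x_i,\bar{\delta})$ with radius $r=\bar{\delta}$ yields at most $(3\bar{\delta}/\delta)^d$ centers per ball, and their union over $i=1,\dots,N$ forms a $\delta$-cover of $\mathcal{X}$.

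Combining these two estimates gives
\begin{equation*}
\mathcal{N}(\mathcal{X},\delta)\;\le\; N\cdot \left(\frac{3\bar{\delta}}{\delta}\right)^d \;=\; (3\bar{\delta})^d\cdot \frac{\mathcal{N}(\mathcal{X},\bar{\delta})}{\delta^d},
\end{equation*}
so the conclusion holds with $C_{d,\bar{\delta}}=(3\bar{\delta})^d$ (for $\delta\in(0,\bar{\delta}]$, which covers the stated range $\delta\in(0,1)$ up to absorbing a harmless constant if $\bar{\delta}<1$). A small caveat is that the auxiliary $\delta$-covers of the Euclidean balls live in $\mathbb{R}^d$ rather than in $\mathcal{X}$; this is easily fixed by replacing each chosen center with any point of $\mathcal{X}$ within distance $\delta$ of it (doubling $\delta$ at worst, which is absorbed into the constant), or by noting that the definition of covering number used throughout the paper does not require the cover to be contained in $\mathcal{X}$.

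I do not expect a genuine obstacle here; the only subtlety is bookkeeping the two mild conventions above (cover points lying in $\mathcal{X}$ versus $\mathbb{R}^d$, and the regime $\bar{\delta}$ versus $1$). Both are absorbed into the $d$- and $\bar{\delta}$-dependent constant, so the lemma follows from the standard volumetric covering bound with no further machinery required.
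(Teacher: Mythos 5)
Your proposal is correct and follows essentially the same two-stage covering argument as the paper: cover $\mathcal{X}$ by $\mathcal{N}(\mathcal{X},\bar{\delta})$ balls of radius $\bar{\delta}$, then refine each such ball to a $\delta$-cover via the standard volumetric bound, and multiply. The only cosmetic differences are that the paper cites a textbook bound and handles the ``centers in $\mathcal{X}$'' issue through the exterior covering number $\mathcal{N}^{\text{ext}}$ with a radius-halving step, while you prove the volume bound from scratch and move centers onto $\mathcal{X}$; both give the same conclusion up to the value of $C_{d,\bar{\delta}}$.
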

\begin{proof}
    Let $N:=\mathcal{N}(\mathcal{X},\bar{\delta})$. Then $\mathcal{X}$ can be covered by $N$ balls with radius $\bar{\delta}$. From Proposition 4.2.12 in \cite{vershynin2018high}, we know that each ball with radius $\bar{\delta}$ can be covered by $\frac{(\bar{\delta}+\delta/2)^d}{(\delta/2)^d}$ balls with radius $\delta$. This implies that $\mathcal{X}$ can be covered by $N\cdot\frac{(\bar{\delta}+\delta/2)^d}{(\delta/2)^d}$ balls with radius $\delta$, so that $\mathcal{N}^{\text{ext}}(\mathcal{X},\delta)\leq N\cdot\frac{(\bar{\delta}+\delta/2)^d}{(\delta/2)^d}$, where $\mathcal{N}^{\text{ext}}(\mathcal{X},\delta)$ is the exterior covering number of $\mathcal{X}$ with radius $\delta$. Therefore, $\mathcal{N}(\mathcal{X},\delta)\leq\mathcal{N}^{\text{ext}}(\mathcal{X},\delta/2)\leq N\cdot\frac{(\bar{\delta}+\delta/4)^d}{(\delta/4)^d} = N\cdot(\frac{4\bar{\delta}}{\delta}+1)^d\leq N\cdot\frac{(4\bar{\delta}+1)^d}{\delta^d}$.
\end{proof}

\begin{proof}[Proof of \cref{thm:gammaIPM}]
\begin{align}\label{eq:IPMsup}
    &\abs{W(P,Q)-W^{\Sigma}(P_m,Q_n)}\nonumber\\
    &= \abs{\sup_{\gamma\in\Gamma_{\Sigma}}\left\{E_P[\gamma]-E_Q[\gamma]\right\} - \sup_{\gamma\in\Gamma_{\Sigma}}\left\{E_{P_m}[\gamma]-E_{Q_n}[\gamma]\right\}}\nonumber\\
    &\leq\sup_{\gamma\in\Gamma_{\Sigma}}\abs{E_P[\gamma]-\frac{1}{m}\sum_{i=1}^m\gamma(x_i)-\left(E_Q[\gamma]-\frac{1}{n}\sum_{i=1}^n\gamma(y_i)\right)}\nonumber\\
    &=\sup_{\gamma\in\Gamma_{\Sigma}}\abs{E_P[\gamma]-\frac{1}{m}\sum_{i=1}^m\gamma\left(T_0(x_i)\right)-\left(E_Q[\gamma]-\frac{1}{n}\sum_{i=1}^n\gamma\left(T_0(y_i)\right)\right)}\nonumber\\
    &\stackrel{\text{$(a)$}}{\leq} \sup_{\gamma\in\text{Lip}_L(\mathcal{X}_0)}\abs{E_{P_{\mathcal{X}_0}}[\gamma]-\frac{1}{m}\sum_{i=1}^m\gamma\left(T_0(x_i)\right)-\left(E_{Q_{\mathcal{X}_0}}[\gamma]-\frac{1}{n}\sum_{i=1}^n\gamma\left(T_0(y_i)\right)\right)}\\
    &:=f(x_1,\dots,x_m,y_1,\dots,y_n)\nonumber,
\end{align}
where inequality $(a)$ is due to the fact that $E_P[\gamma] = E_{P_{\mathcal{X}_0}}[\gamma|_{\mathcal{X}_0}]$ and $E_Q[\gamma] = E_{Q_{\mathcal{X}_0}}[\gamma|_{\mathcal{X}_0}]$ since $P$ and $Q$ are both $\Sigma$-invariant and $\gamma\in\Gamma_{\Sigma}$, and the fact that if $\gamma\in\Gamma_{\Sigma}$, then $\gamma|_{\mathcal{X}_0}\in\text{Lip}_L(\mathcal{X}_0)$, where $\gamma|_{\mathcal{X}_0}$ is the restriction of $\gamma$ on $\mathcal{X}_0$. 

Note that the quantity inside the absolute value in \eqref{eq:IPMsup} will not change if we replace $\gamma$ by $\gamma+\nu$ and we still have $\gamma+\nu\in\text{Lip}_L(\mathcal{X}_0)$ for any $\nu\in\mathbb{R}$. Therefore, by Lemma \ref{lemma:infinitybound}, the supremum in \eqref{eq:IPMsup} can be taken over $\gamma\in\text{Lip}_L(\mathcal{X}_0)$, where $\norm{\gamma}_{\infty}\leq L\cdot\text{diam}(\mathcal{X}_0)$. The denominator in the exponent when applying the McDiarmid’s inequality is thus equal to
\begin{equation}\label{eq:mcdiarmid}
m\left(\frac{2L\cdot\text{diam}(\mathcal{X}_0)}{m}\right)^2+n\left(\frac{2L\cdot\text{diam}(\mathcal{X}_0)}{n}\right)^2 = 4L^2\cdot\text{diam}(\mathcal{X}_0)^2\frac{m+n}{mn}.
\end{equation}
Denoting by $X' = \{x'_1,x'_2,\dots,x'_m\}$ and $Y' = \{y'_1,y'_2,\dots,y'_n\}$ the i.i.d. samples drawn from $P_{\mathcal{X}_0}$ and $Q_{\mathcal{X}_0}$. Also note that $T_0(x_1),\dots,T_0(x_m)$ and $T_0(y_1),\dots,T_0(y_n)$ can be viewed as i.i.d. samples on $\mathcal{X}_0$ drawn from $P_{\mathcal{X}_0}$ and $Q_{\mathcal{X}_0}$ respectively, such that the expectation
\begin{align*}
&E_{X,Y}f(x_1,x_2,\dots,x_m,y_1,y_2,\dots,y_n)\\
&= E_{X,Y}\sup_{\gamma\in\text{Lip}_L(\mathcal{X}_0)}\abs{E_{P_{\mathcal{X}_0}}[\gamma]-\frac{1}{m}\sum_{i=1}^m\gamma(T_0(x_i))-\left(E_{Q_{\mathcal{X}_0}}[\gamma]-\frac{1}{n}\sum_{i=1}^n\gamma(T_0(y_i))\right)}
\end{align*}
can be replaced by the equivalent quantity
\[
E_{X,Y}\sup_{\gamma\in\text{Lip}_L(\mathcal{X}_0)}\abs{E_{P_{\mathcal{X}_0}}[\gamma]-\frac{1}{m}\sum_{i=1}^m\gamma(x_i)-\left(E_{Q_{\mathcal{X}_0}}[\gamma]-\frac{1}{n}\sum_{i=1}^n\gamma(y_i)\right)},
\]
where $X = \{x_1,x_2,\dots,x_m\}$ and $Y =\{y_1,y_2,\dots,y_n\}$ are are i.i.d. samples on $\mathcal{X}_0$ drawn from $P_{\mathcal{X}_0}$ and $Q_{\mathcal{X}_0}$ respectively. Then we have
\begin{align*}
&E_{X,Y}\sup_{\gamma\in\text{Lip}_L(\mathcal{X}_0)}\abs{E_{P_{\mathcal{X}_0}}[\gamma]-\frac{1}{m}\sum_{i=1}^m\gamma(x_i)-\left(E_{Q_{\mathcal{X}_0}}[\gamma]-\frac{1}{n}\sum_{i=1}^n\gamma(y_i)\right)}\\
&= E_{X,Y}\sup_{\gamma\in\text{Lip}_L(\mathcal{X}_0)}\abs{E_{X'}\left(\frac{1}{m}\sum_{i=1}^m\gamma(x'_i)\right)-\frac{1}{m}\sum_{i=1}^m\gamma(x_i)-E_{Y'}\left(\frac{1}{n}\sum_{i=1}^n\gamma(y'_i)\right)+\frac{1}{n}\sum_{i=1}^n\gamma(y_i)}\\
&\leq E_{X,Y,X',Y'}\sup_{\gamma\in\text{Lip}_L(\mathcal{X}_0)}\abs{\frac{1}{m}\sum_{i=1}^m\gamma(x'_i)-\frac{1}{m}\sum_{i=1}^m\gamma(x_i)-\frac{1}{n}\sum_{i=1}^n\gamma(y'_i)+\frac{1}{n}\sum_{i=1}^n\gamma(y_i)}\\
&=E_{X,Y,X',Y',\xi,\xi'}\sup_{\gamma\in\text{Lip}_L(\mathcal{X}_0)}\abs{\frac{1}{m}\sum_{i=1}^m\xi_i\left(\gamma(x'_i)-\gamma(x_i)\right)-\frac{1}{n}\sum_{i=1}^n\xi'_i\left(\gamma(y'_i)-\gamma(y_i)\right)}\\
&\leq E_{X,X',\xi}\sup_{\gamma\in\text{Lip}_L(\mathcal{X}_0)}\abs{\frac{1}{m}\sum_{i=1}^m\xi_i\left(\gamma(x'_i)-\gamma(x_i)\right)} + E_{Y,Y',\xi'}\sup_{\gamma\in\text{Lip}_L(\mathcal{X}_0)}\abs{\frac{1}{n}\sum_{i=1}^n\xi'_i\left(\gamma(y'_i)-\gamma(y_i)\right)}\\
&\leq \inf_{\alpha>0} 8\alpha+\frac{24}{\sqrt{m}}\int_{\alpha}^{M}\sqrt{\ln\mathcal{N}(\mathcal{F}_0,\delta,\norm{\cdot}_{\infty})}\diff{\delta} + \inf_{\alpha>0} 8\alpha+\frac{24}{\sqrt{n}}\int_{\alpha}^{M}\sqrt{\ln\mathcal{N}(\mathcal{F}_0,\delta,\norm{\cdot}_{\infty})}\diff{\delta},
\end{align*}
where $\mathcal{F}_0 = \{\gamma\in\text{Lip}_L(\mathcal{X}_0):\norm{\gamma}_\infty\leq M\}$ and $M = L\cdot\text{diam}(\mathcal{X}_0)$ by Lemma \ref{lemma:infinitybound}. 

For $d\geq2$, from Lemma \ref{lemma:coverbycover}, we have $\ln\mathcal{N}(\mathcal{F}_0,\delta,\norm{\cdot}_{\infty})\leq\mathcal{N}(\mathcal{X}_0,\frac{c_2\delta}{L})\ln(\frac{c_1M}{\delta})$. We fix a $\bar{\delta}>0$ such that $\mathcal{N}(\mathcal{X},\frac{c_2\bar{\delta}}{L})=1$, and select $\delta^*$ such that $\frac{c_2\delta^*}{L}\leq1$ and $\frac{c_2\delta^*}{L}\leq\delta_0$; that is, $\delta^*\leq\min\left(\frac{L}{c_2},\frac{L\delta_0}{c_2}\right)$, so that by Lemma \ref{lemma:ratioofcovering} and \ref{lemma:scalingofcoveringnumber}, we have 
\[
\mathcal{N}(\mathcal{X}_0,\frac{c_2\delta}{L})\ln(\frac{c_1M}{\delta})\leq\frac{\mathcal{N}(\mathcal{X},\frac{c_2\delta}{L})}{\abs{\Sigma}}\ln(\frac{c_1M}{\delta})\leq\frac{C_{d,\bar{\delta}}L^d}{\abs{\Sigma}c_2^d\delta^d}\ln(\frac{c_1M}{\delta}),
\]
when $\delta<\delta^*$. Therefore, for sufficiently small $\alpha$, we have 
\begin{align}\label{eq:dudleybound}
&\int_{\alpha}^{M}\sqrt{\ln\mathcal{N}(\mathcal{F}_0,\delta,\norm{\cdot}_{\infty})}\diff{\delta}\nonumber\\
&= \int_{\alpha}^{\delta^*}\sqrt{\ln\mathcal{N}(\mathcal{F}_0,\delta,\norm{\cdot}_{\infty})}\diff{\delta} + \int_{\delta^*}^{M}\sqrt{\ln\mathcal{N}(\mathcal{F}_0,\delta,\norm{\cdot}_{\infty})}\diff{\delta}\nonumber\\
&\leq \int_{\alpha}^{\delta^*}\sqrt{\frac{C_{d,\bar{\delta}}L^d}{\abs{\Sigma}c_2^d\delta^d}\ln(\frac{c_1M}{\delta})}\diff{\delta} + \int_{\delta^*}^{M}\sqrt{\ln\mathcal{N}(\mathcal{F}_0,\delta,\norm{\cdot}_{\infty})}\diff{\delta}.
\end{align}
For any $s>0$, we can choose $\delta^*$ to be sufficiently small, such that we have $\ln(\frac{c_1M}{\delta})\leq\frac{1}{\delta^s}$ when $\delta\leq\delta^*$. Therefore, if we let $D_{\mathcal{X},L}=\sqrt{\frac{C_{d,\bar{\delta}}L^d}{c_2^d}}$, we will have 
\begin{align*}      \int_{\alpha}^{\delta^*}\sqrt{\frac{C_{d,\bar{\delta}}L^d}{\abs{\Sigma}c_2^d\delta^d}\ln(\frac{c_1M}{\delta})}\diff{\delta}
&\leq D_{\mathcal{X},L}\int_{\alpha}^{\delta^*}\sqrt{\frac{1}{\abs{\Sigma}\delta^{d+s}}}\diff{\delta}\\
&\leq D_{\mathcal{X},L}\int_{\alpha}^{\infty}\sqrt{\frac{1}{\abs{\Sigma}\delta^{d+s}}}\diff{\delta}\\
&=\frac{D_{\mathcal{X},L}}{\sqrt{\abs{\Sigma}}}\cdot\frac{\alpha^{1-\frac{d+s}{2}}}{\frac{d+s}{2}-1}.
\end{align*}
Notice that the second integral in \eqref{eq:dudleybound} is bounded while the first integral diverges as $\alpha$ tends to zero, so we can optimize the majorizing terms
\[
8\alpha + \frac{24}{\sqrt{m}}\cdot\frac{D_{\mathcal{X},L}}{\sqrt{\abs{\Sigma}}}\cdot\frac{\alpha^{1-\frac{d+s}{2}}}{\frac{d+s}{2}-1}
\]
with respect to $\alpha$, to obtain \[
\alpha = (\frac{9}{m})^{\frac{1}{d+s}}\cdot(\frac{D_{\mathcal{X},L}^2}{\abs{\Sigma}})^{\frac{1}{d+s}},
\]
so that
\begin{align*}
    &\inf_{\alpha>0} 8\alpha+\frac{24}{\sqrt{m}}\int_{\alpha}^{M}\sqrt{\ln\mathcal{N}(\mathcal{F}_0,\delta,\norm{\cdot}_{\infty})}\diff{\delta}\\
    &\leq 8(\frac{9}{m})^{\frac{1}{d+s}}\cdot(\frac{D_{\mathcal{X},L}^2}{\abs{\Sigma}})^{\frac{1}{d+s}} + \frac{24}{(\frac{d+s}{2}-1)}(\frac{9}{m})^{\frac{1}{d+s}}\cdot(\frac{D_{\mathcal{X},L}^2}{\abs{\Sigma}})^{\frac{1}{d+s}} + \frac{24}{\sqrt{m}}\int_{\delta^*}^{M}\sqrt{\ln\mathcal{N}(\mathcal{F}_0,\delta,\norm{\cdot}_{\infty})}\diff{\delta}.
\end{align*}
Therefore, for sufficiently large $m$ and $n$, we have 
\begin{align*}
&E_{X,Y}\sup_{\gamma\in\text{Lip}_L(\mathcal{X}_0)}\abs{E_{P_{\mathcal{X}_0}}[\gamma]-\frac{1}{m}\sum_{i=1}^m\gamma(x_i)-\left(E_{Q_{\mathcal{X}_0}}[\gamma]-\frac{1}{n}\sum_{i=1}^n\gamma(y_i)\right)}\\
&\leq \left(8+\frac{24}{(\frac{d+s}{2}-1)}\right)\left[\left(\frac{9D_{\mathcal{X},L}^2}{\abs{\Sigma}m}\right)^{\frac{1}{d+s}}+\left(\frac{9D_{\mathcal{X},L}^2}{\abs{\Sigma}n}\right)^{\frac{1}{d+s}}\right]\\
&\quad+ \left(\frac{24}{\sqrt{m}}+ \frac{24}{\sqrt{n}}\right)\int_{\delta^*}^{M}\sqrt{\ln\mathcal{N}(\mathcal{F}_0,\delta,\norm{\cdot}_{\infty})}\diff{\delta}.
\end{align*}

For $d=1$, the first integral in \eqref{eq:dudleybound} in the one-dimensional case does not have a singularity at $\alpha = 0$. On the other hand, the covering number $\mathcal{N}(\mathcal{F}_0,\delta,\norm{\cdot}_{\infty})$ is bounded by the covering number for which we extend the domain to an interval that contains $\mathcal{X}_0$. Replacing the interval $[0,1]$ by an interval of length $\text{diam}(\mathcal{X}_0)$ in Lemma 5.16 in \cite{van2014probability}, there exists a constant $c>0$ such that 
    \[
    \mathcal{N}(\mathcal{F}_0,\delta,\norm{\cdot}_{\infty})\leq e^{\frac{cL\cdot\text{diam}(\mathcal{X}_0)}{\delta}}\,\,\, \text{for}\,\, \delta<M=L\cdot\text{diam}(\mathcal{X}_0).\]
Therefore, we have 
\begin{align*}
    8\alpha+\frac{24}{\sqrt{m}}\int_{\alpha}^{M}\sqrt{\ln\mathcal{N}(\mathcal{F}_0,\delta,\norm{\cdot}_{\infty})}\diff{\delta} \leq 8\alpha + \frac{24}{\sqrt{m}}\int_{\alpha}^{M}\sqrt{\frac{cL\cdot\text{diam}(\mathcal{X}_0)}{\delta}}\diff{\delta},
\end{align*}
whose minimum is achieved at $\alpha = \frac{9cL\cdot\text{diam}(\mathcal{X}_0)}{m}$. This implies that
\begin{align*}
\inf_{\alpha>0} 8\alpha+\frac{24}{\sqrt{m}}\int_{\alpha}^{M}\sqrt{\ln\mathcal{N}(\mathcal{F}_0,\delta,\norm{\cdot}_{\infty})}\diff{\delta} &\leq \frac{72cL\cdot\text{diam}(\mathcal{X}_0)}{m} + \frac{48L\sqrt{c}\cdot\text{diam}(\mathcal{X}_0)}{\sqrt{m}} - \frac{144cL\cdot\text{diam}(\mathcal{X}_0)}{m}\\
&= \frac{48L\sqrt{c}\cdot\text{diam}(\mathcal{X}_0)}{\sqrt{m}} - \frac{72cL\cdot\text{diam}(\mathcal{X}_0)}{m}.
\end{align*}
Hence, we have
\begin{align*}
&E_{X,Y}\sup_{\gamma\in\text{Lip}_L(\mathcal{X}_0)}\abs{E_{P_{\mathcal{X}_0}}[\gamma]-\frac{1}{m}\sum_{i=1}^m\gamma(x_i)-\left(E_{Q_{\mathcal{X}_0}}[\gamma]-\frac{1}{n}\sum_{i=1}^n\gamma(y_i)\right)}\\
&\quad\leq \frac{48L\sqrt{c}\cdot\text{diam}(\mathcal{X}_0)}{\sqrt{m}} - \frac{72cL\cdot\text{diam}(\mathcal{X}_0)}{m} + \frac{48L\sqrt{c}\cdot\text{diam}(\mathcal{X}_0)}{\sqrt{n}} - \frac{72cL\cdot\text{diam}(\mathcal{X}_0)}{n}.
\end{align*}
Finally, by a simple change of variable for the probability provided in \eqref{eq:mcdiarmid}, we prove the theorem.
\end{proof}

\begin{remark}\label{rmk:wss-1d}
Though we do not directly observe the effect under the group invariance in the case when $d=1$ in \cref{thm:gammaIPM}, the upper bound can be improved in some special cases. Here we analyze \cref{example:wss1d} as an example. Replacing the interval $[0,1]$ by $\mathcal{X}_0= [0,\frac{1}{\abs{\Sigma}})$ in Lemma 5.16 in \cite{van2014probability}, there exists a constant $c>0$ such that 
    \[
    \mathcal{N}(\mathcal{F}_0,\delta,\norm{\cdot}_{\infty})\leq e^{\frac{cL}{\abs{\Sigma}\delta}}\,\,\, \text{for}\,\, \delta<M=L\cdot\text{diam}(\mathcal{X}_0).\]
    Therefore, we have 
\begin{align*}
    8\alpha+\frac{24}{\sqrt{m}}\int_{\alpha}^{M}\sqrt{\ln\mathcal{N}(\mathcal{F}_0,\delta,\norm{\cdot}_{\infty})}\diff{\delta} = 8\alpha + \frac{24}{\sqrt{m}}\int_{\alpha}^{M}\sqrt{\frac{cL}{\abs{\Sigma}\delta}}\diff{\delta},
\end{align*}
whose minimum is achieved at $\alpha = \frac{9cL}{m\abs{\Sigma}}$. This implies that
\begin{align*}
\inf_{\alpha>0} 8\alpha+\frac{24}{\sqrt{m}}\int_{\alpha}^{M}\sqrt{\ln\mathcal{N}(\mathcal{F}_0,\delta,\norm{\cdot}_{\infty})}\diff{\delta} = \frac{72cL}{\abs{\Sigma}m} + \frac{48L\sqrt{c}}{\abs{\Sigma}\sqrt{m}} - \frac{144cL}{\abs{\Sigma}m}
= \frac{48L\sqrt{c}}{\abs{\Sigma}\sqrt{m}} - \frac{72cL}{\abs{\Sigma}m}.
\end{align*}
Hence, we have
\begin{align*}
E_{X,Y}\sup_{\gamma\in\text{Lip}_L(\mathcal{X}_0)}\abs{E_{P_{\mathcal{X}_0}}[\gamma]-\frac{1}{m}\sum_{i=1}^m\gamma(x_i)-\left(E_{Q_{\mathcal{X}_0}}[\gamma]-\frac{1}{n}\sum_{i=1}^n\gamma(y_i)\right)}
\leq \frac{48L\sqrt{c}}{\abs{\Sigma}\sqrt{m}} - \frac{72cL}{\abs{\Sigma}m} + \frac{48L\sqrt{c}}{\abs{\Sigma}\sqrt{n}} - \frac{72cL}{\abs{\Sigma}n}.
\end{align*}
This matches the numerical result in \cref{fig:wasserstein} where the ratio curves are around $4$, since our group sizes are $\abs{\Sigma} = 1, 4, 16,64,256$, increasing by a factor of $4$,
\end{remark}

\subsection{$(f_{\alpha},\Gamma)$-divergence}\label{appendix:falphagamma}
We assume Assumption~\ref{assumption:sapiro} also holds in this case.
\begin{theorem}\label{thm:f-gamma}
Let $\mathcal{X} = \Sigma\times\mathcal{X}_0$ be a subset of $\mathbb{R}^D$ equipped with the Euclidean distance, $f(x)=f_\alpha(x)=\frac{x^\alpha-1}{\alpha(\alpha-1)}$, $\alpha>1$ and $\Gamma = \text{Lip}_L(\mathcal{X})$. Assume that $\mathcal{N}(\mathcal{X},\delta)\lesssim \delta^{-d}$ for sufficiently small $\delta$. Suppose $P$ and $Q$ are $\Sigma$-invariant distributions on $\mathcal{X}$. We have

1) if $d\geq 2$, then for any $s>0$ and $m,n$ sufficiently large, we have with probability at least $1-\epsilon$,
\begin{align*}
\abs{D_{f_\alpha}^{\Gamma}(P\| Q)-D_{f_\alpha}^{\Gamma_{\Sigma}}(P_m\|Q_n)}&\leq\left(8+\frac{24}{(\frac{d+s}{2}-1)}\right)\left[\left(\frac{9D_{\mathcal{X},L}^2}{\abs{\Sigma}m}\right)^{\frac{1}{d+s}}+\left(\frac{9D_{\mathcal{X},L'}^2}{\abs{\Sigma}n}\right)^{\frac{1}{d+s}}\right]\\
&\quad+ \frac{24\bar{D}_{\mathcal{X}_0, L}}{\sqrt{m}} + \frac{24\bar{D}_{\mathcal{X}_0, L'}}{\sqrt{n}}\\
&\quad + \sqrt{\frac{2(M_1^2m+M_0^2n)}{mn}\ln \frac{1}{\epsilon}},
\end{align*}
where $D_{\mathcal{X}, L}$ depends only on $\mathcal{X}$ and $L$, and $D_{\mathcal{X}, L'}$ depends only on $\mathcal{X}$, $L$ and $\alpha$; $\bar{D}_{\mathcal{X}_0, L}$ depends only on $\mathcal{X}_0$ and $L$, and $\bar{D}_{\mathcal{X}_0, L'}$ depends only on $\mathcal{X}_0$ and $L$ and $\alpha$, and both are increasing in $\mathcal{X}_0$; $M_0$ and $M_1$ both only depend on $\mathcal{X}$, $L$ and $\alpha$;

2) if $d=1$, for any $\epsilon>0$ and $m,n$ sufficiently large, we have with probability at least $1-\epsilon$,
\begin{align*}
\abs{D_{f_\alpha}^{\Gamma}(P\| Q)-D_{f_\alpha}^{\Gamma_{\Sigma}}(P_m\|Q_n)}&\leq\frac{48L\sqrt{c}\cdot\text{\normalfont diam}(\mathcal{X}_0)}{\sqrt{m}} - \frac{72cL\cdot\text{\normalfont diam}(\mathcal{X}_0)}{m} + \frac{48L'\sqrt{c}\cdot\text{\normalfont diam}(\mathcal{X}_0)}{\sqrt{n}} - \frac{72cL'\cdot\text{\normalfont diam}(\mathcal{X}_0)}{n}\\
&\quad+\sqrt{\frac{2(M_1^2m+M_0^2n)}{mn}\ln \frac{1}{\epsilon}},
\end{align*}
where $c>0$ is an absolute constant independent of $\mathcal{X}_0$; $L'$ depends only on $\CX$, $L$ and $\alpha$; $M_0$ and $M_1$ both only depend on $\mathcal{X}$, $L$ and $\alpha$.
\end{theorem}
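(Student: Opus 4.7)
The plan is to follow the template of the proof of \cref{thm:gammaIPM}, treating the linear piece $E_P[\gamma+\nu]$ and the nonlinear piece $E_Q[f_\alpha^*(\gamma+\nu)]$ separately. Starting from the equivalent variational representation \eqref{eq:falphagammanew} and using $D_{f_\alpha}^{\Gamma}=D_{f_\alpha}^{\Gamma_\Sigma}$ for $\Sigma$-invariant $P, Q$ (Result~\ref{thm:sp-gan-main-result}), I would first apply the inequality $|\sup a-\sup b|\leq\sup|a-b|$ and the triangle inequality to obtain
\[
\left|D_{f_\alpha}^{\Gamma}(P\|Q)-D_{f_\alpha}^{\Gamma_{\Sigma}}(P_m\|Q_n)\right|\leq I_m+J_n,
\]
where $I_m=\sup_{\gamma\in\Gamma_\Sigma,\nu\in\R}\bigl|E_P[\gamma+\nu]-E_{P_m}[\gamma+\nu]\bigr|$ and $J_n$ is the analogous supremum for $f_\alpha^*(\gamma+\nu)$ under $E_Q-E_{Q_n}$. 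A single application of McDiarmid's inequality to the combined supremum then gives the concentration, with per-coordinate bounded differences $2M_0/m$ for the $x_i$'s and $2M_1/n$ for the $y_j$'s, producing the tail term $\sqrt{2(M_1^2 m+M_0^2 n)\ln(1/\epsilon)/(mn)}$.

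The term $I_m$ is structurally identical to the quantity already analyzed for $W^\Sigma$: reduce to the fundamental domain $\CX_0$ via the quotient map $T_0$, invoke \cref{lemma:falphainfinitybound} to restrict attention to admissible pairs $(\gamma,\nu)$ with $\|\gamma+\nu\|_\infty\leq M_0$ for some $M_0=M_0(\CX,L,\alpha)$, and then apply Rademacher symmetrization followed by Dudley's integral (\cref{lemma:Dudley}). For $d\geq 2$ the chain \cref{lemma:coverbycover}, \ref{lemma:ratioofcovering}, \ref{lemma:scalingofcoveringnumber} yields the rate $\bigl((|\Sigma|m)^{-1/(d+s)}\bigr)$ plus a non-singular $\bar D_{\CX_0,L}/\sqrt{m}$ remainder; for $d=1$ the bound $\mathcal{N}(\mathcal{F}_0,\delta,\|\cdot\|_\infty)\leq\exp\bigl(cL\,\text{diam}(\CX_0)/\delta\bigr)$ of \cref{rmk:wss-1d}-style produces the $\text{diam}(\CX_0)/\sqrt{m}$ rate.

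For $J_n$ the new ingredient is that for $\alpha>1$ the map $y\mapsto\alpha^{-1}(\alpha-1)^{\alpha/(\alpha-1)}y^{\alpha/(\alpha-1)}\mathbf{1}_{y>0}$ is continuously differentiable, so on any interval $[-M_0,M_0]$ the Legendre transform $f_\alpha^*$ is $L''$-Lipschitz with $L''=L''(\alpha,M_0)$ and bounded by some $M_1=M_1(\alpha,M_0)$. Consequently, on the admissible class controlled by \cref{lemma:falphainfinitybound}, the composed functions $y\mapsto f_\alpha^*(\gamma(y)+\nu)$ are uniformly $L'$-Lipschitz on $\CX_0$ with $L'=L\cdot L''$, and the class $\{f_\alpha^*(\gamma+\nu)\}$ embeds into $\mathcal{F}_0^{(L',M_1)}=\{g\in\text{Lip}_{L'}(\CX_0):\|g\|_\infty\leq M_1\}$. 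Running the same Dudley/covering-number machinery verbatim with $(L,M_0)$ replaced by $(L',M_1)$ produces the constants $C_2$, $\bar D_{\CX_0,L'}$, and $D_{\CX,L'}$ that inherit their $\alpha$-dependence precisely through $L'$ and $M_1$.

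The main obstacle, and the one genuinely new feature relative to the Wasserstein case, is the loss of translation-invariance caused by $f_\alpha^*$: unlike in \cref{thm:gammaIPM}, the shift $\gamma\mapsto\gamma+\nu$ cannot be exploited for free to normalize $\|\gamma\|_\infty$, because the $Q$-integrand depends nonlinearly on $\nu$. Without a uniform bound on $\|\gamma+\nu\|_\infty$ for near-optimizers, the local Lipschitz constant $L''$ of $f_\alpha^*$, and hence the metric entropy of the $f_\alpha^*(\gamma+\nu)$-class, would be unbounded. \cref{lemma:falphainfinitybound} is exactly what tames this issue, simultaneously providing the $M_0$ needed for $I_m$ and the $(L',M_1)$ needed for $J_n$, and it is the single place where the proof must go beyond the Wasserstein-1 argument.
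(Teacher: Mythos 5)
Your proposal matches the paper's proof almost exactly: both reduce to bounded test functions via \cref{lemma:falphainfinitybound}, split off the $f_\alpha^*$ term, exploit the local Lipschitz constant $L'$ of $f_\alpha^*$ on the bounded range $[0,M_0]$ to run the Dudley/covering-number machinery on the second class $\mathcal{F}_1=\{\gamma\in\text{Lip}_{L'}(\CX_0):\|\gamma\|_\infty\le M_1\}$, and apply McDiarmid with bounded differences $2M_0/m$ and $2M_1/n$. You also correctly identify \cref{lemma:falphainfinitybound} as the single new ingredient needed beyond the Wasserstein argument, and the $\alpha$-dependence propagating through $L'$ and $M_1$ — this is precisely the paper's route.
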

Before proving this theorem, we first provide the following lemma.

\begin{lemma}\label{lemma:falphainfinitybound}
    $D_{f_\alpha}^{\Gamma}(P\| Q) = D_{f_\alpha}^{\mathcal{F}}(P\| Q)$, where $$\mathcal{F} = \left\{\gamma\in\text{Lip}_{L}(\mathcal{X}):\norm{\gamma}_\infty\leq (\alpha-1)^{-1}+L\cdot\text{diam}(\mathcal{X}) \right\},$$
    and $P$ and $Q$ are probability distributions on $\mathcal{X}$ that are not necessarily $\Sigma$-invariant.
\end{lemma}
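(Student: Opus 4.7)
The plan is to establish the two inclusions separately. Since $\mathcal{F}\subset\Gamma$, one direction $D_{f_\alpha}^{\mathcal{F}}(P\|Q)\leq D_{f_\alpha}^{\Gamma}(P\|Q)$ is immediate from the definition of the supremum. The nontrivial direction $D_{f_\alpha}^{\Gamma}(P\|Q)\leq D_{f_\alpha}^{\mathcal{F}}(P\|Q)$ is where I would invoke the equivalent representation \eqref{eq:falphagammanew}, which allows absorbing an arbitrary shift $\nu\in\mathbb{R}$ into $\gamma$ while staying inside $\Gamma=\text{Lip}_L(\mathcal{X})$.

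Fix any $\gamma\in\Gamma$ and set $g(\nu):=E_P[\gamma+\nu]-E_Q[f_\alpha^*(\gamma+\nu)]$. Since $f_\alpha^*$ is convex, $g$ is concave in $\nu$, with $g'(\nu)=1-E_Q[(f_\alpha^*)'(\gamma+\nu)]$. Using $(f_\alpha^*)'(y)=0$ for $y\leq 0$ and $(f_\alpha^*)'(y)=((\alpha-1)y)^{1/(\alpha-1)}$ for $y>0$ (recall $\alpha>1$), I would check $g'(\nu)\to 1$ as $\nu\to-\infty$ and $g'(\nu)\to-\infty$ as $\nu\to+\infty$; combined with boundedness of $\gamma$ on the bounded set $\mathcal{X}$ (justifying differentiation under the integral), this yields an interior maximizer $\nu^*\in\mathbb{R}$ satisfying the first-order condition $E_Q[(f_\alpha^*)'(\gamma+\nu^*)]=1$.

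The key step is to show $\gamma+\nu^*\in\mathcal{F}$. Since $(f_\alpha^*)'$ is non-negative and non-decreasing on $\mathbb{R}$ with $(f_\alpha^*)'((\alpha-1)^{-1})=1$, the first-order condition forces $\gamma+\nu^*$ to take values on both sides of $(\alpha-1)^{-1}$ on $\text{supp}(Q)$: if $\gamma+\nu^*>(\alpha-1)^{-1}$ held $Q$-a.e., the expectation would exceed $1$, and the symmetric reasoning rules out $\gamma+\nu^*<(\alpha-1)^{-1}$ being $Q$-a.e. Hence $\inf_{\mathcal{X}}(\gamma+\nu^*)\leq(\alpha-1)^{-1}\leq\sup_{\mathcal{X}}(\gamma+\nu^*)$, and combining with the Lipschitz bound $\sup_{\mathcal{X}}-\inf_{\mathcal{X}}\leq L\cdot\text{diam}(\mathcal{X})$ sandwiches $\gamma+\nu^*$ in the interval $[(\alpha-1)^{-1}-L\cdot\text{diam}(\mathcal{X}),\,(\alpha-1)^{-1}+L\cdot\text{diam}(\mathcal{X})]$, yielding the required bound $\|\gamma+\nu^*\|_\infty\leq(\alpha-1)^{-1}+L\cdot\text{diam}(\mathcal{X})$.

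Finally, using \eqref{eq:falphagammanew},
\[
D_{f_\alpha}^{\Gamma}(P\|Q)=\sup_{\gamma\in\Gamma}\sup_{\nu\in\mathbb{R}}g(\nu)=\sup_{\gamma\in\Gamma}g(\nu^*(\gamma))\leq\sup_{\tilde{\gamma}\in\mathcal{F}}\bigl\{E_P[\tilde{\gamma}]-E_Q[f_\alpha^*(\tilde{\gamma})]\bigr\}=D_{f_\alpha}^{\mathcal{F}}(P\|Q),
\]
since $\tilde\gamma:=\gamma+\nu^*(\gamma)\in\mathcal{F}$ for each $\gamma$. The main obstacle is the $L^\infty$ sandwich in paragraph three, which works precisely because the critical value of $(f_\alpha^*)'$ equals $1$ exactly at $y=(\alpha-1)^{-1}$ (this is what produces the $(\alpha-1)^{-1}$ offset in the definition of $\mathcal{F}$) and because the $L$-Lipschitz property propagates a pointwise bound at one location into a uniform bound across $\mathcal{X}$.
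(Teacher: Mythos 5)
Your proof is correct and follows essentially the same strategy as the paper's: invoke the shift-invariant representation \eqref{eq:falphagammanew}, study $h(\nu)=E_P[\gamma+\nu]-E_Q[f_\alpha^*(\gamma+\nu)]$, and combine a first-order argument with the Lipschitz bound $\sup\gamma-\inf\gamma\le L\cdot\text{diam}(\mathcal{X})$. The paper handles the two sides of the $L^\infty$ bound slightly asymmetrically — it shows the supremum cannot increase once $\inf_{\mathcal{X}}\gamma>(\alpha-1)^{-1}$ (a downward shift improves, using $h'(0)<0$) and, by a separate bare-hands argument, that once $\sup_{\mathcal{X}}\gamma<0$ an upward shift strictly improves because $f_\alpha^*\equiv 0$ on the nonpositive reals — yielding the anchor values $(\alpha-1)^{-1}$ and $0$, respectively. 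You instead establish concavity of $g$ and the existence of an interior maximizer $\nu^*$ satisfying $E_Q[(f_\alpha^*)'(\gamma+\nu^*)]=1$, and read off a symmetric sandwich $\inf\le(\alpha-1)^{-1}\le\sup$ from the single stationarity condition. Your version is a bit more streamlined and actually pins $\gamma+\nu^*$ to a slightly tighter interval $[(\alpha-1)^{-1}-L\cdot\text{diam}(\mathcal{X}),\,(\alpha-1)^{-1}+L\cdot\text{diam}(\mathcal{X})]$, but the resulting $\norm{\cdot}_\infty$ bound and the conclusion are identical.
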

\begin{proof}
    For any fixed $\gamma\in\Gamma$, let $h(\nu) = E_{P}[\gamma+\nu]-E_{Q}[f_\alpha^*(\gamma+\nu)]$.
    We know that $\sup_{x\in\mathcal{X}}\gamma(x)- \inf_{x\in\mathcal{X}}\gamma(x)\leq L\cdot\text{diam}(\mathcal{X})$, 
    so interchanging the integration with differentiation is allowed by the dominated convergence theorem: $h'(\nu) = 1 -E_Q[f_\alpha^{*\prime}(\gamma+\nu)]$, where
    \begin{align*}
    f_\alpha^{*\prime}(y) = (\alpha-1)^{\frac{1}{\alpha-1}}y^{\frac{1}{\alpha-1}}\mathbf{1}_{y>0}.
    \end{align*}
    If $\inf_{x\in\mathcal{X}}\gamma(x)>(\alpha-1)^{-1}$, then $h'(0)<0$. So there exists some $\nu_0<0$ such that $E_{P}[\gamma+\nu_0]-E_{Q}[f_\alpha^*(\gamma+\nu_0)] = h(\nu_0)>h(0) = E_{P}[\gamma]-E_{Q}[f_\alpha^*(\gamma)]$. This indicates the supremum in $D_{f}^{\Gamma}(P\| Q)$ is attained only if $\sup_{x\in\mathcal{X}}\gamma(x)\leq (\alpha-1)^{-1}+L\cdot\text{diam}(\mathcal{X})$. On the other hand, if $\sup_{x\in\mathcal{X}}\gamma(x)<0$, then there exists $\nu_0>0$ that satisfies $\sup_{x\in\mathcal{X}}\gamma(x)+\nu_0<0$ such that $E_{P}[\gamma+\nu_0]-E_{Q}[f_\alpha^*(\gamma+\nu_0)] = E_{P}[\gamma]+\nu_0> E_{P}[\gamma] = E_{P}[\gamma]-E_{Q}[f_\alpha^*(\gamma)]$. This indicates that the supremum in $D_{f}^{\Gamma}(P\| Q)$ is attained only if $\inf_{x\in\mathcal{X}}\gamma(x)\geq -L\cdot\text{diam}(\mathcal{X})$. Therefore, we have that the supremum in $D_{f}^{\Gamma}(P\| Q)$ is attained only if $\norm{\gamma}_\infty\leq(\alpha-1)^{-1}+L\cdot\text{diam}(\mathcal{X})$.
\end{proof}

\begin{proof}[Proof of \cref{thm:f-gamma}]
Similar to the beginning of the proof of Theorem \ref{thm:gammaIPM}, we have by Lemma \ref{lemma:falphainfinitybound} that
\begin{align*}\label{eq:alphasup}
    &\abs{D_{f_\alpha}^{\Gamma}(P\| Q)-D_{f_\alpha}^{\Gamma_\Sigma}(P_m,Q_n)}\\
    &= \abs{\sup_{\substack{\gamma\in\Gamma_{\Sigma}\\ \norm{\gamma}_\infty\leq M_0}}\left\{E_{P}[\gamma]-E_{Q}[f_\alpha^*(\gamma)]\right\} - \sup_{\substack{\gamma\in\Gamma_{\Sigma}\\ \norm{\gamma}_\infty\leq M_0}}\left\{E_{P_m}[\gamma]-E_{Q_n}[f_\alpha^*(\gamma)]\right\}}\\
    &\leq\sup_{\substack{\gamma\in\Gamma_{\Sigma}\\ \norm{\gamma}_\infty\leq M_0}}\abs{E_P[\gamma]-\frac{1}{m}\sum_{i=1}^m\gamma(x_i)-\left(E_Q[f_\alpha^*(\gamma)]-\frac{1}{n}\sum_{i=1}^n f_\alpha^*\left(\gamma(y_i)\right)\right)}\\
    &=\sup_{\substack{\gamma\in\Gamma_{\Sigma}\\ \norm{\gamma}_\infty\leq M_0}}\abs{E_P[\gamma]-\frac{1}{m}\sum_{i=1}^m\gamma\left(T_0(x_i)\right)-\left(E_Q[f_\alpha^*(\gamma)]-\frac{1}{n}\sum_{i=1}^n f_\alpha^*\left(\gamma(T_0(y_i))\right)\right)}\\
    &\leq \sup_{\substack{\gamma\in\text{Lip}_L(\mathcal{X}_0)\\ \norm{\gamma}_\infty\leq M_0}}\abs{E_{P_{\mathcal{X}_0}}[\gamma]-\frac{1}{m}\sum_{i=1}^m\gamma(T_0(x_i))-\left(E_{Q_{\mathcal{X}_0}}[f_\alpha^*(\gamma)]-\frac{1}{n}\sum_{i=1}^n f_\alpha^*\left(\gamma(T_0(y_i))\right)\right)}\\
    &:=g(x_1,\dots,x_m,y_1,\dots,y_n),
\end{align*}
where $T_0$ is the same as defined in \eqref{def:quotientmap}.
The denominator in the exponent when applying the McDiarmid’s inequality is thus equal to
\[
m\left(\frac{2M_0}{m}\right)^2+n\left(\frac{2M_1}{n}\right)^2 = \frac{4M_0^2}{m}+\frac{4M_1^2}{n},
\]
where $M_0=(\alpha-1)^{-1}+L\cdot\text{diam}(\mathcal{X})$, $M_1 = f_\alpha^*(M_0)$, since for any $\gamma$ such that $\norm{\gamma}_\infty\leq M_0$, we have $\norm{f_\alpha^*\circ\gamma}_\infty\leq M_1$. Denoting by $X' = \{x'_1,x'_2,\dots,x'_m\}$ and $Y' = \{y'_1,y'_2,\dots,y'_n\}$ the i.i.d. samples drawn from $P_{\mathcal{X}_0}$ and $Q_{\mathcal{X}_0}$. Also note that $T_0(x_1),\dots,T_0(x_m)$ and $T_0(y_1),\dots,T_0(y_n)$ can be viewed as i.i.d. samples on $\mathcal{X}_0$ drawn from $P_{\mathcal{X}_0}$ and $Q_{\mathcal{X}_0}$ respectively, such that the expectation
\begin{align*}
&E_{X,Y}g(x_1,x_2,\dots,x_m,y_1,y_2,\dots,y_n)\\
&= E_{X,Y}\sup_{\substack{\gamma\in\text{Lip}_L(\mathcal{X}_0)\\ \norm{\gamma}_\infty\leq M_0}}\abs{E_{P_{\mathcal{X}_0}}[\gamma]-\frac{1}{m}\sum_{i=1}^m\gamma(T_0(x_i))-\left(E_{Q_{\mathcal{X}_0}}[f_\alpha^*(\gamma)]-\frac{1}{n}\sum_{i=1}^n f_\alpha^*\left(\gamma(T_0(y_i))\right)\right)}
\end{align*}
can be replaced by the equivalent quantity
\[
E_{X,Y}\sup_{\substack{\gamma\in\text{Lip}_L(\mathcal{X}_0)\\ \norm{\gamma}_\infty\leq M_0}}\abs{E_{P_{\mathcal{X}_0}}[\gamma]-\frac{1}{m}\sum_{i=1}^m\gamma(x_i)-\left(E_{Q_{\mathcal{X}_0}}[f_\alpha^*(\gamma)]-\frac{1}{n}\sum_{i=1}^n f_\alpha^*\left(\gamma(y_i)\right)\right)},
\]
where $X = \{x_1,x_2,\dots,x_m\}$ and $Y =\{y_1,y_2,\dots,y_n\}$ are are i.i.d. samples on $\mathcal{X}_0$ drawn from $P_{\mathcal{X}_0}$ and $Q_{\mathcal{X}_0}$ respectively. Then we have
\begin{align*}
&E_{X,Y}\sup_{\substack{\gamma\in\text{Lip}_L(\mathcal{X}_0)\\ \norm{\gamma}_\infty\leq M_0}}\abs{E_{P_{\mathcal{X}_0}}[\gamma]-\frac{1}{m}\sum_{i=1}^m\gamma(x_i)-\left(E_{Q_{\mathcal{X}_0}}[f_\alpha^*(\gamma)]-\frac{1}{n}\sum_{i=1}^n f_\alpha^*\left(\gamma(y_i)\right)\right)}\\
&= E_{X,Y}\sup_{\substack{\gamma\in\text{Lip}_L(\mathcal{X}_0)\\ \norm{\gamma}_\infty\leq M_0}}\abs{E_{X'}\left(\frac{1}{m}\sum_{i=1}^m\gamma(x'_i)\right)-\frac{1}{m}\sum_{i=1}^m\gamma(x_i)-E_{Y'}\left(\frac{1}{n}\sum_{i=1}^n f_\alpha^*\left(\gamma(y_i')\right)\right)+\frac{1}{n}\sum_{i=1}^nf_\alpha^*\left(\gamma(y_i)\right)}\\
&\leq E_{X,Y,X',Y'}\sup_{\substack{\gamma\in\text{Lip}_L(\mathcal{X}_0)\\ \norm{\gamma}_\infty\leq M_0}}\abs{\frac{1}{m}\sum_{i=1}^m\gamma(x'_i)-\frac{1}{m}\sum_{i=1}^m\gamma(x_i)-\frac{1}{n}\sum_{i=1}^n f_\alpha^*\left(\gamma(y_i')\right)+\frac{1}{n}\sum_{i=1}^n f_\alpha^*\left(\gamma(y_i)\right)}\\
&=E_{X,Y,X',Y',\xi,\xi'}\sup_{\substack{\gamma\in\text{Lip}_L(\mathcal{X}_0)\\ \norm{\gamma}_\infty\leq M_0}}\abs{\frac{1}{m}\sum_{i=1}^m\xi_i\left(\gamma(x'_i)-\gamma(x_i)\right)-\frac{1}{n}\sum_{i=1}^n\xi'_i\left(f_\alpha^*\left(\gamma(y_i')\right)-f_\alpha^*\left(\gamma(y_i)\right)\right)}\\
&\leq E_{X,X',\xi}\sup_{\substack{\gamma\in\text{Lip}_L(\mathcal{X}_0)\\ \norm{\gamma}_\infty\leq M_0}}\abs{\frac{1}{m}\sum_{i=1}^m\xi_i\left(\gamma(x'_i)-\gamma(x_i)\right)} + E_{Y,Y',\xi'}\sup_{\substack{\gamma\in\text{Lip}_L(\mathcal{X}_0)\\ \norm{\gamma}_\infty\leq M_0}}\abs{\frac{1}{n}\sum_{i=1}^n\xi'_i\left(f_\alpha^*\left(\gamma(y_i')\right)-f_\alpha^*\left(\gamma(y_i)\right)\right)}\\
&\leq \inf_{\alpha>0} 8\alpha+\frac{24}{\sqrt{m}}\int_{\alpha}^{M_0}\sqrt{\ln\mathcal{N}(\mathcal{F}_0,\delta,\norm{\cdot}_{\infty})}\diff{\delta} + \inf_{\alpha>0} 8\alpha+\frac{24}{\sqrt{n}}\int_{\alpha}^{M_1}\sqrt{\ln\mathcal{N}(\mathcal{F}_1,\delta,\norm{\cdot}_{\infty})}\diff{\delta},
\end{align*}

where $\mathcal{F}_0 = \left\{\gamma\in\text{Lip}_{L}(\mathcal{X}_0):\norm{\gamma}_\infty\leq M_0 \right\}$ and $\mathcal{F}_1 = \left\{\gamma\in\text{Lip}_{L'}(\mathcal{X}_0):\norm{\gamma}_\infty\leq M_1 \right\}$, since for any $\gamma\in\mathcal{F}_0$, $\norm{f_\alpha^*\circ\gamma}_\infty\leq M_1$ and $\abs{\frac{\diff}{\diff{y}}f_\alpha^*(y)}\leq(\alpha-1)^{\frac{1}{\alpha-1}}(M_0)^{\frac{1}{\alpha-1}}$ for $\abs{y}\leq M_0$ such that $f_\alpha^*\circ\gamma$ is $L'$-Lipschitz, where $M_1 = f_\alpha^*(M_0)$ and $L' = L(\alpha-1)^{\frac{1}{\alpha-1}}(M_0)^{\frac{1}{\alpha-1}}$. Then the rest of the proof follows from the proof of \cref{thm:gammaIPM}.
\end{proof}

\subsection{MMD}\label{appendix:MMD}
We assume the kernel $k(x,y)$ satisfies Assumption~\ref{assump:kernel}. Furthermore, let $\phi(x)$ be the evaluation functional at $x$ in $\mathcal{H}$: $\langle \phi(x),\phi(y)\rangle_\mathcal{H} = k(x,y),\forall x,y\in\mathcal{H}$.

\begin{theorem}\label{thm:mmd}
Let $\mathcal{X} = \Sigma\times\mathcal{X}_0$ and $\mathcal{H}$ be a RKHS on $\mathcal{X}$ whose kernel satisfies Assumption~\ref{assump:kernel}. Suppose $P$ and $Q$ are $\Sigma$-invariant distributions on $\mathcal{X}$. Then for $m,n$ sufficiently large and any $\epsilon>0$ we have with probability at least $1-\epsilon$,
\begin{align*}
    \abs{\text{MMD}(P,Q)-\text{MMD}^{\Sigma}(P_m,Q_n)}
    &< 2K^{\frac{1}{2}}\left[1+c(\abs{\Sigma}-1)\right]^{\frac{1}{2}}\left(\frac{1}{\sqrt{\abs{\Sigma}m}} + \frac{1}{\sqrt{\abs{\Sigma}n}}\right) \\
    &\quad+\sqrt{\frac{2K(1+c(\abs{\Sigma}-1))\ln(\frac{1}{\epsilon})}{\abs{\Sigma}}}\sqrt{\frac{1}{m}+\frac{1}{n}},
\end{align*}
where $K$ and $c$ are the constants in Assumption~\ref{assump:kernel}.
\end{theorem}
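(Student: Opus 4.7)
The plan is to exploit Result~\ref{thm:sp-gan-main-result} to rewrite the estimator as an MMD between symmetrized empirical measures, and then work entirely inside the RKHS via the mean embedding $\mu_P := E_P[\phi(X)]$. Since $P, Q$ are already $\Sigma$-invariant, Result~\ref{thm:sp-gan-main-result} gives $\text{MMD}(P,Q) = \text{MMD}^{\Sigma}(P,Q)$ and also $\text{MMD}^{\Sigma}(P_m, Q_n) = \text{MMD}(S^\Sigma[P_m], S^\Sigma[Q_n])$. By the reproducing property, $\text{MMD}(P, Q) = \norm{\mu_P - \mu_Q}_{\mathcal{H}}$, and the reverse triangle inequality yields the clean decomposition
\begin{equation*}
\abs{\text{MMD}(P,Q) - \text{MMD}^{\Sigma}(P_m, Q_n)} \leq \norm{\mu_P - \mu_{S^\Sigma[P_m]}}_{\mathcal{H}} + \norm{\mu_Q - \mu_{S^\Sigma[Q_n]}}_{\mathcal{H}}.
\end{equation*}
So the whole problem reduces to bounding $\norm{\mu_P - \mu_{S^\Sigma[P_m]}}_{\mathcal{H}}$ (and its $Q$ analogue) in expectation and with high probability. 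This is the Rademacher-type quantity whose supremum over the unit ball of $\mathcal{H}$ is attained at the explicit witness direction $(\mu_P - \mu_{S^\Sigma[P_m]})/\norm{\mu_P - \mu_{S^\Sigma[P_m]}}_{\mathcal{H}}$, which is why no covering numbers of $\mathcal{X}$ will enter and the final rate will be dimension-free.

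For the expectation, I will apply Jensen's inequality and compute $E\norm{\mu_P - \mu_{S^\Sigma[P_m]}}^2_{\mathcal{H}}$ directly. Writing $\mu_{S^\Sigma[P_m]} = \frac{1}{m\abs{\Sigma}} \sum_{i,\sigma} \phi(\sigma x_i)$ and using the fact that $E[\phi(\sigma x_i)] = \mu_P$ for every $\sigma$ by $\Sigma$-invariance of $P$, the cross terms ($i \ne j$) vanish by independence, leaving
\begin{equation*}
E\norm{\mu_P - \mu_{S^\Sigma[P_m]}}^2_{\mathcal{H}} \leq \frac{1}{m\abs{\Sigma}^2} \sum_{\sigma, \tau \in \Sigma} E_{x \sim P}[k(\sigma x, \tau x)].
\end{equation*}
The key step is to collapse the double sum using the kernel invariance in Assumption~\ref{assump:kernel}: $k(\sigma x, \tau x) = k(x, \sigma^{-1}\tau x)$, so reparametrizing $\rho := \sigma^{-1}\tau$ gives $\sum_{\sigma,\tau} E[k(\sigma x, \tau x)] = \abs{\Sigma} \sum_{\rho \in \Sigma} E_P[k(x, \rho x)]$. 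For $\rho = e$ this contributes $K$; for $\rho \ne e$, I write any $x \in \mathcal{X}$ as $x = \tau_x x_0$ with $x_0 = T_0(x) \in \mathcal{X}_0$ and use kernel invariance once more together with the third bullet of Assumption~\ref{assump:kernel} to get $k(x, \rho x) = k(x_0, \tau_x^{-1}\rho \tau_x x_0) \leq c_{\Sigma,k} K$. Summing yields $\sum_\rho E_P[k(x,\rho x)] \leq K[1 + c_{\Sigma,k}(\abs{\Sigma}-1)]$, and after dividing by $\abs{\Sigma}^2$ and taking the square root I recover the advertised factor $C_{\Sigma,k}/\sqrt{m}$.

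For the high-probability upgrade I will apply McDiarmid's inequality to the function $(x_1, \ldots, x_m, y_1, \ldots, y_n) \mapsto \text{MMD}(S^\Sigma[P_m], S^\Sigma[Q_n])$. Replacing a single $x_i$ changes $\mu_{S^\Sigma[P_m]}$ by $\frac{1}{m\abs{\Sigma}} \sum_\sigma [\phi(\sigma x_i') - \phi(\sigma x_i)]$; bounding its $\mathcal{H}$-norm by the triangle inequality and using the same $\sum_\rho k(x, \rho x) \leq K[1 + c_{\Sigma,k}(\abs{\Sigma}-1)]$ estimate gives a bounded-differences constant of order $\frac{1}{m} C_{\Sigma,k} \sqrt{K}$ per coordinate (analogously $\frac{1}{n} C_{\Sigma,k} \sqrt{K}$ for the $y_j$'s). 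Plugging these into McDiarmid's deviation inequality produces exactly the claimed $\sqrt{\tfrac{2K(1 + c_{\Sigma,k}(\abs{\Sigma}-1))\ln(1/\epsilon)}{\abs{\Sigma}}}\sqrt{\tfrac{1}{m} + \tfrac{1}{n}}$ fluctuation term, and combining with the mean bound from the previous paragraph gives the theorem.

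The main obstacle, and the step to execute carefully, is the passage $k(\sigma x, \tau x) \mapsto c_{\Sigma,k} K$ for generic $x \in \mathcal{X}$ when $\sigma \ne \tau$, because Assumption~\ref{assump:kernel} stipulates the decay only for $x \in \mathcal{X}_0$. The conjugation trick $\tau_x^{-1}\rho\tau_x$ together with the kernel invariance cleanly resolves this, but one has to verify that this conjugate is still a non-identity element of $\Sigma$ whenever $\rho \ne e$, so that Assumption~\ref{assump:kernel} applies; the computation of the bounded-differences constants in the McDiarmid step hinges on the same identity, so any imprecision there would propagate into both terms of the final bound.
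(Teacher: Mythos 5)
Your argument is correct, and it takes a genuinely different (and slightly tighter) route than the paper. The paper bounds the joint supremum $\sup_{\norm{\gamma}_{\mathcal H}\le 1}|E_P[\gamma]-E_Q[\gamma]-E_{S^\Sigma[P_m]}[\gamma]+E_{S^\Sigma[Q_n]}[\gamma]|$, then passes through a ghost-sample symmetrization into Rademacher averages, which are controlled by Lemma~\ref{lemma:mmd-Rademacher}; the symmetrization step is where the factor of $2$ in the leading term of the theorem comes from. You instead split $|\text{MMD}(P,Q)-\text{MMD}^\Sigma(P_m,Q_n)|\le\norm{\mu_P-\mu_{S^\Sigma[P_m]}}_{\mathcal H}+\norm{\mu_Q-\mu_{S^\Sigma[Q_n]}}_{\mathcal H}$ by the reverse triangle inequality and then compute $E\norm{\mu_P-\mu_{S^\Sigma[P_m]}}_{\mathcal H}^2$ exactly (up to dropping the negative $-\frac{1}{m}\norm{\mu_P}^2$ term), using $\Sigma$-invariance to kill the cross terms. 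With Jensen this yields the mean bound $\sqrt{K}\,C_{\Sigma,k}(\tfrac{1}{\sqrt m}+\tfrac{1}{\sqrt n})$, which is a factor of $2$ better than the $2\sqrt{K}\,C_{\Sigma,k}(\tfrac{1}{\sqrt m}+\tfrac{1}{\sqrt n})$ in the paper — your bound therefore implies the stated theorem with room to spare. The one genuine step your writeup makes explicit that the paper glosses over is the conjugation trick: Assumption~\ref{assump:kernel} only stipulates $k(\sigma x,x)\le c_{\Sigma,k}K$ for $x\in\mathcal{X}_0$, but both proofs need this for generic $x\in\mathcal X$; your observation that $k(x,\rho x)=k(x_0,(\tau_x^{-1}\rho\tau_x)x_0)$ with $\tau_x^{-1}\rho\tau_x\ne e$ whenever $\rho\ne e$ closes that gap cleanly, and the paper's line $\sum_{j\ne l}k(\sigma_j x_i,\sigma_l x_i)=\sum_{\sigma_j\ne\text{id}}k(\sigma_j x_i,x_i)$ (which drops a factor $|\Sigma|$ in the intermediate display, though the final $(\abs\Sigma^2-\abs\Sigma)cK$ is correct) implicitly uses the same identity. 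The McDiarmid step is essentially identical in both proofs; the bounded-differences constant you compute, $\tfrac{2\sqrt K C_{\Sigma,k}}{m}$ per $x$-coordinate, matches the paper's. One small caution: you apply McDiarmid to $\text{MMD}(S^\Sigma[P_m],S^\Sigma[Q_n])$ itself, whose expectation is \emph{not} $\text{MMD}(P,Q)$; you then need to pass through the intermediate $E[\text{MMD}^\Sigma(P_m,Q_n)]$ and bound $|\text{MMD}(P,Q)-E[\text{MMD}^\Sigma(P_m,Q_n)]|$ by your mean estimate, which is exactly what your decomposition supplies — so this is fine, but it is worth stating the triangle step explicitly.
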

Before proving the theorem, we provide the following lemma.
\begin{lemma}\label{lemma:mmd-Rademacher}
    Suppose the kernel in an RKHS satisfies Assumption~\ref{assump:kernel}, and $\xi=\{\xi_1,\dots,\xi_m\}$ is a set of independent random variables, each of which takes values on $\{-1,1\}$ with equal probabilities. Then we have
    \begin{align*}  E_{\xi}\sup_{\norm{\gamma}_{\mathcal{H}}\leq 1}\abs{\frac{1}{m\abs{\Sigma}}\sum_{i=1}^m\xi_i\sum_{j=1}^{\abs{\Sigma}}\gamma(\sigma_j x_i)}\leq\frac{\left(1+c(\abs{\Sigma}-1)\right)K^{\frac{1}{2}}}{\sqrt{\abs{\Sigma}m}}.
    \end{align*}
\end{lemma}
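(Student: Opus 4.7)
The plan is to exploit the reproducing property of $\mathcal{H}$ to convert the supremum over the unit ball into an RKHS norm, then apply Jensen's inequality to reduce everything to a second-moment computation where the kernel's $\Sigma$-invariance and its decay along orbits do the real work.

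First, by the reproducing property, for each $\gamma$ with $\|\gamma\|_{\mathcal{H}}\le 1$ we have $\gamma(\sigma_j x_i)=\langle\gamma,\phi(\sigma_j x_i)\rangle_{\mathcal{H}}$, so that
\[
\frac{1}{m|\Sigma|}\sum_{i=1}^m\xi_i\sum_{j=1}^{|\Sigma|}\gamma(\sigma_j x_i)=\langle\gamma,V_\xi\rangle_{\mathcal{H}},\qquad V_\xi:=\frac{1}{m|\Sigma|}\sum_{i=1}^m\xi_i\sum_{j=1}^{|\Sigma|}\phi(\sigma_j x_i).
\]
Taking the supremum over the unit ball identifies the left-hand side with $\|V_\xi\|_{\mathcal{H}}$, and Jensen's inequality yields $E_\xi\|V_\xi\|_{\mathcal{H}}\le\sqrt{E_\xi\|V_\xi\|_{\mathcal{H}}^2}$.

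Second, I would expand $\|V_\xi\|_{\mathcal{H}}^2$ as a double sum of kernel evaluations; the independence and mean-zero property of the Rademacher variables kills all cross terms with $i\ne i'$ under $E_\xi$, leaving
\[
E_\xi\|V_\xi\|_{\mathcal{H}}^2=\frac{1}{m^2|\Sigma|^2}\sum_{i=1}^m\sum_{j,j'=1}^{|\Sigma|}k(\sigma_j x_i,\sigma_{j'}x_i).
\]
The $\Sigma$-invariance of $k$ in \cref{assump:kernel} rewrites $k(\sigma_j x_i,\sigma_{j'}x_i)=k(x_i,\sigma_j^{-1}\sigma_{j'}x_i)$; since $\sigma_{j'}\mapsto\sigma_j^{-1}\sigma_{j'}$ is a bijection of $\Sigma$, the inner sum over $j'$ collapses to $\sum_{\sigma\in\Sigma}k(x_i,\sigma x_i)$, independently of $j$.

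Third, I need to upper-bound $k(x_i,\sigma x_i)$ for $\sigma\ne e$ and an \emph{arbitrary} $x_i\in\mathcal{X}$, whereas the decay hypothesis is only stated on the fundamental domain. Writing $x_i=\tau x_0$ with $\tau\in\Sigma$ and $x_0\in\mathcal{X}_0$, the kernel invariance gives $k(x_i,\sigma x_i)=k(x_0,\tau^{-1}\sigma\tau x_0)\le c_{\Sigma,k}K$, since $\tau^{-1}\sigma\tau\ne e$ whenever $\sigma\ne e$, so the third clause of \cref{assump:kernel} applies. Combined with $k(x_i,x_i)=K$ from the second clause, this yields $\sum_{\sigma\in\Sigma}k(x_i,\sigma x_i)\le K[1+c_{\Sigma,k}(|\Sigma|-1)]$, hence $E_\xi\|V_\xi\|_{\mathcal{H}}^2\le K[1+c_{\Sigma,k}(|\Sigma|-1)]/(m|\Sigma|)$, and taking the square root gives the stated bound.

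The main obstacle is the conjugation argument in the third step: the decay assumption is phrased only on $\mathcal{X}_0$, so one has to exploit both the kernel's $\Sigma$-invariance and the group-theoretic fact $\tau^{-1}\sigma\tau=e\iff\sigma=e$ to transfer the estimate to all of $\mathcal{X}$. Everything else — the use of the reproducing kernel, Jensen's inequality, and the decoupling of the Rademacher cross terms — is a routine second-moment computation.
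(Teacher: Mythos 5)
Your proof is correct and follows essentially the same route as the paper's: convert the supremum to an RKHS norm via the witness function, apply Jensen, kill the $i\ne i'$ cross terms by independence of the Rademacher signs, and bound the remaining diagonal kernel sums using Assumption~\ref{assump:kernel}. The one place you are more careful than the paper is the conjugation step $k(x_i,\sigma x_i)=k(x_0,\tau^{-1}\sigma\tau x_0)$ with $\tau^{-1}\sigma\tau\neq e$, which is needed because the decay hypothesis is stated only on $\mathcal{X}_0$; the paper uses the same bound but leaves this reduction implicit. Note also that you in fact obtain the sharper factor $\bigl(1+c(|\Sigma|-1)\bigr)^{1/2}$ (which the paper's own proof also yields and then uses in Theorem~\ref{thm:mmd}), while the lemma as stated only claims the weaker first-power bound.
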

\begin{proof}
Since the witness function to attain the supremum is explicit, we can write
    \begin{align*}     E_{\xi}\sup_{\norm{\gamma}_{\mathcal{H}}\leq 1}\abs{\frac{1}{m\abs{\Sigma}}\sum_{i=1}^m\xi_i\sum_{j=1}^{\abs{\Sigma}}\gamma(\sigma_jx_i)}&= E_{\xi}\norm{\frac{1}{m\abs{\Sigma}}\sum_{i=1}^m\xi_i\sum_{j=1}^{\abs{\Sigma}}\phi(\sigma_jx_i)}_{\mathcal{H}}\\
    &=\frac{1}{m\abs{\Sigma}}E_{\xi}\left[\sum_{i,i'=1}^m\xi_i\xi_{i'}\sum_{j,j'=1}^{\abs{\Sigma}}k(\sigma_jx_i,\sigma_{j'}x_{i'})]\right]^{\frac{1}{2}}\\
    &\leq\frac{1}{m\abs{\Sigma}}\left[E_{\xi}\sum_{i,i'=1}^m\xi_i\xi_{i'}\sum_{j,j'=1}^{\abs{\Sigma}}k(\sigma_jx_i,\sigma_{j'}x_{i'})]\right]^{\frac{1}{2}}\\
    &=\frac{1}{m\abs{\Sigma}}\left[E_{\xi}\sum_{i=1}^m(\xi_i)^2\sum_{j,j'=1}^{\abs{\Sigma}}k(\sigma_jx_i,\sigma_{j'}x_{i})]\right]^{\frac{1}{2}}\\
    &\leq\frac{1}{m\abs{\Sigma}}\left[m\cdot\left(\abs{\Sigma}K+c(\abs{\Sigma}^2-\abs{\Sigma})K\right)\right]^{\frac{1}{2}}\\
    &= \frac{K^{\frac{1}{2}}\left[1+c(\abs{\Sigma}-1)\right]^{\frac{1}{2}}}{\sqrt{\abs{\Sigma}m}}.
    \end{align*}
\end{proof}

\begin{proof}[Proof of \cref{thm:mmd}]
The proof below is a generalization of the proof of Theorem 7 in \cite{gretton2012kernel}, which does not need the notion of covering numbers due to the structure of RKHS.
\begin{align*}
    &\abs{\text{MMD}(P,Q)-\text{MMD}^{\Sigma}(P_m,Q_n)}\\
    &= \abs{\text{MMD}(P,Q)-\text{MMD}(S^{\Sigma}[P_m],S^{\Sigma}[Q_n])}\\
    &= \abs{\sup_{\norm{\gamma}_{\mathcal{H}}\leq 1}\{E_P[\gamma]-E_Q[\gamma]\} - \sup_{\norm{\gamma}_{\mathcal{H}}\leq 1}\{E_{S^{\Sigma}[P_m]}[\gamma]-E_{S^{\Sigma}[Q_n]}[\gamma]\}}\\
    &= \abs{\sup_{\norm{\gamma}_{\mathcal{H}}\leq 1}\{E_P[\gamma]-E_Q[\gamma]\} - \sup_{\norm{\gamma}_{\mathcal{H}}\leq 1}\{\frac{1}{m\abs{\Sigma}}\sum_{i=1}^m\sum_{j=1}^{\abs{\Sigma}}\gamma(\sigma_j x_i)-\frac{1}{n\abs{\Sigma}}\sum_{i=1}^n\sum_{j=1}^{\abs{\Sigma}}\gamma(\sigma_j y_i)\}}\\
    &\leq \sup_{\norm{\gamma}_{\mathcal{H}}\leq 1}\abs{E_P[\gamma]-E_Q[\gamma]-\frac{1}{m\abs{\Sigma}}\sum_{i=1}^m\sum_{j=1}^{\abs{\Sigma}}\gamma(\sigma_j x_i)+\frac{1}{n\abs{\Sigma}}\sum_{i=1}^n\sum_{j=1}^{\abs{\Sigma}}\gamma(\sigma_j y_i)}\\
    &:=f(x_1,x_2,\dots,x_m,y_1,y_2,\dots,y_n).
\end{align*}
Now we estimate the upper bound of the difference of $f$ if we change one of $x_i$'s.

\begin{align}\label{MMD:McDiarmid}
&\abs{f(x_1,\dots,x_i,\dots,y_1,\dots,y_n)-f(x_1,\dots,\tilde{x}_i,\dots,y_1,\dots,y_n)}\nonumber\\
&\leq \sup_{\norm{\gamma}_{\mathcal{H}}\leq 1}\abs{\frac{1}{m\abs{\Sigma}}\sum_{j=1}^{\abs{\Sigma}}\gamma(\sigma_j x_i) - \gamma(\sigma_j \tilde{x}_i)}\nonumber\\
&= \frac{1}{m\abs{\Sigma}} \norm{\sum_{j=1}^{\abs{\Sigma}}\phi(\sigma_j x_i) - \phi(\sigma_j \tilde{x}_i)}_{\mathcal{H}}\\
&\leq \frac{1}{m\abs{\Sigma}}\left(\norm{\sum_{j=1}^{\abs{\Sigma}}\phi(\sigma_j x_i) }_{\mathcal{H}} + \norm{\sum_{j=1}^{\abs{\Sigma}}\phi(\sigma_j \tilde{x}_i) }_{\mathcal{H}}\right).\nonumber
\end{align}

To bound $\norm{\sum_{j=1}^{\abs{\Sigma}}\phi(\sigma_j x_i) }_{\mathcal{H}}$, we have
\begin{align*}
\norm{\sum_{j=1}^{\abs{\Sigma}}\phi(\sigma_j x_i) }_{\mathcal{H}} &= \left[\sum_{j=1}^{\abs{\Sigma}}k(\sigma_j x_i,\sigma_j x_i) + \sum_{j\neq l}k(\sigma_j x_i,\sigma_l x_i)\right]^{\frac{1}{2}}\\
&= \left[\sum_{j=1}^{\abs{\Sigma}}k(\sigma_j x_i,\sigma_j x_i) + \sum_{\sigma_j\neq id}k(\sigma_j x_i, x_i)\right]^{\frac{1}{2}}\\
&\leq \left[\abs{\Sigma}\cdot K+\left(\abs{\Sigma}^2-\abs{\Sigma}\right)\cdot cK\right]^{\frac{1}{2}}.
\end{align*}
The upper bound of the difference of $f$ if we change one of $y_i$'s can be derived in the same way. To apply the McDiarmid’s inequality, the denominator in the exponent is thus
\begin{align*}
    &m\cdot\frac{4\left[\abs{\Sigma}\cdot K+\left(\abs{\Sigma}^2-\abs{\Sigma}\right)\cdot cK\right]}{m^2\abs{\Sigma}^2}+n\cdot\frac{4\left[\abs{\Sigma}\cdot K+\left(\abs{\Sigma}^2-\abs{\Sigma}\right)\cdot cK\right]}{n^2\abs{\Sigma}^2}\\
    &\leq 4K(\frac{1}{m}+\frac{1}{n})\cdot\frac{1+c(\abs{\Sigma}-1)}{\abs{\Sigma}}.
\end{align*}
Moreover, we can extend inequality (16) in \cite{gretton2012kernel} to take into account the group invariance. Denoting by $X' = \{x'_1,x'_2,\dots,x'_m\}$ and $Y' = \{y'_1,y'_2,\dots,y'_n\}$ the i.i.d. samples drawn from $P$ and $Q$, and $\xi=\{\xi_1,\dots,\xi_m\}$, $\xi'=\{\xi'_1,\dots,\xi'_n\}$ sets of independent random variables, each of which takes values on $\{-1,1\}$ with equal probabilities, we have
\begin{align*}
    &E_{X,Y}f(x_1,x_2,\dots,x_m,y_1,y_2,\dots,y_n)\\
    &=E_{X,Y}\sup_{\norm{\gamma}_{\mathcal{H}}\leq 1}\abs{E_P[\gamma]-E_Q[\gamma]-\frac{1}{m\abs{\Sigma}}\sum_{i=1}^m\sum_{j=1}^{\abs{\Sigma}}\gamma(\sigma_j x_i)+\frac{1}{n\abs{\Sigma}}\sum_{i=1}^n\sum_{j=1}^{\abs{\Sigma}}\gamma(\sigma_j y_i)}\\
    &=E_{X,Y}\sup_{\norm{\gamma}_{\mathcal{H}}\leq 1}\left|E_{X'}\left(\frac{1}{m\abs{\Sigma}}\sum_{i=1}^m\sum_{j=1}^{\abs{\Sigma}}\gamma(\sigma_j x'_i)\right)-E_{Y'}\left(\frac{1}{n\abs{\Sigma}}\sum_{i=1}^n\sum_{j=1}^{\abs{\Sigma}}\gamma(\sigma_j y'_i)\right)\right.\\
    & \quad\quad\quad\quad\quad\quad\quad\quad\quad\quad\quad\quad\quad\quad\quad\quad\quad\quad\quad\quad \left.-\frac{1}{m\abs{\Sigma}}\sum_{i=1}^m\sum_{j=1}^{\abs{\Sigma}}\gamma(\sigma_j x_i)+\frac{1}{n\abs{\Sigma}}\sum_{i=1}^n\sum_{j=1}^{\abs{\Sigma}}\gamma(\sigma_j y_i)\right|\\
    &\leq E_{X,Y,X',Y'}\sup_{\norm{\gamma}_{\mathcal{H}}\leq 1}\abs{\frac{1}{m\abs{\Sigma}}\sum_{i=1}^m\sum_{j=1}^{\abs{\Sigma}}\left(\gamma(\sigma_j x'_i)-\gamma(\sigma_j x_i)\right)-\frac{1}{n\abs{\Sigma}}\sum_{i=1}^n\sum_{j=1}^{\abs{\Sigma}}\left(\gamma(\sigma_j y'_i)-\gamma(\sigma_j y_i)\right)}\\
    &=E_{X,Y,X',Y',\xi,\xi'}\sup_{\norm{\gamma}_{\mathcal{H}}\leq 1}\abs{\frac{1}{m\abs{\Sigma}}\sum_{i=1}^m\xi_i\sum_{j=1}^{\abs{\Sigma}}\left(\gamma(\sigma_j x'_i)-\gamma(\sigma_j x_i)\right)-\frac{1}{n\abs{\Sigma}}\sum_{i=1}^n\xi'_i\sum_{j=1}^{\abs{\Sigma}}\left(\gamma(\sigma_j y'_i)-\gamma(\sigma_j y_i)\right)}\\
    &\leq E_{X,X',\xi}\sup_{\norm{\gamma}_{\mathcal{H}}\leq 1}\abs{\frac{1}{m\abs{\Sigma}}\sum_{i=1}^m\xi_i\sum_{j=1}^{\abs{\Sigma}}\left(\gamma(\sigma_j x'_i)-\gamma(\sigma_j x_i)\right)}+E_{Y,Y',\xi'}\sup_{\norm{\gamma}_{\mathcal{H}}\leq 1}\abs{\frac{1}{n\abs{\Sigma}}\sum_{i=1}^n\xi'_i\sum_{j=1}^{\abs{\Sigma}}\left(\gamma(\sigma_j y'_i)-\gamma(\sigma_j y_i)\right)}\\
    &\leq 2\left[\frac{K^{\frac{1}{2}}\left[1+c(\abs{\Sigma}-1)\right]^{\frac{1}{2}}}{\sqrt{\abs{\Sigma}m}} + \frac{K^{\frac{1}{2}}\left[1+c(\abs{\Sigma}-1)\right]^{\frac{1}{2}}}{\sqrt{\abs{\Sigma}n}}\right],
\end{align*}
where the last inequality is due to Lemma \ref{lemma:mmd-Rademacher}. Therefore, by the McDiarmid’s theorem, we have
\begin{align*}
    &\mathbb{P}\left(\abs{\text{MMD}(P,Q)-\text{MMD}^{\Sigma}(P_m,Q_n)} - 2K^{\frac{1}{2}}\left[1+c(\abs{\Sigma}-1)\right]^{\frac{1}{2}}\left(\frac{1}{\sqrt{\abs{\Sigma}m}} + \frac{1}{\sqrt{\abs{\Sigma}n}}\right)>\epsilon\right)\\
    &\qquad\leq\exp\left(-\frac{\epsilon^2mn\abs{\Sigma}}{2K(m+n)(1+c(\abs{\Sigma}-1))}\right).
\end{align*}
By a change of variable, we have with probability at least $1-\epsilon$, 
\begin{align*}
    \abs{\text{MMD}(P,Q)-\text{MMD}^{\Sigma}(P_m,Q_n)}
    &< 2K^{\frac{1}{2}}\left[1+c(\abs{\Sigma}-1)\right]^{\frac{1}{2}}\left(\frac{1}{\sqrt{\abs{\Sigma}m}} + \frac{1}{\sqrt{\abs{\Sigma}n}}\right) \\
    &\quad+\sqrt{\frac{2K(1+c(\abs{\Sigma}-1))\ln(\frac{1}{\epsilon})}{\abs{\Sigma}}}\sqrt{\frac{1}{m}+\frac{1}{n}}.
\end{align*}
\end{proof}

\section*{Acknowledgements}
The research of M.K. and L.R.-B. was partially supported by  the Air Force Office of Scientific Research (AFOSR) under the grant FA9550-21-1-0354.
The research of M. K. and L.R.-B. was partially supported by the National Science Foundation (NSF) under the grants DMS-2008970 and TRIPODS CISE-1934846. The research of Z.C and W.Z. was  partially supported by NSF under DMS-2052525 and DMS-2140982. We thank Yulong Lu for the insightful discussions.

\bibliographystyle{siam}
\bibliography{mybibfile.bib}

\end{document}